\newcommand{\rev}[1]{#1}
\providecommand{\abs}[1]{\lvert#1\rvert}
\providecommand{\bigabs}[1]{\bigl\lvert#1\bigr\rvert}
\providecommand{\biggabs}[1]{\biggl\lvert#1\biggr\rvert}
\providecommand{\norm}[1]{\lVert#1\rVert}
\providecommand{\floor}[1]{\lfloor#1\rfloor}
\providecommand{\ceil}[1]{\lceil#1\rceil}
\newtheorem{theorem}{Theorem}
\newtheorem{lemma}[theorem]{Lemma}
\newtheorem{proposition}[theorem]{Proposition}
\theoremstyle{definition}
\theoremstyle{remark}
\newcommand{\MAT}{Mat\'{e}rn }
\newcommand{\cN}{{\mathcal{N}}}
\newcommand{\Chi}{\raise .3ex
\hbox{\large $\chi$}}
\newcommand{\R}{\mathbb{R}}
\newcommand{\N}{\mathbb{N}}
\newcommand{\Z}{\mathbb{Z}}
\newcommand{\C}{\mathbb{C}}
\newcommand\supp{\mathop{\rm supp}}
\newcommand{\ext}{{\mathrm{ext}}}
\newcommand\dx{\mathrm{d}}
\newcommand{\Sigmaext}{\Sigma^{\mathrm{ext}}}
\newcommand{\Qext}{Q^{\mathrm{ext}}}
\newcommand{\rd}{\mathrm d} 
\newcommand{\ri}{\mathrm i} 
\newcommand{\bsx}{x}
\newcommand{\bsY}{Y}
\newcommand{\bsZ}{V}
\newcommand{\bk}{k}
\newcommand{\bsk}{k}
\newcommand{\len}{{\gamma}}
\newcommand{\bbZ}{\mathbb{Z}}
\newcommand{\Rext}{\Sigmaext}
\newcommand{\bbR}{\mathbb{R}}
\newcommand{\rhoext}{\rho^{\mathrm{ext}}}
\newcommand{\Lambdaext}{\Lambda^{\mathrm{ext}}}
\newcommand{\bsV}{V}
\newcommand{\oZ}{\overline{\mathbb{Z}}}
\newcommand{\bn}{n}
\newcommand{\bx}{x}
\newcommand{\bm}{m}
\newcommand{\cR}{\mathcal{R}}
\newcommand{\cRext}{{\cR}^{\rm ext}}
\newcommand{\bbZbar}{\overline{\mathbb{Z}}}
\newcommand{\bsomega}{\omega}
\newcommand{\bsxi}{\xi}
\newcommand{\bsalpha}{\alpha}
\newcommand{\bsbeta}{\beta}
\numberwithin{equation}{section}
\title{Unified Analysis of Periodization-Based Sampling Methods for Mat\'ern Covariances}
\author{Markus Bachmayr$^1$}
\address{\rm $^1$ Institut f\"ur Mathematik, Johannes Gutenberg-Universit\"at Mainz, Staudingerweg 9, 55128 Mainz, Germany}
\email[Markus Bachmayr]{bachmayr@uni-mainz.de}
\author{Ivan G.  Graham$^2$}
\address{\rm $^2$ Department of Mathematical Sciences, University of Bath, Bath BA2 7AY, United Kingdom}
\email{i.g.graham@bath.ac.uk}
\author{Van Kien Nguyen$^3$}
\address{\rm $^3$ Department of Mathematics, University of Transport and Communications, No.3 Cau Giay Street, Lang
      Thuong Ward, Dong Da District, Hanoi, Vietnam}
\email{kiennv@utc.edu.vn}
\author{Robert Scheichl$^4$}
\address{\rm $^4$ Institute for Applied Mathematics and Interdisciplinary Center for Scientific Computing (IWR), Ruprecht-Karls-Universit\"at Heidelberg, Im Neuenheimer Feld 205, 69120 Heidelberg, Germany}
\email{r.scheichl@uni-heidelberg.de}
\thanks{M.B.\ and V.K.N.\ acknowledge support by the Hausdorff Center of Mathematics, University of Bonn. V.K.N.\ would like to thank Vietnam Institute for Advanced Study in Mathematics for hospitality during his visit in summer 2018 when part of the work on this paper was undertaken. All authors thank the Johann Radon Institute for Computational and Applied Mathematics, Linz, for support to attend the Workshop on ``Frontier Technologies for High-Dimensional Problems and Uncertainty Quantification'', December 2018, during which part of this work was carried out. I.G.G.\ and R.S.\ thank Dirk Nuyens (Leuven) for useful discussions}
\date{\today}
\begin{document}

\begin{abstract}
The periodization of a stationary Gaussian random field on a sufficiently large torus comprising the spatial domain of interest is the basis of various efficient computational methods, such as the classical circulant embedding technique using the fast Fourier transform for generating samples on uniform grids.
For the family of Mat\'ern covariances with smoothness index  $\nu$ and correlation length  $\lambda$, we analyse the
  nonsmooth periodization (corresponding to classical circulant embedding)  and an alternative procedure using   a smooth  truncation of the covariance function. We solve two open problems: the first concerning the $\nu$-dependent asymptotic decay of eigenvalues of the resulting circulant in the nonsmooth case, the second concerning the required size in terms of $\nu$, $\lambda$ of the torus when
using a smooth periodization. In doing this we arrive at a complete characterisation of the performance of
these two approaches. Both our theoretical estimates and the  numerical tests provided here show substantial advantages of smooth truncation. 
\smallskip

\noindent \emph{Keywords.}  stationary Gaussian random fields, circulant embedding, periodization, Mat\'ern covariances
\smallskip

\noindent \emph{Mathematics Subject Classification.} {60G15, 60G60, 42B05, 65T40}
\end{abstract}

\maketitle

\section{Introduction}

The simulation of Gaussian random fields with specified  covariance is a fundamental  task in computational statistics with a wide
  range of applications -- see, for example, \cite{DN97,WC94,GSPSJ06}. Since these  fields  provide natural tools  for spatial
  modeling  under 
uncertainty, they  have
played a   fundamental r\^{o}le in  the modern  field of uncertainty quantification (UQ).
Thus the construction and analysis of  efficient methods for sampling such fields is of widespread  interest.

In this paper we consider  a  class of algorithms for sampling stationary fields 
  based on (artificial)  periodization of the field,  and  fast Fourier transform. These algorithms 
  enjoy log-linear complexity  in  the number   of \rev{spatial points sampled}. While \rev{these algorithms are well-known}  in computational statistics e.g., \cite{DN97,WC94} and are widely applied in UQ  \cite{GKNSS10,LPS,GKNSS,BCM, BCDM},
  so far they have been subjected to  relatively little rigorous numerical analysis.   This paper provides a
  detailed novel analysis of these algorithms  in the case of the important family of
  \MAT covariances. \rev{In particular we present new results} related to their \rev{efficiency} and \rev{to the preservation of spectral properties in the periodization}.  

\rev{To give more context, we shall be concerned with}  Gaussian random fields 
 $Z= Z(\bsx, \omega)$ which are assumed \rev{to be} stationary, \rev{that is},
 \begin{align}
   \label{eq:covar}\mathbb{E}[(Z(\bsx, \cdot) - \overline{Z}(\bsx)) (Z(\bsx',\cdot) - \overline{Z}(\bsx'))] =: \rho(\bsx - \bsx')
 , \quad \bsx, \bsx' \in D, \end{align} 
  where  $\overline{Z}(\bsx) = \mathbb{E}[Z(\bsx, \cdot)]$ and 
 where  $ D$ is a bounded spatial domain in $d$ dimensions. Given $n$ sampling points
 $\{ \bsx_j\}_{j=1}^n$, our task is  to sample    the multivariate Gaussian
 $\bsZ : = \{ Z(\bsx_i)\}_{i=1}^n \in  \mathcal{N}(0, \Sigma)$, where
 \begin{align} \label{covmatrix} \Sigma_{i,j}  = \rho(\bsx_i -  \bsx_j) , \quad i,j = 1, \ldots, n. \end{align}
 Since $\Sigma$ is symmetric positive semidefinite, this can in principle be done by  performing a factorisation \begin{align} \label{fact} \Sigma = F F^\top, \end{align}
 with $F = \Sigma^{1/2}$ being one possible choice,  from which the desired samples are provided by the product $F\bsY$   where  $ \bsY \in \mathcal{N}(0,I)$.
 When $n$ is large, a naive direct factorisation is prohibitively expensive.
 A variety of methods for drawing samples have been developed that either perform approximate sampling, for
 instance by using a less costly inexact factorisation (e.g., \cite{HPS,FKS}), or maintain exactness of the distribution by using an exact factorisation that exploits additional structure in $\Sigma$. The methods considered in this work follow the latter approach by embedding into a problem with \emph{periodic} structure.

\rev{The methods we consider}
are based on the observation that in many applications the sampling points can
  be placed uniformly on a rectangular grid, as justified theoretically, \rev{for instance}, in \cite{GKNSS18}. Then the $n \times n$ symmetric
  positive semidefinite matrix $\Sigma$ is block Toeplitz with
  Toeplitz blocks \rev{due to \eqref{covmatrix}}. Via  periodization, 
  $\Sigma$ \rev{is}  embedded   into an $m\times m$  block  circulant matrix $\Sigmaext$ with circulant blocks
  (\rev{with  $m > n$, but $m$ typically proportional to $n$}),
  \rev{which is}  diagonalized using  FFT (with log-linear complexity)
\rev{to provide} the spectral decomposition 
\begin{equation}
\label{spectral_decomp}
\rev{\Sigmaext = \Qext \Lambdaext (\Qext)^\top,}
\end{equation}
with  $\Lambdaext$ diagonal and  containing the eigenvalues of $\Sigmaext$ \rev{and $\Qext$ being a Fourier matrix}. Provided these eigenvalues are non-negative, the required $F$ in \eqref{fact} can be computed by taking  $n$ appropriate rows of the square root
  of $\Sigmaext$.  Then samples of the grid values of $Z$  can be drawn by 
  \rev{first drawing a random vector $(y_j)_{j=1,\ldots,m}$ with $y_j\sim \cN(0,1)$ i.i.d.,}
  then computing
  \begin{align}\label{samples}
    \bsZ^\text{ext}  =  \sum_{j=1}^m y_j \sqrt{\Lambdaext_j} \, q_j
  \end{align}
  \rev{using the FFT,} with   $q_j$   the columns of   $\Qext$ 
  and $\Lambdaext_j$  the  corresponding eigenvalues of
  $\Sigmaext$. 
  Note that \eqref{samples} is the Karhunen--Lo\`{e}ve (KL) expansion of the random vector $\bsZ^\text{ext}$.

\rev{Finally, a sample of $\bsZ$ is obtained by extracting from
  $\bsZ^\text{ext}$ the entries corresponding to the original grid
  points. Proceeding in this manner yields exact samples of $Z$ at the
  grid points, provided that $\Sigmaext$ is positive
  semidefinite. \rev{Positive definiteness} can be verified \emph{a
    posteriori} by checking non-negativity of the computed entries of
  $\Lambdaext$ \rev{and} is guaranteed to be satisfied for
  sufficiently large $m$ under fairly general conditions.} \rev{
  For instance, it was shown in \cite[Thm.~2.3]{GKNSS} that this holds true under weak integrability and regularity assumptions on $\rho$.} 

\rev{It thus suffices to iteratively increase $m$ until $\Sigmaext$ is positive semidefinite to obtain a reliable sampling method.
This paper is concerned with the following two key questions
concerning this procedure.}

 \begin{itemize}
   \item[I] \rev{ How large does the extended size $m$ need to be, compared to the cardinality $n$ of the original grid?
    This completely determines the \emph{efficiency} of the sampling scheme.
    The required $m$ depends both on the covariance function $\rho$ and on the type of periodization.}\smallskip
   \item[II] \rev{Do the eigenvalues} $\{\Lambdaext_j: j = 1, \ldots,
     m\}$ maintain a \rev{consistent} rate of decay as $n$ (and hence $m$)
   increases? \rev{This determines the efficiency of numerical methods that build on the decomposition \eqref{samples}, since faster decay of the eigenvalues reduces the number of independent variables which are effectively needed to describe $Z$. 
}
  \end{itemize} 
The answer to  Question II is particularly relevant in several areas
of UQ. For example,  in the analysis of  Quasi-Monte Carlo integration
methods, the rate of decay of the terms in the discrete KL expansion
\eqref{samples} plays a key r\^{o}le in the \rev{convergence theory of QMC
for high-dimensional problems \cite{GKNSS18}}. %
\rev{The rate of decay of the $\Lambdaext_j$ (as $m$ increases) is intricately linked to
  the rate of decay of the exact KL
     eigenvalues $\lambda_j$  of the
     covariance operator with kernel $\rho$ defined on $L^2(D)$, which appear in the KL expansion of the continuous field $Z$,
     \begin{equation}\label{standardkl}
   Z = \sum_{j = 1}^\infty y_j \sqrt{\lambda_j} \varphi_j, \quad y_j \sim \cN(0,1)\;\text{ i.i.d.,}
 \end{equation}
 where $\varphi_j$ are the $L^2(D)$-orthonormal eigenfunctions of the covariance operator.
 Thus, this is a question of how well the properties of the spectrum of the original covariance operator are preserved by the periodization.
}  

\rev{While partial answers to Questions I and II have appeared
  elsewhere \cite{BCM,GKNSS}, this paper gives a full answer to both
  questions in the case where the covariance is of \MAT type,  and in
  the context of two different methods of periodization, both in use
  in practice.} 
\rev{Our results provide a complete quantitative characterization in terms of the 
 parameters of the \MAT covariances. Although we focus on this class of covariances to avoid further complication of the already substantial technical difficulties, the techniques based on suitable cutoff functions that we use in our main results may be more generally applicable. In our concrete arguments, the combination of the exponential decay of $\rho$ and the algebraic decay of its Fourier transform $\hat\rho$ towards infinity, which holds in particular for \MAT covariances, plays an important r\^ole.}

Before describing our main results, we briefly describe the periodization in the case
  of physical dimension $d=1$; higher dimensions are entirely analogous.
Since the sampling domain is bounded, without loss of generality we   assume  that it is contained in $ [-1/2,1/2]$,  and so the covariance $\rho$ is only evaluated  on the domain  $[-1,1]$. For any
  $\gamma \geq  1$ we construct a $2\gamma$-periodic extension of $\rho$ as follows: First choose a cut-off function
  $\varphi$ with the property that
  \begin{align} \label{cutoff}
    \varphi = 1 \quad \text{on} \quad [-1,1],  \quad \text{and} \quad \varphi = 0 \quad \text{on} \quad \mathbb{R} \backslash [-\kappa, \kappa], \quad \text{where} \quad \kappa:= 2 \gamma -1 \geq \gamma.  \end{align}
Then we define $\rhoext$ on $\mathbb{R}$ as the infinite sum of shifts of $\rho \varphi$: 
\begin{align}\label{period}
   \rho^\ext(x) = \sum_{n \in \Z} \bigl( \rho \varphi \bigr)(x + 2\gamma n),  \quad x \in \R\,.
 \end{align}
 Clearly, $\rhoext$ is $2 \gamma$-periodic. Moreover, when  $x \in [-1,1]$ and $0 \not = n \in \mathbb{Z}$, 
 we have $x + 2 n \gamma \in \mathbb{R} \backslash (-\kappa , \kappa)$,  and  \eqref{period} collapses to a single  term, yielding 
 $\rhoext = \rho $ on $[-1,1]$.  We shall discuss in detail two examples corresponding to different choices of $\varphi$:
 \begin{align}
   \label{ex1}   \text{\emph{classical  periodization:}}&  \quad  \varphi = \Chi_{[-\gamma, \gamma)} \,,\  \text{the characteristic function of $[-\gamma, \gamma)$;} \\
   \label{ex2}   \text{\emph{smooth  periodization:}}&  \quad  \gamma > 1 \ \text{and} \  \varphi \in C^\infty_0(\mathbb{R}),    \ \text{with  }  \
                                    \supp(\varphi) = [-\kappa, \kappa] .
 \end{align}
 In \eqref{ex1}, the
 periodization is obtained by simply repeating the function $\rho|_{[-\gamma, \gamma]}$
 periodically. It is \rev{easy} to implement but has the
 disadvantage that artificial non-smoothness is introduced at the points  $\{2n \gamma: 0 \not = n \in \mathbb{Z}\} $.   By contrast, in the smooth periodization \eqref{ex2},  the function $\rho \varphi$ has the same smoothness properties on $\mathbb{R}$ as $\rho$ but    is  supported on $[-\kappa, \kappa]$.   \rev{An illustration of \eqref{period} for the choices \eqref{ex1} and \eqref{ex2} is given in Figure \ref{fig:periodization}.}
\rev{\begin{figure}
\begin{tabular}{cc}
(a) & (b) \\[-1pt]
\includegraphics[height=4cm]{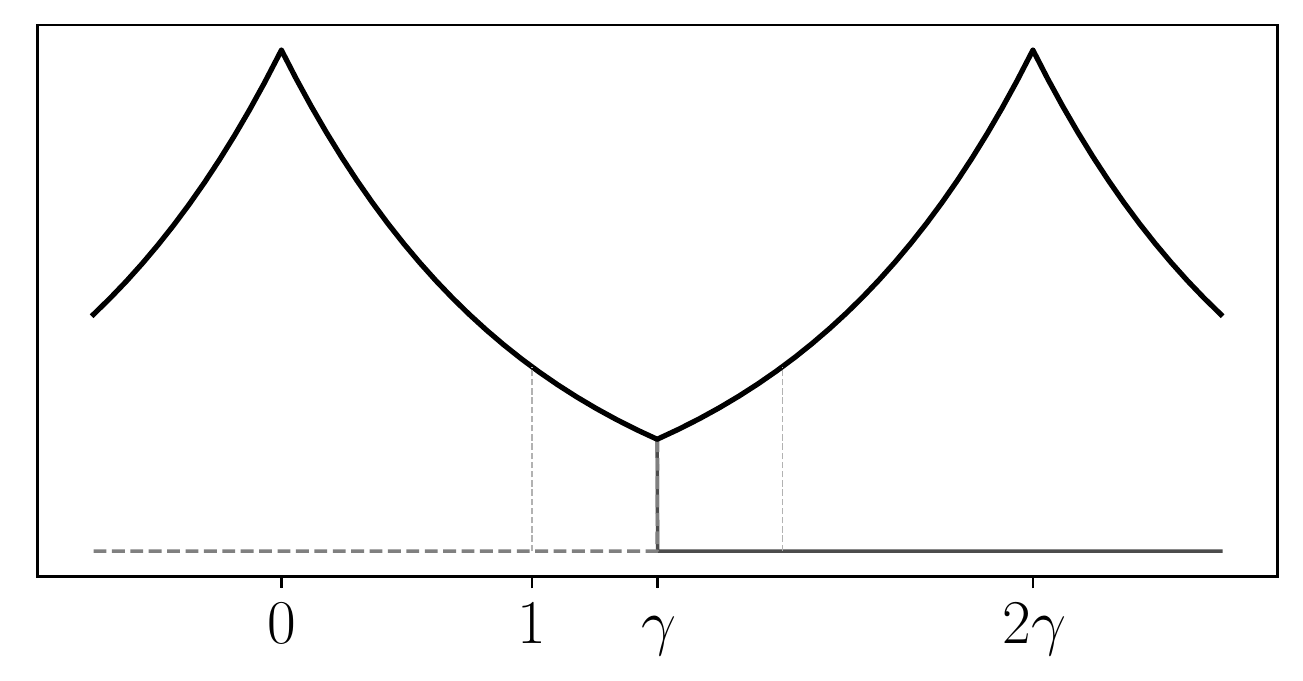} & \includegraphics[height=4cm]{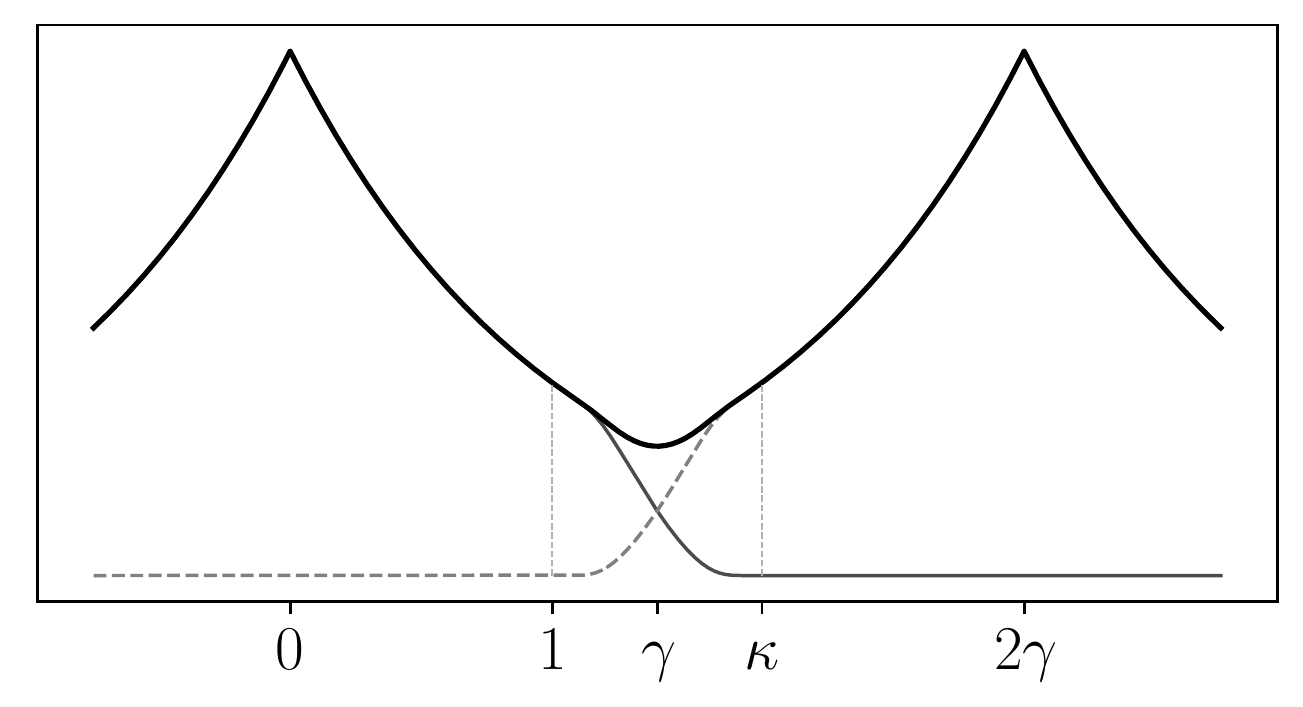}
\end{tabular}
\caption{Illustration of the two considered types of periodization for $d=1$ as in \eqref{period}, for the choices \eqref{ex1} in (a) and \eqref{ex2} in (b). The black graphs in each case correspond to $\rhoext$, shown for $\rho(x) = \exp(-\abs{x})$.}
\label{fig:periodization}
\end{figure}}

\rev{Returning to domains of general dimension $d$, %
  we first highlight our results concerning  Question I.}  In the case of   smooth periodization, 
it  was previously  shown in \cite[Thm.~2.3]{BCM}  for Mat\'ern
and various related \rev{(e.g., anisotropic Mat\'ern)} covariances that by
taking   $\gamma$  sufficiently large, one can always obtain a positive definite periodized covariance function, and hence a \emph{grid-independent periodic random field}. However, the  required size of $\gamma$ was not quantified \rev{in \cite{BCM}}.
In our first main result -- Theorem \ref{thm:smoothcond} below -- we show  that for Mat\'ern
covariances, %
for \emph{any} \rev{Mat\'ern smoothness parameter $\nu>0$ and correlation length $\lambda >0$,} it suffices to take
\begin{equation}\label{eq:condnew}
  \gamma \geq C \bigl( 1 + \lambda \max\{  \sqrt{\nu} (1 + \abs{\log \nu}), 1/\sqrt{\nu}\}  \bigr),
\end{equation}
\rev{with a constant $C>0$ that is independent of $\nu$ and $\lambda$.} \rev{Due to the existence of the periodic random 
field on the continuous level, for sampling on a discrete grid the result applies to any grid size $h$, defined as the (uniform) spacing between the sampling points $\bsx_i$.} 

\rev{This should be compared with the corresponding result for classical  periodization \cite{GKNSS}. There,
assuming in addition $\nu\geq \frac12$,}
a sufficient condition for positive definiteness of the form
\begin{equation}\label{eq:condgknss}
  \gamma \geq C \bigl( 1+\lambda \max\{ \sqrt{\nu} (1+ \abs{\log \nu}),  \sqrt{\nu}  \,|\!\log(h/\lambda)|\}  \bigr)
\end{equation}
\rev{was obtained} \rev{(again with $C$ independent of} \rev{$\nu$ and $\lambda$, as well as $h$)}. 
This bound was  seen to be essentially sharp in numerical experiments in \cite{GKNSS}; in particular, the required $\gamma$
indeed diverges as $h\to 0$, and one thus obtains, for any fixed $h>0$, a positive definite covariance matrix on a finite grid, but no underlying periodic random field.

\rev{For smooth periodization, we thus obtain in \eqref{eq:condnew}} essentially the same qualitative behaviour with respect to $\lambda$, $\nu$ as in \eqref{eq:condgknss},
but without the dependence on the grid size $h$, and including the regime $\nu \in (0, 1/2)$. 
\rev{The absence in \eqref{eq:condnew} of the logarithmic term
    in $h$ that was present in \eqref{eq:condgknss} leads to a
    substantial reduction of the computational complexity of the
    resulting sampling scheme. Moreover, the periodic extension of the original random field $Z$ on the continuous level can be used to obtain computationally attractive series expansions of $Z$ that provide alternatives to the classical KL expansion. These further conclusions are explained in detail in Section \ref{sec:conclusions}.}

The condition \eqref{eq:condnew} for all $\nu>0$ can also be regarded as a generalisation of the previous \rev{works} \cite{S02,GSPSJ06,MS18}, where periodization was considered with smooth truncations specifically designed for particular types of covariances, including the Mat\'ern case, but with the corresponding analysis limited to certain ranges of $\nu < 2$. Methods using smooth cutoff functions similar to the ones analysed here have also been tested computationally in \cite{HPA14,HKP}.

Our second main result concerns \rev{the answer to Question II in the case of non-smooth periodization}.  
 In the \MAT case it is well-known (see, e.g. \cite[Corollary 5]{GKNSSS},  \cite[eq.(64)]{BCM}) that the exact KL eigenvalues $\lambda_j$ of $Z$ \rev{in $L^2(D)$} decay with the rate
\begin{align}
  \label{kldecay} \lambda_j \ \leq \ C j^{-(1+ 2 \nu/d)}.  
\end{align} 
 Due to the preservation of covariance regularity  enjoyed by the smooth periodization,
 the ordered eigenvalues of $\Sigmaext$ in the smooth periodization case decay at the same
 optimal rate as in \eqref{kldecay}, see \cite[eq.\ (64)]{BCM}.     However, in the case of classical periodization, the associated  loss of regularity means that  the decay rate is not obvious.
 Supported by numerical evidence, it was conjectured in \cite{GKNSS} that under the
 condition \eqref{eq:condgknss}, the eigenvalues of $\Sigmaext$ 
 also exhibit, uniformly in $h$, the same asymptotic decay rate
 \eqref{kldecay} \rev{in the classical case}. 
 In Theorem \ref{thm:conjecture},  we prove this conjecture,
 up to a minor modification by a multiplicative factor of order $\mathcal{O}(|\!\log h|^\nu)$.

In summary, this paper provides  a complete characterisation of the performance of the two types of periodization in the case of Mat\'ern covariances. Both lead to optimal decay of covariance matrix eigenvalues, whereas the required size parameter $\gamma$ of the periodization cell is substantially more favorable in the smooth periodization case.

The outline of the paper is as follows: in Section \ref{sec:prelim}, we introduce some notions and basic results that are relevant to both the classical and smooth periodizations; in Section \ref{sec:ce}, we prove the conjecture
from \cite{GKNSS} (slightly modified) on the rate of eigenvalue decay for $\Sigmaext$ in the classical case; in Section \ref{sec:smooth}, we establish the quantitative condition \eqref{eq:condnew} for a smooth truncation; and in Section \ref{sec:num}, we illustrate our findings by numerical tests. \rev{In Section \ref{sec:conclusions}, we conclude with a discussion of the computational implications of our findings and of further applications.}

We use the following notational conventions: $\abs{\bx}$ is the Euclidean norm of $\bx\in \R^d$; $B_r(\bx)$ is the Euclidean ball of radius $r>0$ with center $\bx$. We use $C>0$ as a generic constant whose definition can change whenever it is used in a new inequality.

\section{Preliminaries}\label{sec:prelim}
\subsection{Fourier transforms} For a suitably regular function $v:\R^d \rightarrow \C$,  
the Fourier transform on $\R^d$ and its inverse are defined for
  $\bsomega,\bx \in \R^d$ by %
\begin{align} \label{eq:FT}
\widehat{v}(\bsomega) = \int_{\R^d} \exp(-\ri\bsomega\cdot  \bx)   v(\bx)\, \rd \bx,      \quad \text{and} \quad   {v}(\bx)
= \frac{1}{(2 \pi)^d} \int_{\R^d} \exp(\ri\bsomega \cdot \bx)   \widehat{v} (\bsomega ) \, \rd \bsomega .  
\end{align}

When $f:\R^d \rightarrow \C$ is $2 \gamma-$periodic in each coordinate direction and $f\in L_2\big([-\gamma,\gamma]^d\big)$
then $f$ can be represented as its Fourier series:
\begin{equation}
\label{eq:FS}
f(\bx) \ = \ (2 \gamma)^{-d} \sum_{\bk \in \bbZ^d} \widehat{f}_{\bk} \exp(\ri \, \bsomega_{\bsk}  \cdot \bsx ), 
\quad \text{where} \quad 
\widehat{f}_{\bk} \ = \ \int_{[-\gamma,\gamma]^d}
f( \bsx ) \exp\left(-  \ri \, \bsomega_\bsk  \cdot \bsx \right) \rd \bsx,
\end{equation} 
for all $\bk \in \bbZ^d$, with  $\bsomega_{\bk} := \pi \bk /\gamma $. Moreover, if $f$ belongs to a H\"older space $C^{0,\alpha}\big([-\gamma,\gamma]^d\big)$ for some $\alpha>0$ then the sum in \eqref{eq:FS} converges uniformly. 

Let $N \geq 2$ be an even integer,  set $h = 2 \gamma/N$
and introduce the infinite  uniform   
grid of points on $\R^d$:
\begin{equation} \label{eq:grid}
\bsx_{\bn} \,:=\,  \bn h   \qquad\text{for}\quad \bn \in \bbZ^d .
\end{equation}
By restricting $\bn$ to 
lie in \begin{align*}   {\overline{\bbZ}_N^d :=
	\big\{ -N/2, \ldots ,  N/2-1\big\}^d},
\end{align*}
we obtain a uniform grid on $[-\gamma,\gamma]^d$. 
The (appropriately scaled) discrete Fourier transform of the corresponding grid values of $f$, given by
\begin{align}\label{eq:eig_disc}
(S_N f)_{\bk} :=   h^d {\sum_{\bn \in \bbZbar_{N}^d}} f(\bx_{\bn})
\exp (- \ri \bsomega_{\bk} \cdot \bx_{\bn} ),
\quad\bk \in {\bbZbar}_{N}^d, 
\end{align}
yields an approximation of the Fourier coefficients $\widehat{f}_\bk$ for the same values of $\bk$. The approximation error can be quantified uniformly in $\bk\in \overline{\bbZ}_N^d$ by the following well-known result on the error of the trapezoidal rule applied to periodic functions, whose proof we include for convenience of the reader.

\begin{lemma}\label{lem:rect} Let $f$ be $2 \gamma-$periodic in each coordinate direction and $f\in C^{0,\alpha}\big([-\gamma,\gamma]^d\big)$ for some $\alpha>0$. Then 
	\begin{equation} \label{eq:trapezoidal}
	 h^d\! \sum_{\bn \in \oZ_N^d} f(\bx_{\bn})  - \int_{[-\gamma, \gamma]^d} f(\bx) \,\rd \bx  \ = \   \sum_{{0} \not= \bm \in \bbZ^d} \widehat{f}_{\bm N }  ,
	\end{equation}
	and in particular, for any $\bk \in {\bbZbar}_{N}^d$, 
	\begin{equation}\label{eq:samplingidentity}  (S_N f)_{\bk} - \widehat{f}_{\bk} = \sum_{{0} \not= \bm \in \bbZ^d} \widehat{f}_{\bk + \bm N} . 
	\end{equation}
	
\end{lemma}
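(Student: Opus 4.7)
The plan is to derive both parts from the Fourier series expansion of $f$ combined with the grid orthogonality of exponentials. Concretely, I will first prove \eqref{eq:samplingidentity}, and then obtain \eqref{eq:trapezoidal} as the special case $\bk = 0$.

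First I observe that, since $\omega_\bk = \pi \bk/\gamma$ and $\bx_\bn = \bn h = 2\gamma\bn/N$, we have $\omega_\bk \cdot \bx_\bn = 2\pi \bk\cdot \bn/N$. The standard finite-group orthogonality relation then gives
\begin{equation*}
 \sum_{\bn \in \oZ_N^d} \exp(\ri \omega_{\bj}\cdot \bx_\bn) \;=\; \begin{cases} N^d, & \bj \in N\bbZ^d,\\ 0, & \text{otherwise},\end{cases}
\end{equation*}
which follows factor-by-factor from the geometric-series identity on each coordinate.

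Next I substitute the Fourier series \eqref{eq:FS} for $f$ into the definition \eqref{eq:eig_disc} of $S_N f$. The H\"older assumption $f \in C^{0,\alpha}$ ensures that \eqref{eq:FS} converges to $f$ \emph{uniformly} on $[-\gamma,\gamma]^d$, which justifies exchanging the finite sum over $\bn$ with the Fourier series. Doing so yields
\begin{equation*}
(S_N f)_\bk \;=\; (2\gamma)^{-d} h^d \sum_{\bj \in \bbZ^d} \widehat{f}_{\bj} \sum_{\bn \in \oZ_N^d} \exp(\ri \omega_{\bj-\bk}\cdot \bx_\bn).
\end{equation*}
By the orthogonality relation, the inner sum is $N^d$ precisely when $\bj - \bk \in N\bbZ^d$, i.e.\ $\bj = \bk + \bm N$ for some $\bm \in \bbZ^d$, and zero otherwise. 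Using $h^d N^d = (2\gamma)^d$, the prefactor $(2\gamma)^{-d} h^d N^d$ collapses to $1$, giving
\begin{equation*}
(S_N f)_\bk \;=\; \sum_{\bm \in \bbZ^d} \widehat{f}_{\bk + \bm N} \;=\; \widehat{f}_\bk + \sum_{0 \neq \bm \in \bbZ^d} \widehat{f}_{\bk + \bm N},
\end{equation*}
which is \eqref{eq:samplingidentity}. Finally, \eqref{eq:trapezoidal} follows immediately on taking $\bk = 0$, since $(S_N f)_0 = h^d \sum_{\bn \in \oZ_N^d} f(\bx_\bn)$ and $\widehat{f}_0 = \int_{[-\gamma,\gamma]^d} f(\bx)\sdd\bx$.

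The only nontrivial technical point is the interchange of summation, which relies on uniform convergence of the Fourier series; this in turn is guaranteed by the H\"older hypothesis (e.g.\ via a standard Bernstein-type estimate on $|\widehat{f}_\bk|$). Everything else is essentially bookkeeping with the orthogonality identity.
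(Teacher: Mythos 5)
Your proof is correct and uses essentially the same ingredients as the paper's: substitution of the Fourier series into the trapezoidal sum, justified by uniform convergence under the Hölder hypothesis, followed by the discrete orthogonality of grid exponentials and the identity $h^dN^d = (2\gamma)^d$. The only difference is organizational — you prove \eqref{eq:samplingidentity} directly and obtain \eqref{eq:trapezoidal} by setting $\bk = 0$, whereas the paper proves \eqref{eq:trapezoidal} first and derives \eqref{eq:samplingidentity} by applying it to the auxiliary function $g(\bx) = f(\bx)\exp(-\ri\bsomega_\bk\cdot\bx)$; these are trivially equivalent.
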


\begin{proof}
	Using \eqref{eq:FS} and uniform convergence of the Fourier series by H\"older continuity of $f$,  we have 
	\begin{equation}
	\label{eq:L11}
	Q(f) :=  h^d \sum_{\bn \in \oZ_N^d} f(\bx_{\bn})  = \, (2 \gamma)^{-d} \sum_{\bm \in \bbZ^d} \widehat{f}_{\bm} \Bigg( h^d \sum_{\bn \in \oZ_N^d}
	\exp(\ri \bsomega_{\bm} \cdot \bx_{\bn} ) \Bigg).  
	\end{equation}
	Moreover,
	\begin{align*}
	\sum_{\bn \in \oZ_N^d} \exp(\ri \bsomega_{\bm} \cdot \bx_{\bn} ) & = \,  \sum_{\bn \in \oZ_N^d}\exp
	\left(\ri \frac{2 \pi}{N} {\bm} \cdot \bn \right) \\
	& = \ \sum_{\bn \in \oZ_N^d} \prod_{j=1}^d  \exp
	\left(\ri \frac{2 \pi}{N} {m_j} n_j \right)   = \, \prod_{j=1}^d \sum_{n \in \oZ_N}   \exp \left(\ri \frac{2 \pi}{N} {m_j} n \right)\,.
	\end{align*}  
	The last term vanishes unless  $m_j  = 0 (\mathrm{mod}\  N) $ for each $j = 1, \ldots , d$, in which case it takes the value $N^d$.  Since $h^d N^d = (2\gamma)^d$, we have 
	$
	Q(f) \ = \  \sum_{ \bm \in N \bbZ^d} \widehat{f}_{\bm} 
	$.
	Now \eqref{eq:trapezoidal} is obtained by noting that  $\int_{[-\gamma,\gamma]^d} f(\bx)\,\rd\bx =  \widehat{f}_{{0}}$.

	For fixed $\bk \in {\bbZbar}_{N}^d$, we introduce the function
	$ g (\bx) = f(\bx) \exp(- \ri \bsomega_{\bk} \cdot \bx)$, for $\bx \in [-\gamma, \gamma]^d$.
	Then $\widehat{f}_{\bk} = \int_{[-\gamma,\gamma]^d} g(\bx)\,\rd\bx$ and $(S_N f)_{\bk} = Q(g)$. From  \eqref{eq:trapezoidal} we conclude that
	\[
	   (S_N f)_{\bk} -  \widehat{f}_{\bk} \ = \ \sum_{{0} \not = \bm \in \bbZ^d}
	\widehat{g}_{\bm N}.\]
	Now \eqref{eq:samplingidentity} follows since $\widehat{g}_{\bm} = \widehat{f}_{\bk + \bm}$.
\end{proof} 

\subsection{Covariance functions} 

On a computational domain $D\subset\R^d$, we consider the fast evaluation of a Gaussian random field $Z(\bsx,
\omega)$ with
covariance function given by \eqref{eq:covar}.
In this paper we consider the important case of Mat\'ern covariance kernels with correlation length $\lambda>0$ and smoothness exponent $\nu>0$, given by  %
\begin{equation}\label{eq:materndef}
  \rho(\bx)  :=  \rho_{\lambda,\nu}(\bx):=\frac{2^{1-\nu}}{\Gamma(\nu)} \bigg(\frac{\sqrt{2\nu}|\bx|}{\lambda}\bigg)^{\nu}K_\nu\bigg(\frac{\sqrt{2\nu}|\bx|}{\lambda}\bigg)\, .  
\end{equation}
For its Fourier transform, we have (see, e.g., \cite[eq.\ (2.22)]{GKNSS}
\begin{align} \label{FT}
  \widehat \rho_{\lambda,\nu}(\bsomega) := \int_{\R^d} \rho_{\lambda,\nu}(\bx)\, \exp(-\ri \bsomega  \cdot \bx  ) \,\rd \bx  = C_{\lambda,\nu}  \bigg(\frac{2\nu}{\lambda^2}+|\bsomega|^2 \bigg)^{-(\nu+d/2)},\end{align}
where 
\begin{equation} \label{eq:C}
  C_{\lambda,\nu} := (2\sqrt{\pi})^d \frac{\Gamma(\nu+d/2)\,(2\nu)^\nu}{\Gamma(\nu)\,\lambda^{2\nu}}.
\end{equation}
The modified Bessel functions of the second kind $K_\nu$ have (see \cite[9.6.24]{AS}) the integral representations
\begin{equation}\label{Knuintegral}
  K_\nu(t) = \int_0^\infty e^{-t \cosh(s)} \cosh(\nu s)\,\dx s,
\end{equation}
which shows in particular that their values for fixed $t$ are monotonically increasing with respect to $\nu$ for $\nu \geq 0$, and also that $K_{-\nu} = K_\nu$. We will also use the following results that directly imply exponential decay of $\rho_{\lambda,\nu}(\bx)$ as $\abs{\bx}\to\infty$. Their proofs are given in Appendix \ref{auxproofs}.
\begin{lemma}\label{lem-estimate} Let  $\nu\geq 0$ and $t\geq 1/2$. Then  we have
	\begin{equation}\label{besselestimate}
	K_\nu(t) \leq e \frac{2^{2\nu}\Gamma(\nu)}{2\sqrt{2t}} e^{-t}\,.
	\end{equation}
\end{lemma}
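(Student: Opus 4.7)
The plan is to combine two complementary upper bounds for $K_\nu(t)$ and split on the sign of $t-\nu$.

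First, I would establish the \emph{power bound}
\[
K_\nu(t) \leq 2^{\nu-1}\Gamma(\nu)\,t^{-\nu}, \qquad \nu > 0,\ t > 0.
\]
This follows from the classical identity $\tfrac{d}{dt}\bigl(t^\nu K_\nu(t)\bigr) = -t^\nu K_{\nu-1}(t) \leq 0$, which makes $t \mapsto t^\nu K_\nu(t)$ nonincreasing, together with the standard small-$t$ asymptotic yielding $\lim_{t\to 0^+} t^\nu K_\nu(t) = 2^{\nu-1}\Gamma(\nu)$ for $\nu > 0$.

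Second, I would derive the \emph{Gaussian bound}
\[
K_\nu(t) \leq \sqrt{\tfrac{\pi}{2t}}\,e^{-t + \nu^2/(2t)}, \qquad \nu \geq 0,\ t > 0,
\]
directly from \eqref{Knuintegral}. The idea is to use $\cosh s \geq 1 + s^2/2$ to factor out $e^{-t}$ and leave a Gaussian factor $e^{-ts^2/2}$, then write $\cosh(\nu s) = (e^{\nu s}+e^{-\nu s})/2$ to extend the integral over all of $\mathbb{R}$, complete the square $-ts^2/2 + \nu s = -\tfrac{t}{2}(s - \nu/t)^2 + \nu^2/(2t)$, and evaluate.

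Third, I would combine the two bounds via a case split for $t \geq 1/2$. In the case $t \geq \nu$ (which is automatic when $\nu \leq 1/2$), the Gaussian bound together with $\nu^2/(2t) \leq \nu/2$ yields $K_\nu(t) \leq \sqrt{\pi/(2t)}\,e^{-t}\,e^{\nu/2}$, and the lemma reduces to the scalar inequality $2\sqrt{\pi}\,e^{\nu/2-1} \leq 2^{2\nu}\Gamma(\nu)$ for all $\nu > 0$. In the remaining case $1/2 \leq t < \nu$ (which forces $\nu > 1/2$), the power bound reduces the lemma to $e^{t-1} \leq (2t)^{\nu - 1/2}$, equivalently $t - 1 \leq (\nu - 1/2)\ln(2t)$ on $[1/2, \nu]$.

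The hardest step will be verifying these two scalar inequalities. The Gaussian-case bound $2\sqrt{\pi}\,e^{\nu/2-1} \leq 2^{2\nu}\Gamma(\nu)$ is not tight at any single value of $\nu$: both sides blow up as $\nu \to 0^+$ (via $\Gamma$) and as $\nu \to \infty$ (where $2^{2\nu}\Gamma(\nu) \sim (4\nu/e)^\nu$ by Stirling dominates the mere exponential growth of $e^{\nu/2}$), so one must bound the minimum of their ratio away from $1$ by direct computation near the unique zero of $\psi(\nu) + 2\ln 2 - 1/2$. The power-case inequality is handled by studying $g(t) = (\nu - 1/2)\ln(2t) - (t-1)$ on $[1/2, \nu]$: one checks $g(1/2) = 1/2 > 0$ and $g(\nu) \geq 0$ (both elementary), and notes that $g'(t) = (\nu-1/2)/t - 1$ has a single root, so $g$ is either monotone or unimodal on the interval and hence positive throughout.
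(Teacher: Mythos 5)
Your strategy is a genuine alternative to the paper's argument. The paper splits on $\nu\le\frac12$ versus $\nu>\frac12$, handling the first case by monotonicity $K_\nu\le K_{1/2}=\sqrt{\pi/(2t)}\,e^{-t}$ and the second via Watson's Laplace-type representation
\[
K_\nu(t)=\Bigl(\frac{\pi}{2t}\Bigr)^{1/2}\frac{e^{-t}}{\Gamma(\nu+\frac12)}\int_0^\infty e^{-u}u^{\nu-\frac12}\Bigl(1+\frac{u}{2t}\Bigr)^{\nu-\frac12}\dx u ,
\]
bounding $(1+u/(2t))^{\nu-1/2}\le(1+u)^{\nu-1/2}$ for $t\ge\frac12$, shifting the integral to pick up the factor $e$, and finishing with the Legendre duplication formula to produce $2^{2\nu}\Gamma(\nu)/(2\sqrt{\pi})$ on the nose. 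That route is fully analytic and requires no one-variable verification. You instead split on $t\ge\nu$ versus $\frac12\le t<\nu$, using a Gaussian-tail bound from $\cosh s\ge1+s^2/2$ in \eqref{Knuintegral} in the first case and the decreasing function $t^\nu K_\nu(t)$ in the second; both ingredients are correct, and your power-bound branch closes cleanly (e.g.\ $\ln x\ge 1-1/x$ gives $(\nu-\tfrac12)\ln(2\nu)-\nu+1\ge 1/(4\nu)>0$, and $g$ is concave, so endpoint checks suffice).

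The gap is in the Gaussian branch: the proposal reduces the lemma there to the scalar inequality $2\sqrt{\pi}\,e^{\nu/2-1}\le 2^{2\nu}\Gamma(\nu)$ for all $\nu>0$, but does not prove it. The inequality is true — the ratio is minimized near $\nu\approx 0.85$ where the gap is roughly $e^{0.59}$ — but establishing it requires either accurate evaluation of $\Gamma$ and $\psi$ at a specific interior point or some non-obvious analytic device, and your sketch leaves this as "direct computation". Since this is, as you say, the hardest step, the proposal is not yet a proof. Note that after the duplication formula, your target is $\Gamma(2\nu)\ge e^{\nu/2-1}\Gamma(\nu+\frac12)$, which is precisely the quantity the paper's Watson-representation argument produces with a clean margin; if you wanted to repair the gap without copying the paper, this reformulation plus a log-convexity or Stirling-with-explicit-remainder argument would be the place to work.
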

\begin{lemma}\label{lem-diff} Let $n\in \N_0$. Then  $\frac{\dx^n }{\dx t^n}\big(t^\nu K_\nu(t)\big)$ is of the form 
	\begin{equation}\label{lem-2-eq}
	\frac{\dx^n }{\dx t^n}\big(t^\nu K_\nu(t)\big) = \sum_{j=0}^{\lfloor {n}/{2}\rfloor } a_{n,j}t^{\nu-j}K_{\nu-n+j}(t)\,,
	\end{equation} 
	with coefficients $a_{n,j}$ satisfying
	\[
	  \sum_{j=0}^{\lfloor {n}/{2}\rfloor } |a_{n,j}|\leq n!\,.
	  \]
\end{lemma}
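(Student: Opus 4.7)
\textbf{Proof plan for Lemma \ref{lem-diff}.} The plan is to prove the claim by induction on $n$, based on the standard differentiation identity
\[
\frac{\dx}{\dx t}\bigl(t^\nu K_\nu(t)\bigr) \;=\; -\,t^\nu K_{\nu-1}(t),
\]
which holds for all $\nu\in\R$ (and which already gives the base case $n=0$ trivially, and the case $n=1$ with $a_{1,0}=-1$). To handle a general term of the form $t^{\nu-j}K_{\nu-n+j}(t)$ appearing at stage $n$, I would first rewrite it, using $j\leq\lfloor n/2\rfloor$ so $n-2j\geq 0$, as
\[
t^{\nu-j}K_{\nu-n+j}(t) \;=\; t^{\,n-2j}\,\bigl(t^{\nu-n+j}K_{\nu-n+j}(t)\bigr).
\]
The product rule combined with the displayed identity (applied with exponent $\nu-n+j$) then yields
\[
\frac{\dx}{\dx t}\bigl(t^{\nu-j}K_{\nu-n+j}(t)\bigr)
= (n-2j)\,t^{\nu-(j+1)}K_{\nu-(n+1)+(j+1)}(t)\;-\;t^{\nu-j}K_{\nu-(n+1)+j}(t),
\]
so that each old term produces at most two new terms fitting the shape required at stage $n+1$, with shifted index $j$ or $j+1$.

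The next step is to read off the induced recurrence. Differentiating the expression at stage $n$ term by term, grouping contributions by the new index, and noting that the new index $j+1$ can only exceed $\lfloor n/2\rfloor$ when $n$ is odd (and then only by one, consistent with $\lfloor (n+1)/2\rfloor=\lfloor n/2\rfloor+1$), one obtains
\[
a_{n+1,0}=-a_{n,0},\qquad a_{n+1,j}=-a_{n,j}+(n-2(j-1))\,a_{n,j-1}\quad\text{for } j\geq 1,
\]
with the convention that $a_{n,k}=0$ for $k$ outside $\{0,\dots,\lfloor n/2\rfloor\}$; in particular this gives the correct range of $j$ at stage $n+1$ automatically, since the coefficient $(n-2j+2)$ vanishes precisely in the case ($n$ even, $j=\lfloor n/2\rfloor+1$) where no new term is needed.

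Finally, to propagate the bound on $\sum_j|a_{n,j}|$, I would estimate via the triangle inequality
\[
\sum_j|a_{n+1,j}|
\;\leq\; \sum_j|a_{n,j}| \;+\; \sum_{j\geq 1}(n-2(j-1))\,|a_{n,j-1}|
\;\leq\; \sum_j|a_{n,j}| \;+\; n\sum_k|a_{n,k}|
\;=\;(n+1)\sum_j|a_{n,j}|,
\]
using that $n-2(j-1)\leq n$ for $j\geq 1$. By induction, $\sum_j|a_{n+1,j}|\leq (n+1)\cdot n! = (n+1)!$, closing the induction.

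I expect the substantive step to be identifying the right way to split off $t^{n-2j}$ in order to apply the Bessel differentiation identity cleanly; once this is done, the recurrence and the bound $\sum|a_{n,j}|\leq n!$ follow by straightforward bookkeeping, and the crude estimate $n-2(j-1)\leq n$ is precisely what yields the factorial growth with no slack to worry about.
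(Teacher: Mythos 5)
Your proof is correct and follows essentially the same route as the paper: an induction on $n$ using the identity $\frac{\dx}{\dx t}\bigl(t^\tau K_\tau(t)\bigr)=-t^\tau K_{\tau-1}(t)$ together with the same rewriting $t^{\nu-j}K_{\nu-n+j}(t)=t^{n-2j}\bigl(t^{\nu-n+j}K_{\nu-n+j}(t)\bigr)$, leading to the same per-step factor of at most $n+1$ in the coefficient sum. The only difference is cosmetic: you organize the estimate by destination index via the recurrence $a_{n+1,j}=-a_{n,j}+(n-2(j-1))a_{n,j-1}$, while the paper groups contributions by source index $j$ and bounds each by $(1+n-2j)\abs{a_{n,j}}\leq (n+1)\abs{a_{n,j}}$; these are equivalent.
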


Inspection of \eqref{eq:materndef} shows that changing $\lambda$ amounts to a rescaling of the computational domain $D$. Hence without loss of generality, we may assume our computational domain $D$ to be contained in the box $[-\frac12, \frac12]^d$, so that any difference of two points in $D$ is contained in $[-1,1]^d$. In what follows, we subsequently embed this box into a torus $[-\gamma,\gamma]^d$ with $\gamma \geq 1$, on which we define a $2\gamma$-periodic covariance $\rho^\ext$ such that $\rho^\ext(\bx) = \rho(\bx)$ for $\bx \in [-1,1]^d$, which means that the covariance between any pair of points in $D$ is preserved in replacing $\rho$ by $\rho^\ext$.

\section{Classical Periodization}\label{sec:ce}

In this section, we treat the classical periodization \eqref{ex1}. We prove in Theorem \ref{thm:conjecture} that for Mat\'ern covariances, the asymptotic decay of the eigenvalues of the extended matrix  $\Sigmaext$ is the same as that of the underlying KL eigenvalues \eqref{kldecay}, up to a    multiplicative   factor which grows logarithmically in $\abs{ \log (h)}^\nu $.  %
  This confirms a recent  conjecture \cite[eq.~(3.9)]{GKNSS}.

In this case  
  $\Rext$ is given by
\begin{equation}\label{eq:Rext_def}
 \Rext_{\bn, \bn'} \, = \, \rho^{\rm{ext}}\big(  \bx_{\bn} - \bx_{\bn'}   \big)   \,=\, \rho^{\rm{ext}}\big(  ({\bn - \bn'}) h  \big),
 \qquad \bn, \bn' \in \oZ_{N}^d ,
\end{equation}
with $\rhoext$ defined in \eqref{period}, \eqref{ex1}. 
$\Rext$ is  a circulant extension of the covariance matrix  $\Sigma$ of the form  \eqref{covmatrix},
obtained when
sampling  $Z$ at those points $\{\bx_{\bn}\}$ which lie  in  $[-1/2,1/2]^d$.
 Sampling on more  general $d$-dimensional rectangles can be treated in the same manner with the obvious modifications.
 If the index set  $\bn$ is   given  lexicographical ordering, then $\Sigma$
is  a nested block Toeplitz matrix where the number of nested levels is
the physical dimension $d$ and  $\Rext$ is a nested block circulant extension of it.

To analyse the eigenvalues of $\Rext$ it is useful to also consider 
the continuous
periodic covariance integral operator
\begin{equation*} %
  {\cR}^{\rm ext}\, v (\bsx)
  \,:=\, \ \int_{[-\gamma,\gamma]^d} \rho^{\rm ext} (\bsx-\bsxi)\, v(\bsxi) \,\rd\bsxi ,
  \qquad \bsx \in [-\gamma, \gamma]^d \,.
\end{equation*}
Then the scaled circulant matrix $h^d \Rext$ can be identified as a Nystr\"{o}m approximation of  
${\cR}^{\rm ext}$,  using the composite trapezoidal rule with respect to the uniform grid on  $[-\gamma, \gamma]^d$ given  by the points \eqref{eq:grid} with $\bn \in \oZ_N^d$.  
    The operator $\cRext$ is a compact operator on the space of
$2\len$-periodic continuous functions on $\bbR^d$, and so it has a
discrete spectrum with the only accumulation point at the origin.

The following result is standard (see for example \cite{GKNSS}).

\begin{proposition} \label{prop:eigs}
 
  \begin{itemize}
  \item[(i)]
The eigenvalues of $\cRext$  are 
$\widehat{\rho}_{\bsk},\,  \bk \in \bbZ^d$,
as defined in \eqref{eq:FS},  with  corresponding eigenfunctions normalized in $L^2([-\gamma, \gamma]^d)$:  
\[  
v_\bk(\bsx) = (2\gamma)^{-d/2} \exp(\ri \,
  \bsomega_\bsk \cdot \bsx) . 
\]

\item[(ii)]   The eigenvalues of   $h^d \Rext$ are $(S_N\rho)_{\bk}$, $\bk \in {\bbZbar}_{N}^d$, as defined in \eqref{eq:eig_disc},
with corresponding  eigenvectors normalized with respect to the Euclidean norm:
$$(\bsV_\bk)_{\bn} =  {N^{-d/2}} \exp(i \bsomega_{\bk} \cdot \bx_{\bn}),  \qquad \bn, \bk \in \oZ_N^d . $$

\end{itemize}
\end{proposition}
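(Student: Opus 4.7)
The plan is to verify both statements by direct computation, exploiting that convolution with a periodic kernel (respectively a circulant matrix) is diagonalized by the Fourier basis, and then invoking completeness of that basis to conclude these are \emph{all} the eigenpairs.

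\textbf{Part (i).} I would apply $\cRext$ directly to $v_\bk$. Writing the defining integral and substituting $\eta = \bx - \bsxi$, the $2\gamma$-periodicity of $\rhoext$ in each coordinate allows the integration domain to be shifted back to $[-\gamma,\gamma]^d$. This yields
\[
  \cRext v_\bk(\bx) = (2\gamma)^{-d/2} \exp(\ri\,\bsomega_\bk \cdot \bx) \int_{[-\gamma,\gamma]^d} \rhoext(\eta) \exp(-\ri\,\bsomega_\bk \cdot \eta)\,\rd\eta = \widehat{\rho}_\bk\, v_\bk(\bx),
\]
by the definition of $\widehat\rho_\bk$ in \eqref{eq:FS} (applied to $\rhoext$, which agrees with $\rho$ on the relevant domain through the construction in \eqref{period}). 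For completeness, I would note that $\{v_\bk\}_{\bk\in\bbZ^d}$ is a complete orthonormal system in $L^2([-\gamma,\gamma]^d)$ and that $\cRext$ is a bounded self-adjoint compact operator on this space (symmetry of $\rho$ yields symmetry of $\rhoext$, and boundedness plus compactness follows from the H\"older continuity of $\rhoext$ away from its jumps in the classical case, or from the $L^2$ theory of convolution operators with a bounded periodic kernel). Compact self-adjoint operators on a separable Hilbert space are exhausted by any complete orthonormal system of eigenvectors, so there are no further eigenvalues.

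\textbf{Part (ii).} I would apply $h^d\Rext$ to the candidate eigenvector $\bsV_\bk$ and use the nested block circulant structure. Writing
\[
  (h^d \Rext \bsV_\bk)_{\bn} = h^d N^{-d/2} \sum_{\bn' \in \oZ_N^d} \rhoext((\bn - \bn')h)\,\exp(\ri\,\bsomega_\bk \cdot \bx_{\bn'}),
\]
I would substitute $\bm = \bn - \bn'$ modulo $N$ in each coordinate. Two periodicities combine to make this work: first, $\rhoext((\bn-\bn')h) = \rhoext(\bm h)$ because $Nh = 2\gamma$ and $\rhoext$ is $2\gamma$-periodic; second, $\exp(\ri\,\bsomega_\bk \cdot \bsx_{\bn'}) = \exp(\ri\,\bsomega_\bk \cdot \bsx_{\bn-\bm})$ (with no modular correction) since $\exp(\ri\,\bsomega_\bk \cdot N h \bsalpha) = \exp(2\pi\ri\,\bk\cdot\bsalpha) = 1$ for any $\bsalpha\in\bbZ^d$. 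The sum factors as
\[
  (h^d\Rext \bsV_\bk)_\bn = N^{-d/2}\exp(\ri\,\bsomega_\bk \cdot \bx_\bn) \cdot h^d \sum_{\bm \in \oZ_N^d} \rhoext(\bx_\bm)\exp(-\ri\,\bsomega_\bk \cdot \bx_\bm) = (S_N \rho)_\bk (\bsV_\bk)_\bn,
\]
where I use that $\rhoext = \rho$ on the grid points $\bx_\bm$ with $\bm\in\oZ_N^d$ (since these lie in $[-\gamma,\gamma)^d$ and $\varphi=\chi_{[-\gamma,\gamma)}$ in \eqref{ex1}), matching the definition \eqref{eq:eig_disc} of $S_N\rho$.

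\textbf{Orthonormality and exhaustion.} To close the argument, I would verify that $\{\bsV_\bk : \bk\in\oZ_N^d\}$ is orthonormal in $\bbC^{N^d}$ with respect to the Euclidean inner product, using the standard geometric-sum identity $\sum_{n\in\oZ_N}\exp(2\pi\ri(k-k')n/N) = N\delta_{k,k'}$ in each coordinate. Since this gives $N^d$ mutually orthogonal eigenvectors of an $N^d \times N^d$ matrix, there are no further eigenvalues. No step should present a serious obstacle; the main point requiring care is keeping track of the index shifts when exploiting periodicity in both $\rhoext$ and the complex exponential, particularly the cancellation making the modular reduction of $\bn-\bn'$ invisible in the exponent.
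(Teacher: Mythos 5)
The paper does not supply a proof of this proposition (it is cited as standard, with reference to [GKNSS]); your proposal fills that in with exactly the standard argument, and it is correct: direct verification that the Fourier modes diagonalise the periodic convolution operator and the circulant matrix, followed by orthonormality and completeness to conclude the spectrum is exhausted. One small terminological slip: in the classical periodization $\rhoext$ has kinks (loss of differentiability at $\{2n\gamma\}$), not jumps --- it remains continuous since $\rho$ is even so $\rho(\gamma)=\rho(-\gamma)$ --- but this is harmless for your compactness claim, which only needs $\rhoext \in L^2\bigl([-\gamma,\gamma]^d\bigr)$.
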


Note that $S_N \rhoext = S_N \rho$ since $\rho$ coincides with $\rhoext$ on $[-\gamma, \gamma]^d$.
It was convenient to introduce the scaling factor $h$ in (ii) above, since we can then identify  \eqref{eq:eig_disc}
as the trapezoidal rule approximation of the ($2 \gamma$-periodic) Fourier transform  defining $\widehat{\rho}_{\bsk} $.

Our analysis for estimating the decay of  $(S_N\rho)_{\bk}$
as $\vert \bk \vert \rightarrow \infty$
will be based on the formula \eqref{eq:samplingidentity} and  estimating the rate of decay of $\widehat{\rho}_{\bk}$.       
To this end, we now restrict our consideration to the Mat\'ern covariances, that is, for the remainder of this section we assume
\[
\rho = \rho_{\lambda,\nu}
\]
with some $\lambda,\nu>0$, where $\rho_{\lambda,\nu}$ is defined in \eqref{eq:materndef}.
In the recent paper \cite{GKNSS} the following theorem was proven for this case.

\begin{theorem}\label{thm:matern-growth}
Let $1/2 \leq \nu < \infty$, $\lambda \leq 1$, and $h/\lambda \leq e^{-1}$. Then there exist $C_1, C_2 >0$ which may depend on
 $d$ but are independent of $\gamma, h,
\lambda, \nu$, such that $\Rext$ is positive definite if
\begin{align}
\frac{\gamma}{\lambda}  \ \geq  \  C_1\  + \ C_2\,  \nu^{\frac12} \, \log\bigl( \max\big\{ {\lambda}/{h}, \, \nu^{\frac12}\big\} \bigr) \, .
\label{eq:alphahsmall}
\end{align}
\end{theorem}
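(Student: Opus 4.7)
The plan is to combine Proposition \ref{prop:eigs}(ii) with the Poisson summation formula to decompose each eigenvalue of $h^d \Rext$ into a non-negative ``main part'' plus a ``tail'' that decays exponentially in $\gamma/\lambda$. By Proposition \ref{prop:eigs}(ii), $\Rext$ is positive definite if and only if $(S_N \rho)_\bk > 0$ for every $\bk \in \bbZbar_N^d$, so the task reduces to establishing positivity of these discrete Fourier coefficients uniformly in $\bk$.

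First, since $\rho = \rho_{\lambda,\nu}$ has exponential decay, I would apply Poisson summation to the function $\bx \mapsto \rho(\bx)\, e^{-\ri \bsomega_\bk \cdot \bx}$ on the full lattice $h\bbZ^d$ and split the sum at $\bbZbar_N^d$, writing
\begin{align*}
(S_N \rho)_\bk = \sum_{\bm \in \bbZ^d} \widehat{\rho}(\bsomega_\bk + 2\pi \bm / h) \;-\; T_\bk,
\end{align*}
where $T_\bk := h^d \sum_{\bn \in \bbZ^d \setminus \bbZbar_N^d} \rho(\bx_\bn)\, e^{-\ri \bsomega_\bk \cdot \bx_\bn}$. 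Every term in the Poisson sum is non-negative by \eqref{FT}, so the main part is bounded below by $\widehat{\rho}(\bsomega_\bk)$; using $|\bsomega_\bk|^2 \leq d\pi^2/h^2$ for $\bk \in \bbZbar_N^d$ yields the $\bk$-independent lower bound
\begin{align*}
\widehat{\rho}(\bsomega_\bk) \;\geq\; C_{\lambda,\nu}\bigl(2\nu/\lambda^2 + d\pi^2/h^2\bigr)^{-(\nu + d/2)}.
\end{align*}

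Next, I would control $|T_\bk|$ (which is $\bk$-independent after triangle inequality) by comparing the grid sum with the integral $\int_{|\bx|_\infty \geq \gamma - h} \rho_{\lambda,\nu}(\bx)\, \sdd\bx$, using monotonicity of $\rho_{\lambda,\nu}$ away from the origin. Lemma \ref{lem-estimate} yields the pointwise bound $\rho_{\lambda,\nu}(\bx) \leq C\, 2^{\nu}\, t^{\nu-1/2}\, e^{-t}$ with $t = \sqrt{2\nu}|\bx|/\lambda$ (applicable in the tail region since $\nu\geq 1/2$, $\lambda\leq 1$, $\gamma\geq 1$). Passing to polar coordinates and substituting $t = \sqrt{2\nu}|\bx|/\lambda$ reduces the tail to an incomplete Gamma integral evaluated at $s_0 := \sqrt{2\nu}(\gamma-h)/\lambda$, which is bounded by
\begin{align*}
|T_\bk| \;\leq\; C\,(\lambda/\sqrt{\nu})^d\,\bigl(s_0^{d+\nu - 3/2} + \Gamma(d+\nu-1/2)\bigr)\, e^{-s_0}.
\end{align*}

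To conclude, one requires the lower bound on the main part to dominate the tail bound. Taking logarithms and invoking Stirling's formula for $\Gamma(\nu + d/2)/\Gamma(\nu)$, the condition ``main $>$ tail'' becomes
\begin{align*}
\sqrt{2\nu}\,\gamma/\lambda \;\gtrsim\; (\nu + d/2)\,\log\bigl(2\nu/\lambda^2 + d\pi^2/h^2\bigr) + \nu\log\nu + O(\nu),
\end{align*}
and using $\lambda \leq 1$ together with $h/\lambda \leq e^{-1}$ one simplifies $\log(2\nu/\lambda^2 + d\pi^2/h^2)$ to $2\log\max\{\lambda/h, \sqrt{\nu}\} + O(1)$. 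Dividing by $\sqrt{2\nu}$ and absorbing lower-order terms into an additive constant then yields precisely \eqref{eq:alphahsmall}. The principal technical obstacle is carrying out the Stirling estimates uniformly across both the regime $\nu \to 1/2$ (where $\Gamma(\nu)$ in $C_{\lambda,\nu}$ must be kept away from zero) and the regime $\nu \to \infty$ (where the $\nu\log\nu$ contributions from $\Gamma(\nu+d/2)/\Gamma(\nu)\sim \nu^{d/2}$, from the Matérn prefactor $(2\nu)^\nu$, and from the polynomial factor $s_0^{d+\nu-3/2}$ must all be absorbed into the single additive constant $C_1$), so that the final constants $C_1, C_2$ depend only on $d$.
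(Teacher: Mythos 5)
The paper does \emph{not} prove Theorem~\ref{thm:matern-growth}: it is quoted verbatim from \cite{GKNSS} ("In the recent paper \cite{GKNSS} the following theorem was proven for this case") and is then simply invoked as an ingredient in the proof of Theorem~\ref{thm:conjecture}. So there is no in-paper proof to compare against. That said, your Poisson-summation (aliasing) decomposition of $(S_N\rho)_\bk$ into a non-negative ``main'' sum $\sum_{\bm}\widehat\rho(\bsomega_\bk+2\pi\bm/h)\geq \widehat\rho(\bsomega_\bk)$ and a tail $T_\bk$ bounded by the exponentially small quantity $h^d\sum_{\abs{\bx_\bn}_\infty\geq\gamma}\rho(\bx_\bn)$ is exactly the natural strategy for this kind of positive-definiteness result, and is the mechanism underlying \cite{GKNSS}. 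The overall architecture of your argument is therefore sound.

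There is, however, one step that is wrong as written and that you should be careful about. You claim that $\log(2\nu/\lambda^2 + d\pi^2/h^2)$ simplifies to $2\log\max\{\lambda/h,\sqrt\nu\}+O(1)$, but in fact
\[
  \log\bigl(2\nu/\lambda^2 + d\pi^2/h^2\bigr)
  \;=\; -2\log\lambda \;+\; 2\log\max\{\sqrt\nu,\lambda/h\} \;+\; O(1),
\]
and the $-2\log\lambda$ term is \emph{not} $O(1)$ since the theorem allows $\lambda\to 0$ (only $\lambda\leq 1$ is assumed, and the constants must be $\lambda$-independent). What actually happens is a cancellation that your sketch does not make visible: the $\lambda^{-2\nu}$ in $C_{\lambda,\nu}$ contributes $-2\nu\log\lambda$ to $\log(\text{main})$, the factor $(\cdots)^{-(\nu+d/2)}$ contributes $+(2\nu+d)\log\lambda$, and the prefactor $(\lambda/\sqrt\nu)^d$ in your tail bound contributes $d\log\lambda$ to $\log(\text{tail})$; the net $\lambda$-dependence on both sides is $d\log\lambda$, which cancels in the comparison ``main $>$ tail.'' If you drop the $-2\log\lambda$ term where you do, you break this cancellation and the resulting condition on $\gamma/\lambda$ acquires a spurious $\log(1/\lambda)$ contribution. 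A second, smaller point: your incomplete Gamma bound $\Gamma(a;s_0)\lesssim(s_0^{a-1}+\Gamma(a))\,e^{-s_0}$ is false for $s_0\sim a$; you should instead use the bound $\Gamma(a;s_0)\leq 2 s_0^{a-1}e^{-s_0}$, valid for $s_0\geq 2(a-1)$, and verify that the hypothesis~\eqref{eq:alphahsmall} forces you into that regime. Finally, the $\nu\log\nu$ contributions are absorbed into the multiplicative term $C_2\nu^{1/2}\log\max\{\lambda/h,\sqrt\nu\}$ (using $\log\max\{\cdots\}\geq\frac12\log\nu$), not into the additive constant $C_1$ as you state.
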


We are now in a position to formulate our result on the rate of decay of eigenvalues of  $\Rext$  for Mat\' ern covariances,
which proves the conjecture made in \cite[eq.\ (3.9)]{GKNSS} up to an additional factor of order $|\!\log h|^\nu$ in the constant. 

  \begin{theorem}\label{thm:conjecture}
    Let $\nu$, $\lambda$, and $h$ be as in Theorem \ref{thm:matern-growth}. Let
    $\gamma^* = \gamma^*(\lambda,\nu,h)$ denote the smallest value of $\gamma\geq 1$ such that  condition
    \eqref{eq:alphahsmall} holds true and, adjusting $C_2$ if necessary,  assume that $C_2 > 2\sqrt{2}(2d-2)$.
  Suppose  $\gamma$ is chosen in the range  $\gamma^*\leq \gamma \leq a \gamma^*$ for some  $a\geq 1$
  independent of $h$ and let   $\Lambda^\ext_j$ denote   eigenvalues of $\Rext$ in  \rev{non-increasing}  order.    Then there exists $C>0$ such that for all $N$,
  	\begin{equation}\label{eq:conjecture}
  	 0 \ < \  \sqrt{\frac{\Lambda^\ext_j}{N^d}}  \  \leq\  C \, a^{\nu + d-1}\,  \left(\log \frac{\lambda}h \right)^\nu
          j^{-\frac12 - \frac\nu{d}},  \qquad j = 1,\ldots, N^d.
  	\end{equation}
      \end{theorem}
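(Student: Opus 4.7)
The plan is to bound each $(S_N\rho)_\bk$, $\bk\in\oZ_N^d$, uniformly from above by the sum of a main algebraically decaying term in $|\bsomega_\bk|$, an aliasing term of order $h^{2\nu+d}$, and an exponentially small tail error; a volume-counting estimate on the Fourier-mode lattice then converts this pointwise bound into an ordered bound on the eigenvalues. The strict positivity $\Lambda^\ext_j>0$ comes directly from Theorem \ref{thm:matern-growth}, which guarantees positive-definiteness of $\Rext$ under \eqref{eq:alphahsmall}.

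Starting from Proposition \ref{prop:eigs}(ii), I write $\Lambda^\ext_j=h^{-d}(S_N\rho)_{\bk_{(j)}}$ for a suitable ordering $\{\bk_{(j)}\}$ of $\oZ_N^d$; Lemma \ref{lem:rect} then gives $(S_N\rho)_\bk=\sum_{\bm\in\bbZ^d}\widehat{\rhoext}_{\bk+\bm N}$. I would next compare this with the exact Poisson sum $\sum_{\bm\in\bbZ^d}\widehat\rho(\bsomega_{\bk+\bm N})$, where $\widehat\rho$ is the continuous Fourier transform given in \eqref{FT}. Since $\rhoext=\rho|_{[-\gamma,\gamma]^d}$ in the classical case, its Fourier coefficients equal those of the \emph{full} periodization $\widetilde\rho:=\sum_{\bm\in\bbZ^d}\rho(\cdot+2\gamma\bm)$ only up to a tail correction, so it is cleaner to estimate $(S_N\rho)_\bk-(S_N\widetilde\rho)_\bk$ directly on the grid: positivity of $\rho$ collapses it to a single bound on $\int_{\R^d\setminus[-\gamma,\gamma]^d}\rho(\bx)\,\rd\bx$ (controllable via Lemma \ref{lem-estimate}), while Lemma \ref{lem:rect} applied to $\widetilde\rho$ yields the exact identity $(S_N\widetilde\rho)_\bk=\sum_{\bm\in\bbZ^d}\widehat\rho(\bsomega_{\bk+\bm N})$.

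For that Poisson sum I separate the diagonal term $\widehat\rho(\bsomega_\bk)$ from the aliased remainder. The key geometric fact is that for $\bm\neq 0$ and $\bk\in\oZ_N^d$, the coordinate with largest $|m_j|$ satisfies $|k_j+m_j N|\geq N/2$, hence $|\bsomega_{\bk+\bm N}|\geq\pi/h$; combining this with $\widehat\rho(\bsomega)\leq C_{\lambda,\nu}|\bsomega|^{-2\nu-d}$ and absolute convergence of $\sum_{\bm\neq 0}|\bm|^{-2\nu-d}$ yields $\sum_{\bm\neq 0}\widehat\rho(\bsomega_{\bk+\bm N})\leq C\,C_{\lambda,\nu}\,h^{2\nu+d}$. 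Altogether $(S_N\rho)_\bk\leq C_{\lambda,\nu}\bigl[(2\nu/\lambda^2+|\bsomega_\bk|^2)^{-(\nu+d/2)}+C\,h^{2\nu+d}\bigr]+E_\bk$, with $E_\bk$ uniformly small. Because this bound depends on $\bk$ only through $|\bsomega_\bk|$, its $j$-th largest value is attained at the $j$-th smallest $|\bsomega_\bk|$. A volume comparison on the spacing-$\pi/\gamma$ lattice $\{\bsomega_\bk:\bk\in\oZ_N^d\}$ gives $|\bsomega_{\bk_{(j)}}|\geq c_d\,j^{1/d}/\gamma$ for $j\leq N^d$ (using $\nu\geq 1/2$ to absorb the plateau regime $|\bsomega|\leq\sqrt{2\nu}/\lambda$), and direct arithmetic shows that the aliasing term $C_{\lambda,\nu}h^{2\nu+d}$, after dividing by $(2\gamma)^d=(hN)^d$, satisfies the same $j^{-1-2\nu/d}$ bound. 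Exploiting $\Lambda^\ext_j/N^d=(S_N\rho)_{\bk_{(j)}}/(2\gamma)^d$, the hypothesis $\gamma\leq a\gamma^*$, and the explicit constant \eqref{eq:C} to obtain $C_{\lambda,\nu}(\gamma/\lambda)^{2\nu}\leq C\,a^{2\nu}(\log(\lambda/h))^{2\nu}$, the asserted inequality follows after taking square roots.

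The principal obstacle will be the uniform quantitative control of $E_\bk$: the individual discrepancies $|\widehat{\rhoext}_{\bk+\bm N}-\widehat\rho(\bsomega_{\bk+\bm N})|$ do not form an absolutely summable family in $\bm$, so they must be aggregated through the global tail integral $\int_{\R^d\setminus[-\gamma,\gamma]^d}\rho\,\rd\bx$ described above. Its spherical-coordinate estimate via Lemma \ref{lem-estimate} carries a polynomial prefactor of order $(\gamma/\lambda)^{\nu+d-3/2}$ (which, after invoking $\gamma\leq a\gamma^*$, produces the $a^{d-1}$-type contribution that accounts for the stated $a^{\nu+d-1}$ dependence) multiplied by the decaying exponential $e^{-\sqrt{2\nu}\gamma/\lambda}\leq(h/\lambda)^{\sqrt{2}\nu C_2}$. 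The adjusted condition $C_2>2\sqrt{2}(2d-2)$ is exactly what forces this exponential factor to dominate every polynomial prefactor that arises, so that $E_\bk/(2\gamma)^d$ is absorbed into the target $j^{-1-2\nu/d}$ decay uniformly in $1\leq j\leq N^d$.
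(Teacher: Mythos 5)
Your strategy is genuinely different from the paper's and is, in spirit, an elegant alternative. The paper decomposes $\rho^{\ext} = \beta + \sigma$ using a smooth cutoff $\phi_\gamma$, estimates $\widehat\beta_\bk$ via the convolution theorem, and uses dimensionwise integration by parts on the nonsmooth ``kink'' part $\sigma$ (Lemmas \ref{lmmsmoothdecay}--\ref{lmmkinkdecay}). You instead compare to the full spatial periodization $\widetilde\rho = \sum_{\bm}\rho(\cdot+2\gamma\bm)$, exploit that its Fourier coefficients are exactly $\widehat\rho(\bsomega_\bk)$ (so the aliasing identity of Lemma \ref{lem:rect} produces a clean Poisson sum), and push the entire periodization artefact into a single tail-integral error $E_\bk$. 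This removes the cutoff construction and the integration by parts entirely, and indeed the main algebraic term and the aliasing term work out. It would also, as a bonus, give the slightly sharper $a$-dependence $a^\nu$ rather than $a^{\nu+d-1}$, since your error term (once bounded by its value at $\gamma=\gamma^*$, the worst case for the exponential) carries no $a$-factor -- though note that your reasoning to the contrary is off, since you evaluate the polynomial prefactor at $\gamma=a\gamma^*$ but the exponential at $\gamma=\gamma^*$, which are the favorable ends in each case.

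However, there is a real gap in the absorption of $E_\bk$. Your bound on $E_\bk$ is \emph{flat} in $\bk$: it is dominated by the tail integral and carries no algebraic decay in $|\bsomega_\bk|$. The paper's corresponding kink term $\widehat\sigma_\bk$, obtained after integration by parts, instead carries the extra factor $\prod_i(1+|\bk_i|)^{-2}$, which is the whole point of the $\beta+\sigma$ split (the function $\widetilde\rho-\rho^{\ext}$ still has the boundary kink, so you cannot regain this decay by integration by parts on your side without reintroducing something like the cutoff). Concretely: to absorb a flat $E_\bk$ into the main term at the worst index $|\bk|\sim N$, where the main term is $\sim h^{2\nu+d}$, you need $(h/\lambda)^{\sqrt{2}C_2\nu}\lesssim h^{2\nu+d}$, i.e., $C_2 > (2\nu+d)/(\sqrt2\,\nu)$. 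The paper, by contrast, needs only $C_2 > 2\sqrt2\,(2\nu+d-2)/\nu$ because of the extra $-2$ coming from the kink's algebraic decay; combined with the paper's factor-of-four loss in the exponential, the two conditions happen to cross, and in $d=1$ (with $\nu=1/2$) your approach requires $C_2>2\sqrt2$ while the theorem's hypothesis $C_2>2\sqrt2(2d-2)=0$ imposes nothing. So your claim that ``$C_2>2\sqrt2(2d-2)$ is exactly what forces this exponential factor to dominate'' does not hold as stated; you are proving a version of the theorem under a different lower bound on $C_2$, which would need to be made explicit. A secondary, more technical gap is that you replace the grid sum $h^d\sum_\bn\sum_{\bm\neq0}\rho(\bx_\bn+2\gamma\bm)$ by $\int_{\R^d\setminus[-\gamma,\gamma]^d}\rho$ without controlling the quadrature error; this is fixable (the sum can be bounded directly from the pointwise exponential decay of Lemma \ref{lem-estimate}, without passing through a Riemann-sum comparison), but as written it is a missing step.
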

      
 The analysis which follows will be explicit in $j$, $h$, and $\gamma$ but not in the \MAT parameters $\lambda, \nu$ or in \rev{the} dimension $d$. \rev{Note that in applications, one is typically interested in the $h \ll \lambda \leq \operatorname{diam}(D)$, and thus our assumptions on $\lambda$ and $h$ only exclude practically irrelevant cases.} Recall that $C$ \rev{denotes} a generic constant which may change from line to line and may depend on $\lambda, \nu$ or $d$.
    
The proof of Theorem \ref{thm:conjecture} uses 
  Proposition \ref{prop:eigs}(ii),  which tells us that 
  $h^d\Lambda_j^\ext =  (S_N\rho)_{\bk(j)}$ for some $\bk(j) \in \oZ_N^d$.
Since Theorem \ref{thm:matern-growth} and Lemma \ref{lem:rect} give us 
\begin{equation}\label{eq:conjecturestart}
0 \ < \  (S_N\rho)_{\bsk}   
\ \leq\  |\widehat{\rho}_{\bsk} | + \bigabs{(S_N\rho)_{\bsk} - \widehat{\rho}_{\bsk}} 
= 
|\widehat{\rho}_{\bsk}| + \bigg|\sum_{{0} \not= \bm \in \bbZ^d} \widehat{\rho}_{\bk + \bm N}\bigg| \, , 
\end{equation}
the proof proceeds by obtaining   suitable estimates for the Fourier coefficients $\widehat{\rho}_\bk$ of the periodized covariance, as defined in \eqref{eq:FS}.  To this end, we use a cut-off function to isolate the artificial nonsmooth
part of $\rhoext$ created by the classical periodization.      
      We thus define an even smooth univariate cut-off function
  $\phi: \R \rightarrow \R $ by requiring that $\phi$ is supported on $[-3/4, 3/4]$ and 
  $$ \phi(t) = 1, \quad \text{for} \quad t \in [-1/2, 1/2], \quad \text{and} \quad \phi'(t) < 0, \quad t \in [1/2, 3/4].   $$
  For any $\gamma >0$, we can scale this to a cut-off function supported on $[-3\gamma/4, 3\gamma/4]$   \
  by  defining 
   $$ \phi_\gamma(\bx) = \phi\big(\vert \bx \vert /\gamma\big),\qquad \ \bx \in \R^d .  $$ 
      Using this function, we now write
      \begin{equation}\label{eq:cecutoff_decomp}
       \rhoext \  : = \ \beta + \sigma, \quad \text{where} \quad  \beta = \rhoext \phi_\gamma \quad \text{and} \quad   \sigma =  \rhoext (1 - \phi_\gamma) . 
      \end{equation}
            Thus  $\beta$ coincides with $\rho$ in a neighbourhood of the origin and vanishes in a neighbourhood of the interface 
        where $\rhoext$ has undergone  its (nonsmooth) extension,
        while the support of $\sigma$ covers exactly this interface. In the following two lemmas, we separately estimate $\widehat{\beta}_\bk$ and $\widehat{\sigma}_\bk$, defined
      as in the right-hand side of \eqref{eq:FS}.

      \begin{lemma}\label{lmmsmoothdecay}
      For $r \in \N$, there exists $C>0$ independent of $\gamma\geq 1$ such that
    \[
        \vert \widehat{\beta}_{\bk} \vert \leq C \Bigl(  \vert \widehat{\rho}(\bsomega_\bk) \vert \, +  \min\big\{ 1, \abs{\bsomega_{\bk}}^{-2r} \gamma^{-2r + d} \big\} \Bigr) \, , \quad \bk \in \oZ_N^d\,  ,  
      \]
      where $\widehat{\rho}$ is given by \eqref{FT}. 
        
      \end{lemma}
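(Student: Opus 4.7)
The plan is to exploit that $\phi_\gamma$ is supported in the Euclidean ball $\{\abs{\bx}\leq 3\gamma/4\} \subset (-\gamma,\gamma)^d$, so that $\beta=\rho\phi_\gamma$ can be identified with its extension by zero to all of $\R^d$; the periodic Fourier coefficient $\widehat{\beta}_\bk$ then coincides with the continuous Fourier transform $\widehat{\rho\phi_\gamma}(\bsomega_\bk)$. Writing $\rho\phi_\gamma = \rho - g$ with $g := \rho(1-\phi_\gamma)$ gives
\[
\widehat{\beta}_\bk = \widehat{\rho}(\bsomega_\bk) - \widehat{g}(\bsomega_\bk),
\]
and the claim reduces to showing $\abs{\widehat{g}(\bsomega_\bk)}\leq C\min\bigl\{1,\abs{\bsomega_\bk}^{-2r}\gamma^{d-2r}\bigr\}$.

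For the bound by $1$ I will simply use $\abs{\widehat{g}(\bsomega)}\leq \|g\|_{L^1(\R^d)} \leq \|\rho\|_{L^1(\R^d)}$; the latter is finite (and trivially $\gamma$-independent) by the exponential decay supplied by Lemma~\ref{lem-estimate}. For the algebraic bound in $\abs{\bsomega_\bk}$, the crucial point is that $1-\phi_\gamma$ vanishes identically on $\{\abs{\bx}\leq \gamma/2\}$, which kills the potential singularity of $\rho$ at the origin. Thus $g\in C^\infty(\R^d)$ with integrable derivatives, and $2r$-fold integration by parts yields
\[
\abs{\bsomega}^{2r}\abs{\widehat{g}(\bsomega)} \leq C_d \sum_{\abs{\alpha}=2r} \|\partial^\alpha g\|_{L^1(\R^d)},
\]
so it suffices to prove $\|\partial^\alpha g\|_{L^1(\R^d)} \leq C\gamma^{d-2r}$ for each multi-index $\alpha$ with $\abs{\alpha}=2r$.

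Expanding $\partial^\alpha g$ by the Leibniz rule produces terms of the form $\partial^{\alpha-\beta}\rho\cdot \partial^{\beta}(1-\phi_\gamma)$. The term with $\beta=0$ is supported on $\{\abs{\bx}\geq \gamma/2\}$, while each term with $\beta\neq 0$ is supported in the annulus $\{\gamma/2\leq\abs{\bx}\leq 3\gamma/4\}$ of measure $O(\gamma^d)$, on which $\abs{\partial^\beta\phi_\gamma}\leq C\gamma^{-\abs{\beta}}$. The key input, and the main technical obstacle, is a uniform exponential decay of every Cartesian derivative of $\rho$ at infinity: combining the chain rule with the radial identity of Lemma~\ref{lem-diff} for $\frac{d^n}{dt^n}\bigl(t^\nu K_\nu(\sqrt{2\nu}\,t/\lambda)\bigr)$ and the pointwise bound of Lemma~\ref{lem-estimate} yields $\abs{\partial^{\alpha-\beta}\rho(\bx)} \leq C(1+\abs{\bx})^{P} e^{-c\abs{\bx}/\lambda}$ on $\{\abs{\bx}\geq \gamma/2\}$, for some constant $P=P(\alpha,\beta,\nu,d)$ and constants $C,c>0$ depending only on $\nu,\lambda,d$. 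Since for $\gamma\geq 1$ an exponentially decaying factor in $\gamma$ dominates every polynomial in $\gamma$, each contribution to $\|\partial^\alpha g\|_{L^1(\R^d)}$ is bounded by $C\gamma^{d-2r}$, which completes the argument.
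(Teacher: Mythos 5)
Your proof is correct and takes a genuinely different route from the paper's. The paper works entirely in frequency space: it writes $\widehat{\beta}(\bsomega_\bk) = (2\pi)^{-d}(\widehat{\rho}*\widehat{\phi}_\gamma)(\bsomega_\bk)$ by the convolution theorem and splits the convolution integral at $|\bsxi| = |\bsomega_\bk|/2$. The outer region is controlled by $\|\widehat\phi_\gamma\|_{L^1}$ (scale invariant) times the radial monotone decay of $\widehat\rho$, giving the $|\widehat\rho(\bsomega_\bk)|$ term, while the inner region uses the rapid decay of $\widehat\phi_\gamma$ obtained by integration by parts in $\bx$ together with $\|\widehat\rho\|_{L^1}$, giving the second term; the argument uses only that $\widehat\rho\in L^1\cap L^\infty$ with suitable polynomial decay and never invokes the exponential spatial decay of $\rho$. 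You instead work in physical space: writing $\beta=\rho-g$ with $g=\rho(1-\phi_\gamma)$ and noting the periodic coefficient coincides with the continuous transform, you reduce to bounding $\widehat g$ by $L^1$-norms of $\partial^\alpha g$ for $|\alpha|=2r$, which you control by Leibniz' rule, the annular support of $\partial^\beta\phi_\gamma$, and the exponential decay of Cartesian derivatives of $\rho$ on $\{|\bx|\geq\gamma/2\}$ (essentially the same ingredients that drive Lemmas \ref{lem-estimate}, \ref{lem-diff}, and \ref{lmm:sigmaest}, though you need those derivative bounds for multi-indices of total order $2r$ rather than just $|\bsalpha|_\infty\leq 2$; this is a routine extension since $|\bx|\geq \gamma/2 \geq 1/2$ keeps derivatives of $|\bx|$ uniformly bounded). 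Your approach unifies the treatment of $\beta$ and $\sigma$ around the same physical-space decay estimates and even gives a marginally sharper bound (coefficient $1$ in front of $|\widehat\rho(\bsomega_\bk)|$), whereas the paper's convolution argument requires only Fourier-side information on $\rho$ and would therefore extend to covariances lacking exponential spatial decay. Both proofs are sound.
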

      
      \begin{proof}
       We have $\beta = \rho\phi_\gamma$, since $\phi_\gamma$ vanishes outside $[-\gamma, \gamma]^d$. By the convolution theorem,
    \begin{equation}\label{convolutionestimate}
    \begin{split} 
	 (2\pi)^{d} \big|\widehat\beta(\bsomega_\bk)\big| 
	 & = \abs{(\widehat\rho * \widehat{\phi}_\gamma)(\bsomega_\bk)} 
	 \\
	  &\leq \biggabs{ \int_{\abs{\bsxi}\leq \abs{\bsomega_\bk}/2} \widehat\rho(\bsxi) \widehat\phi_\gamma(\bsomega_\bk - \bsxi)\,\rd \bsxi } + \biggabs{ \int_{\abs{\bsxi}\geq \abs{\bsomega_\bk}/2} \widehat\rho(\bsxi) \widehat\phi_\gamma(\bsomega_\bk - \bsxi)\,\rd \bsxi }   .
	  \end{split}
\end{equation}
	The second term on the right can be estimated as
	  \begin{align}
	 \biggabs{ \int_{\abs{\bsxi}\geq \abs{\bsomega_\bk}/2} \widehat\rho(\bsxi) \widehat\phi_\gamma(\bsomega_\bk - \bsxi)\,\rd \bsxi } 
	 &  \leq \norm{ \widehat \phi_\gamma }_{L^1(\R^d)} \max_{\abs{\bsxi}\geq \abs{\bsomega_\bk}/2} \abs{\widehat\rho(\bsxi)} 
	 \nonumber \\
         & \leq C \norm{\widehat\phi_1}_{L^1(\R^d)} \abs{\widehat\rho(\bsomega_\bk/2)}
         \leq  C \abs{\widehat\rho(\bsomega_\bk) } \label{a}
	\end{align}
         with generic constants $C>0$ independent of $\gamma$, where we have used 
         \begin{equation}\label{L1invariance}
         \Vert \widehat{\phi}_\gamma\Vert_{L^1(\R^d)} = \Vert \widehat{\phi}_1\Vert_{L^1(\R^d)}, \qquad \gamma >0,
         \end{equation}
         and the decay properties of $\widehat\rho$ given in \eqref{FT}. 

          For the first term on the right of \eqref{convolutionestimate}, we use two separate bounds, suitable for  for small and large $\abs{\bsomega_\bk}$, to obtain 
          \[
             \biggabs{ \int_{\abs{\bsxi}\leq \abs{\bsomega_\bk}/2} \widehat\rho(\bsxi) \widehat\phi_\gamma(\bsomega_\bk - \bsxi)\,\rd \bsxi } \leq   \min\Bigl\{ \norm{\widehat\rho}_{L^\infty(\R^d)} \norm{\widehat\phi_\gamma}_{L^1(\R^d)}, \norm{ \widehat \rho }_{L^1(\R^d)} \max_{\abs{\bsxi}\geq \abs{\bsomega_\bk}/2} \abs{\widehat\phi_\gamma(\bsxi)} \Bigr\} .
          \]
          Now note that $\norm{\widehat\rho}_{L^\infty(\R^d)} \norm{\widehat\phi_\gamma}_{L^1(\R^d)}$ is uniformly bounded with respect to $\gamma$ by \eqref{L1invariance} and \eqref{FT}.
          Since $\phi_\gamma$ is smooth, integration by parts yields
            \[     \rev{     \widehat{\phi_\gamma}(\bsxi)    =  \abs{ \bsxi }^{-2r} \int_{\mathbb{R}^d}  \phi_\gamma(\bx)\,  (-\Delta_{\bx})^r\!
              \exp(- \ri \bsxi \cdot \bx) \, \rd \bx =  \abs{ \bsxi }^{-2r} \int_{\mathbb{R}^d} 
            \bigl[  (-\Delta)^r\! \phi_\gamma(\bx) \bigr] \, \exp(- \ri \bsxi \cdot\bx)\, \rd \bx     .   } 
              \] 
          Thus,  since $\widehat\rho \in L^1(\R^d)$, we have,    for any $r \in \N$,
          \begin{align}
           \norm{ \widehat \rho }_{L^1(\R^d)} \max_{\abs{\bsxi}\geq \abs{\bsomega_\bk}/2} \abs{\widehat\phi_\gamma(\bsxi)}  
            &  \leq C\abs{\bsomega_{\bk}}^{-2r} \int_{\R^d} \bigabs{ (-\Delta)^r \phi_\gamma(\bx)} \,\rd\bx \nonumber  \\
           &  \leq C\abs{\bsomega_{\bk}}^{-2r} \gamma^{-2r+ d}  \int_{\R^d} \bigabs{ (- \Delta)^r \phi_1(\bx)} \,\rd\bx 
             \leq   C\abs{\bsomega_{\bk}}^{-2r} \gamma^{-2r+ d} . 
         \label{b} \end{align}
Combining \eqref{a} and \eqref{b} with \eqref{convolutionestimate}   completes the proof.
         \end{proof}
         
For estimating $\abs{\widehat\sigma_\bk}$, we use the following auxiliary result, which is proved in the appendix.
         
         \begin{lemma}\label{lmm:sigmaest}
	Let $\bsalpha\in \Z^d$ with $|\bsalpha|_\infty \leq 2$ and $|\bx|\geq \gamma/2$ with $\gamma\geq 1 $. Then, with $\sigma$ as given in \eqref{eq:cecutoff_decomp}, there exists  $C$  independent of $\gamma$ such that
	\[
	  | \partial^{\bsalpha}\sigma(\bx)|
	   \leq  C e^{- \frac{\sqrt{2\nu}}{4{\lambda} }\gamma}  \bigg(\frac{\sqrt{2\nu}|\bx|}{\lambda}\bigg)^{\nu}e^{  -\frac{\sqrt{2\nu}|\bx|}{2\lambda}}\,,  \quad \text{for all} \quad \bx \in [-\gamma, \gamma]^d\backslash B_{\gamma/2}({0}).
	\] 
\end{lemma}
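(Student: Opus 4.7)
The plan is to exploit the fact that the classical periodization uses $\varphi = \Chi_{[-\gamma,\gamma)^d}$, so $\rhoext(\bx) = \rho(\bx)$ on the open fundamental cell $(-\gamma,\gamma)^d$; hence on the region of interest $\sigma(\bx) = \rho(\bx)(1-\phi_\gamma(\bx))$, and I would apply Leibniz's rule to write
\[
\partial^{\bsalpha}\sigma(\bx) = \sum_{\bsbeta \leq \bsalpha} \binom{\bsalpha}{\bsbeta}\,\partial^{\bsbeta}\rho(\bx)\,\partial^{\bsalpha-\bsbeta}(1-\phi_\gamma)(\bx),
\]
reducing the task to controlling the two factors separately for all $|\bsbeta|_\infty\leq 2$. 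For the cutoff factor, I would observe that $\phi_\gamma(\bx) = \phi(|\bx|/\gamma)$ and that all derivatives of $|\bx|$ are uniformly bounded on $\{|\bx|\geq \gamma/2\}$ when $\gamma\geq 1$; each use of the chain rule then contributes at most a factor $C/\gamma\leq C$, so $|\partial^{\bsalpha-\bsbeta}(1-\phi_\gamma)(\bx)|$ is uniformly bounded in $\bx$ and $\gamma$.

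The core work is the estimate of $|\partial^{\bsbeta}\rho(\bx)|$. I would write $\rho(\bx) = c_\nu\, g(t(\bx))$ with $g(t) = t^\nu K_\nu(t)$, $t(\bx) = \sqrt{2\nu}|\bx|/\lambda$, and $c_\nu = 2^{1-\nu}/\Gamma(\nu)$, and use the multivariate chain rule (Fa\`a di Bruno) to express $\partial^{\bsbeta}\rho$ as a linear combination of terms of the form $g^{(n)}(t(\bx))\,Q_{n,\bsbeta}(\bx)$, where $Q_{n,\bsbeta}$ is a polynomial in partial derivatives of $t$. Since $|\bx|\geq 1/2$, all derivatives of $|\bx|$ (and hence of $t$) are bounded on the region of interest, so $|Q_{n,\bsbeta}(\bx)|\leq C$ with $C$ depending on $\nu,\lambda$ but not on $\gamma$. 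Invoking Lemma~\ref{lem-diff} together with $K_{-\mu} = K_\mu$ and Lemma~\ref{lem-estimate}, for $t\geq 1/2$ I would obtain
\[
|g^{(n)}(t)| \leq C\sum_{j=0}^{\lfloor n/2\rfloor} t^{\nu-j-1/2}e^{-t} \leq C\,t^{\nu-1/2}e^{-t},
\]
and consequently $|\partial^{\bsbeta}\rho(\bx)| \leq C\,t(\bx)^{\nu-1/2}e^{-t(\bx)}$. The crucial manipulation is then $e^{-t} = e^{-t/2}\cdot e^{-t/2}$: using $|\bx|\geq \gamma/2$ in one copy yields $e^{-t(\bx)/2}\leq \exp(-\sqrt{2\nu}\gamma/(4\lambda))$, while absorbing $t^{-1/2}\leq \sqrt 2$ into the constant produces the claimed bound in this regime.

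The remaining case is $t(\bx)<1/2$, which can occur only if $\lambda>\sqrt{2\nu}$ and $|\bx|<\lambda/(2\sqrt{2\nu})$. I would handle it as a bounded-parameter regime: in this case $\gamma\leq 2|\bx|<\lambda/\sqrt{2\nu}$ and $t(\bx)\in[\sqrt{2\nu}/(2\lambda),\,1/2]$, so both sides of the target inequality lie between positive constants depending only on $\nu,\lambda$, and the estimate follows by enlarging $C$. The main obstacle I anticipate is the bookkeeping in the Fa\`a di Bruno expansion combined with the reduction of $\sum_j t^{\nu-j}K_{\nu-n+j}(t)$ to the single leading factor $t^{\nu-1/2}e^{-t}$; the splitting of $e^{-t}$ is the conceptual trick that converts the plain exponential decay of the \MAT kernel into the simultaneous prefactor $\exp(-\sqrt{2\nu}\gamma/(4\lambda))$ and the pointwise decay $\exp(-\sqrt{2\nu}|\bx|/(2\lambda))$ demanded by the lemma.
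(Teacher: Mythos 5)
Your proposal is correct and follows essentially the same route as the paper's proof: Leibniz to reduce to derivatives of $\rho$, the observation that derivatives of $r=|\bx|$ are bounded on $\{|\bx|\geq\gamma/2\}$ so the chain rule reduces to $\frac{\dx^n}{\dx t^n}\bigl(t^\nu K_\nu(t)\bigr)$, Lemma~\ref{lem-diff} to expand these, a bound $K_\mu(t)\leq C\,t^{-1/2}e^{-t}$ for the relevant orders, and finally the split $e^{-t}=e^{-t/2}\cdot e^{-t/2}$ with one factor absorbed into $e^{-\sqrt{2\nu}\gamma/(4\lambda)}$. The only cosmetic difference is in the last step: you invoke Lemma~\ref{lem-estimate} (which needs $t\geq 1/2$ and, strictly read, has a degenerate $\Gamma(\cdot)$ factor when $\nu-n+j$ vanishes) and then treat $t<1/2$ as a separate bounded-parameter case, whereas the paper bounds $K_\mu$ by a half-integer-order Bessel function via monotonicity and its explicit finite series, which works uniformly for $t\geq\sqrt{2\nu}/(2\lambda)$ and avoids both the degeneracy and the extra case split; both variants give the same result since the constant is allowed to depend on $\nu,\lambda$.
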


\begin{lemma}\label{lmmkinkdecay}
Let $1/2 \leq \nu < \infty$. Then there exists $C>0$ independent of $\gamma \geq 1$ such that
	\begin{align}\label{L}
          \abs{\widehat\sigma_\bk} \leq C   \exp\left(-{L}\gamma \right)  \prod_{i=1}^d \min\bigl\{1, \abs{\bsomega_{\bk,i}}^{-2} \bigr\} \,,\quad  {\text{for all}
             \quad \bk \in \oZ_N^d,  \quad \text{where} \quad  {L}:= \frac{\sqrt{2\nu}}{4 \lambda}} \,.
	\end{align} 
\end{lemma}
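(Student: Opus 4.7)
The plan is to establish the stronger bound
\[
\bigg(\prod_{i=1}^d\bigl(1+\bsomega_{\bk,i}^2\bigr)\bigg)\bigl|\widehat\sigma_\bk\bigr|\leq Ce^{-L\gamma},
\]
from which the claim of the lemma follows via $(1+\omega^2)^{-1}\leq \min\{1,\omega^{-2}\}$. Expanding the product as $\sum_{\bsalpha\in\{0,2\}^d}|\bsomega_{\bk}^{\bsalpha}|$, it suffices to prove $|\bsomega_{\bk}^{\bsalpha}\,\widehat\sigma_\bk|\leq C_{\bsalpha}\, e^{-L\gamma}$ separately for each $\bsalpha\in\{0,2\}^d$. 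The case $\bsalpha=0$ is immediate from Lemma \ref{lmm:sigmaest} applied with $\bsalpha=0$: since $\sigma$ vanishes on $B_{\gamma/2}(0)$ (as $\phi_\gamma\equiv 1$ there) and the profile $(\sqrt{2\nu}|\bx|/\lambda)^\nu e^{-\sqrt{2\nu}|\bx|/(2\lambda)}$ is integrable over $\R^d$, one obtains $|\widehat\sigma_\bk|\leq \int_{|\bx|\geq \gamma/2}|\sigma(\bx)|\,d\bx\leq Ce^{-L\gamma}$.

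For $\bsalpha\neq 0$, let $S=\{i\colon \alpha_i=2\}$. I would substitute $\bsomega_{\bk,i}^2e^{-\ri\bsomega_{\bk,i}x_i}=-\partial_{x_i}^2 e^{-\ri\bsomega_{\bk,i}x_i}$ for each $i\in S$ and then integrate by parts twice in each such coordinate, transferring the derivatives onto $\sigma$. Schematically, this yields a ``volume'' term $\pm\int_{[-\gamma,\gamma]^d}\partial^\bsalpha\sigma(\bx)\,e^{-\ri\bsomega_\bk\cdot\bx}\,d\bx$ plus a finite collection of boundary contributions supported on sub-faces $\{x_i=\pm\gamma,\ i\in T\}$ for various $T\subseteq S$.

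The crucial observation is that every ``zeroth-order-in-$i$'' boundary term --- any term in which some $\partial^\bsbeta\sigma$ with $\beta_i=0$ is evaluated at $x_i=\pm\gamma$ --- cancels. Indeed, in a neighbourhood of each such face one has $|\bx|\geq\gamma>3\gamma/4$, so $\phi_\gamma\equiv 0$ and $\sigma=\rho$ there; because $\rho$ is radially symmetric, $\partial^\bsbeta\sigma$ with $\beta_i=0$ depends on $x_i$ only through $|\bx|$ and hence takes the same value at $x_i=\gamma$ and $x_i=-\gamma$. Combined with $e^{-\ri\bsomega_{\bk,i}\gamma}=e^{\ri\bsomega_{\bk,i}\gamma}$ (since $\bsomega_{\bk,i}\gamma\in\pi\Z$), the corresponding boundary jump vanishes. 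The remaining ``first-order-in-$i$'' boundary terms carry an extra $\partial_{x_i}$ and reduce (by oddness in $x_i$) to twice the trace at $x_i=\gamma$, which is small because Lemma \ref{lmm:sigmaest} applies on the face with $|\bx|\geq\gamma$ and $|\bsbeta|_\infty\leq 2$. When a further IBP is performed in a direction $j\in S$ on one of these surviving boundary integrals, the very same evenness/oddness analysis in $x_j$ shows that the new zeroth-order-in-$j$ contribution cancels while the new first-order-in-$j$ contribution still lies within the regime of Lemma \ref{lmm:sigmaest}.

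Applying Lemma \ref{lmm:sigmaest} pointwise to every surviving term (the volume term and each multi-face first-order trace all involve $\partial^\bsbeta\sigma$ with $|\bsbeta|_\infty\leq 2$ on a region with $|\bx|\geq\gamma/2$) and integrating the exponential profile over the cube or over the relevant lower-dimensional face yields a bound of order $Ce^{-L\gamma}$ for each; summing the finitely many contributions and then summing over $\bsalpha\in\{0,2\}^d$ completes the argument. The main obstacle is the bookkeeping of the iterated IBP when $|S|>1$: one must check carefully that the $x_i$-evenness property used for each zeroth-order cancellation is preserved under further differentiation in the remaining coordinates, and that no surviving trace ever requires derivatives of $\sigma$ of order greater than two in any single coordinate (the regime in which Lemma \ref{lmm:sigmaest} is stated). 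A systematic stage-by-stage decomposition of the IBP output into cancelling zeroth-order and surviving first-order contributions at each step keeps this combinatorics under control.
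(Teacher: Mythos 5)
Your proposal is correct and takes essentially the same approach as the paper: two-fold integration by parts in the relevant coordinate directions, followed by the pointwise estimates of Lemma~\ref{lmm:sigmaest} applied to the surviving volume and face integrals. The only superficial differences are that you absorb the paper's case split on $\abs{\bsomega_{\bk,i}}\leq 1$ versus $\abs{\bsomega_{\bk,i}}>1$ into a multiplication by $\prod_i\bigl(1+\bsomega_{\bk,i}^2\bigr)$, and that you justify the vanishing of the lower-order boundary contributions via radial evenness of $\rho$ on the faces, where the paper appeals (somewhat tersely) to the periodicity of $\rhoext$ --- two equivalent observations here, since $\sigma$ coincides with $\rho$ in a neighbourhood of the faces.
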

      
  \begin{proof}  First we bound $\sigma_{\bk}$ in terms of $\sigma$ and its derivatives.
  When $\max_i \abs{\bsomega_{\bk,i}} \leq 1$ we use the estimate
  \begin{equation}\label{sigma1}
  \begin{aligned} 
  \abs{\widehat\sigma_\bk } =\bigg| \int_{[-\gamma,\gamma]^d } \sigma(\bx) e^{-\ri \bsomega_\bk \cdot \bx}\,\rd\bx \bigg|
  &\leq \int_{[-\gamma,\gamma]^d } |\sigma(\bx) |\,\rd\bx \,.
  \end{aligned}
  \end{equation} 

 For the case where $\abs{\bsomega_{\bk,i}} > 1$ for at least one value of $i$, we integrate by parts dimensionwise to best exploit the limited smoothness of $\sigma$ across the boundary of $[-\gamma,\gamma]^d$. We assume without loss of generality that $\abs{\bsomega_{\bk,1}} > 1$ and we just give the proof for $d=2$ to simplify the exposition; higher dimensions are analogous.
 Integration by parts {in} \eqref{sigma1} {twice} with respect to $x_1$ gives
  \begin{align} 
&    | \widehat\sigma_\bk |  =  \abs{\bsomega_{\bk,1}}^{-1}\bigg| \int_{[-\gamma,\gamma]^2} \partial_{x_1} \sigma(\bx) e^{- \ri \bsomega_\bk \cdot \bx}\,\rd\bx \bigg|  \nonumber \\
     & \ \  =  \abs{\bsomega_{\bk,1}}^{-2} \bigg|\int_{-\gamma}^\gamma \biggl( \int_{-\gamma}^\gamma  \partial^2_{x_1} \sigma(\bx) e^{- \ri \bsomega_{\bk,1}  x_1}\,\rd x_1 
      - \bigl[ \partial_{x_1} \sigma(\bx)  e^{- \ri \bsomega_{\bk,1}  x_1} \bigr]_{x_1 = -\gamma}^\gamma \biggr)e^{-\ri \bsomega_{\bk,2}x_2}  \,\rd x_2 \bigg| .  \label{sigma2}
  \end{align}
  Now denoting
   \[
 \begin{aligned}
 \tilde \sigma_1(x_2) = \int_{-\gamma}^\gamma  \partial^2_{x_1} \sigma(\bx) e^{- \ri \bsomega_{\bk,1}  x_1}\,\rd x_1 
 - \bigl[ \partial_{x_1} \sigma(\bx)  e^{- \ri \bsomega_{\bk,1}  x_1} \bigr]_{x_1 = -\gamma}^\gamma \,  ,
 \end{aligned}
 \]
 we get $\tilde \sigma_1(\gamma)= \tilde \sigma_1(-\gamma)$ by the periodicity of $\sigma(\bx)$ as a function of $x_2$.  
 If, in addition,  $\abs{\bsomega_{\bk,2}} \geq 1$, then integrating by parts with respect to $x_2$  gives
  \begin{align} 
     \abs{\widehat\sigma_\bk }
      & =  \abs{\bsomega_{\bk,1}}^{-2}\abs{\bsomega_{\bk,2}}^{-2} \biggl|\int_{-\gamma}^\gamma \partial_{x_2}^2\tilde \sigma_1(x_2) e^{-\ri \bsomega_{\bk,2}x_2}\,\rd x_2 - \bigl[ \partial_{x_2} \tilde\sigma_1(x_2)  e^{-\ri \bsomega_{\bk,2}x_2} \bigr]_{x_2 = -\gamma}^\gamma \biggr| \nonumber \\
      & = \abs{\bsomega_{\bk,1}}^{-2}\abs{\bsomega_{\bk,2}}^{-2} \bigg| \int_{[-\gamma,\gamma]^2} \partial_{x_1}^2 \partial_{x_2}^2 \sigma(\bx) e^{-\ri \bsomega_\bk\cdot \bx}\,\rd\bx    - \biggl[ \int_{-\gamma}^{\gamma} \partial_{x_2} \partial^2_{x_1} \sigma(\bx) e^{-\ri \bsomega_\bk\cdot \bx} \,\rd x_1 \biggr]_{x_2=-\gamma}^\gamma
      \nonumber \\
        & \quad \ - \int_{-\gamma}^\gamma  \bigl[ \partial_{x_1}\partial_{x_2}^2 \sigma(\bx)  e^{-\ri \bsomega_\bk\cdot \bx} \bigr]_{x_1 = -\gamma}^\gamma \,\rd x_2 + \Bigl[ \bigl[ \partial_{x_1}\partial_{x_2} \sigma(\bx)  e^{-\ri \bsomega_\bk\cdot \bx} \bigr]_{x_1 = -\gamma}^\gamma  \Bigr]_{x_2 = -\gamma}^\gamma      
        \bigg|.
\label{sigma3} \end{align}
To estimate the right-hand sides of \eqref{sigma1}, \eqref{sigma2} and \eqref{sigma3},
we use Lemma \ref{lmm:sigmaest}, which directly gives the desired bound for the last term on the  right-hand side of \eqref{sigma3}. Moreover,
\[
\begin{split}
 \int_{[-\gamma,\gamma]^2}| \partial^{\bsalpha}\sigma(\bx)| \rd\bx  
&
\leq C
e^{- \frac{\sqrt{2\nu}}{4{\lambda}}\gamma}\int_{\abs{\bx} > \gamma/2} 	  \bigg(\frac{\sqrt{2\nu}|\bx|}{\lambda}\bigg)^{\nu}e^{  -\frac{\sqrt{2\nu}|\bx|}{2\lambda}}\, \rd \bx 
\leq C e^{- \frac{\sqrt{2\nu}}{4{\lambda}}\gamma}\,,
\end{split}
\]
which is the required bound for  
 \eqref{sigma1} and the first terms on the right-hand sides of \eqref{sigma2} and \eqref{sigma3}. Finally, for the second term on the right hand side of \eqref{sigma3} we have with $\bsalpha=(1,2)$
\[
\begin{split}
\biggl[ \int_{-\gamma}^{\gamma} \partial^{\bsalpha}\sigma ({\bx}) e^{-\ri \bsomega_\bk\cdot \bx} \,\,\rd x_1 \biggr]_{x_2=-\gamma}^\gamma
 &
 \leq  
 \bigg[ \int_{-\gamma}^{\gamma} \big|\partial^{\bsalpha}\sigma(\bx) \big|\,\rd x_1\bigg]_{x_2=\gamma} 
 + 
  \bigg[ \int_{-\gamma}^{\gamma} \big|\partial^{\bsalpha}\sigma(\bx) \big|\,\rd x_1\bigg]_{x_2=-\gamma}
\\
&
\leq 
C e^{- \frac{\sqrt{2\nu}}{4{\lambda} }\gamma} \int_{-\gamma}^{\gamma}  \bigg(\frac{\sqrt{2\nu(x_1^2+\gamma^2)}}{\lambda}\bigg)^{\nu}e^{  -\frac{\sqrt{2\nu(x_1^2+\gamma^2)} }{2\lambda}}\rd x_1
\\
&
\leq 
C  e^{  -\frac{\sqrt{2\nu}  }{4\lambda}\gamma}   \,.
\end{split}
\]
Carrying this out in the same way for further other terms we obtain the desired result for $d=2$.
      The proof for $d>2$ can be done   analogously, giving rise to $2^d$ terms in \eqref{sigma3}. 
  \end{proof}     

Based on these preparations, we now complete the proof of Theorem \ref{thm:conjecture}.

  \begin{proof}[Proof of Theorem \ref{thm:conjecture}.]
  We estimate the two terms on the right hand side of \eqref{eq:conjecturestart}.
  For the first term, combining Lemmas \ref{lmmsmoothdecay} and \ref{lmmkinkdecay} yields
 \begin{align} \label{E} 
   |\widehat{\rho}_{\bk} |\leq C \bigg( \abs{ \widehat{\rho}(\bsomega_\bk) } \, +  \min\big\{ 1, \abs{\bsomega_{\bk}}^{-2r} \gamma^{-2r + d} \big\} + e^{-{L}\gamma }  \prod_{i=1}^d \min\big\{1, \abs{\bsomega_{\bk,i}}^{-2} \big\} \bigg)
 \end{align} 
  with $C>0$ independent of $\gamma$ and $r \in \N $ arbitrary. We choose fixed  $r \geq \nu + d/2$. For the first term on the right of \eqref{E},
using \eqref{FT} with $\bsomega_\bk = \pi \bk /\gamma$ gives
  \[
      \abs{ \widehat{\rho}(\bsomega_\bk) } \leq C  \gamma^{2 \nu + d} ( 1+  \abs{\bk} )^{-(2\nu + d)} .
  \]
  Moreover, the second term of \eqref{E} can be estimated by
  \begin{align}\label{F}
    \min\big\{ 1, \abs{\bsomega_{\bk}}^{-2r} \gamma^{-2r + d} \big\}
      \leq  \gamma^d ( 1+ \abs{\bk})^{-2r} \leq  \gamma^{d}  ( 1+  \abs{\bk} )^{-(2\nu + d)}, \quad \bk \in \mathbb{Z}^d. 
    \end{align} 
To see the first inequality in \eqref{F}, consider $\bk = {0}$ and $\bk \not= {0}$ separately. In the latter case the inequality follows from the elementary estimate \rev{$\pi \vert \bk \vert \geq (1 + \vert \bk \vert$).}
  Estimating the remaining term of \eqref{E} in a similar way, we obtain
  \begin{equation}\label{rhohatestimate}
   | \widehat{\rho}_{\bk} | \leq  C \bigg( \gamma^{2 \nu + d} ( 1+  \abs{\bk} )^{-(2\nu + d)}
 + e^{-{L}\gamma } \gamma^{2d} \prod_{i=1}^d (1 +\abs{\bk_i})^{-2}  \bigg) \,.
\end{equation}
The second term on the right-hand side of \eqref{rhohatestimate} will turn out to be dominated by the first. But to finish the argument
  we also have to estimate the second term on the right-hand side of \eqref{eq:conjecturestart}.
  To do this note that 
  since $\max_{i=1,\ldots,d} \abs{\bk_i} \leq N/2$ for $\bk \in {\bbZbar}_{N}^d$, and $\bm \in \mathbb{Z}^d$, we have
  \[
  \big(1+|\bk+N\bm|\big)^{-1} \leq C \big(1+N|\bm|\big)^{-1}\quad\text{and}\quad   \big(1+|k_i+Nm_i|\big)^{-1} \leq C \big(1+N|m_i|\big)^{-1}\,,
  \]
  for $\bm\in \Z^d$ with $C$ independent of $\bk$ and $N$. Thus, we get from \eqref{rhohatestimate} 
  \begin{align}
  \bigg|  \sum_{{0} \neq \bm \in \bbZ^d} \widehat{\rho}_{\bk + N \bm }\bigg|
      &
       \leq C 
       \Bigg( {\gamma^{2 \nu + d}}\sum_{{0} \neq \bm \in \bbZ^d} ( 1 + N \abs{\bm})^{-(2\nu + d)}   
     \nonumber    \\
                & \quad\qquad
        +\ e^{-{L}\gamma } \gamma^{2d}  \sum_{{0} \neq \bm \in \bbZ^d}\prod_{i=1}^d  \big(1 + N\abs{m_i}\big)^{-2}  
        \Bigg) \,.
    \label{H} \end{align}
Now, by elementary arguments,
\[
\begin{split} 
\sum_{{0} \neq \bm \in \bbZ^d}\prod_{i=1}^d  \big(1 + N\abs{m_i}\big)^{-2} 
&  =  \sum_{  \bm \in \bbZ^d}\prod_{i=1}^d  \big(1 + N\abs{m_i}\big)^{-2} -1
\\
  &  =     \bigg( \sum_{m \in \Z} \big(1 + N\abs{m}\big)^{-2}\bigg)^d  - 1
           \leq   \Bigl( 1 + \frac{\pi^2}{3} N^{-2}\Bigr)^d - 1  . 
 \end{split}
        \] 
 Inserting this into the second term of \eqref{H}, and estimating the first term similarly, we obtain
  \begin{equation} \label{rhohatestimate-2}
 \begin{aligned}    
\bigg|  \sum_{{0} \neq \bm \in \bbZ^d} \widehat{\rho}_{\bk + N \bm }\bigg|   
      & \leq C \Big( \gamma^{2 \nu + d}N^{-(2\nu + d)} + e^{-{L}\gamma} \gamma^{2d} N^{-2} \Big) \,.
  \end{aligned}
\end{equation}

We now  show that the first  term in  \eqref{rhohatestimate}  is dominant in both estimates   \eqref{rhohatestimate}, \eqref{rhohatestimate-2}.
First note  that, by elementary arguments,  
  $
    \prod_{i=1}^d (1 +\abs{\bk_i})^{2} \ \geq \ 1 + \vert \bk \vert^2 \ \geq \ \frac12 (1 + \vert \bk \vert)^2 $.
Then, for $\bk \in \oZ_N^d$,  we have  
  \begin{align}\label{61}
    \bigg(\prod_{i=1}^d (1 +\abs{\bk_i})^{-2} \bigg)  \left(1 + \vert \bk\vert \right)^{2 \nu + d}
    \ \leq \ C  \left(1 + \vert \bk\vert \right)^{2 \nu + d-2} \ \leq \ C N^{2 \nu + d-2}
  \end{align}
Now by choice of $\gamma$ and \eqref{eq:alphahsmall}, we have $\gamma \, \geq\, \gamma^* \, > \,  C_2 \lambda \sqrt{\nu} \log(\lambda/h)$, and using the definition of $L$ in  \eqref{L}, we obtain \begin{align} \label{62}  L \gamma \ \geq \ \frac{C_2\nu }{2\sqrt{2}}  \log(\lambda/h) \ = \
  \log((\lambda/h)^{C_2 \nu/2 \sqrt{2}})\, . 
\end{align} 
Then, combining \eqref{61} and \eqref{62} and recalling that  $h = 2\gamma/N$,  we obtain  
\begin{align}   e^{-L\gamma} \gamma^{2d} \prod_{i=1}^d (1 +\abs{\bk_i})^{-2} \
  & \leq\  C \left[h^{C_2 \nu/2\sqrt{2}} \, \gamma^{d - 2\nu}\,  N^{2 \nu + d -2} \right] \, \gamma^{2 \nu + d} (1 + \vert \bk \vert)^{-(2 \nu + d)}\nonumber  \\  & = C \left[h^{C_2 \nu/2\sqrt{2}- 2\nu -d +2 } \, \gamma^{2d-2} \right] \, \gamma^{2 \nu + d} (1 + \vert \bk \vert)^{-(2 \nu + d)} . \label{63}
\end{align}
By choice of $C_2$, and using $\nu \geq 1/2$,   we have $C_2 > 2 \sqrt{2}  ( 2 + (d-2)/\nu)  $ and so the exponent of $h$ in \eqref{63} is positive. Also since $\gamma \leq a \gamma^*$, $\gamma$ grows at most logarithmically in $h$ with a multiplicative constant which grows at most linearly in $a$. This yields a bound on the second term in \eqref{rhohatestimate} and thus
\begin{align*} 
  \vert \widehat{\rho}_{\bk} \vert \ \leq \ C \, a^{2d -2}\,  \gamma^{2 \nu + d}(1+ \vert \bk \vert)^{-(2 \nu + d)}. 
\end{align*}
Turning to the second term on the right-hand side of \eqref{rhohatestimate-2} we obtain,  similarly,
\begin{equation} \label{64} 
\begin{aligned}
  e^{-L\gamma} \gamma^{2d} N^{-2} \ &\leq \ C  \left[h^{C_2 \nu/2 \sqrt{2}} \,  \gamma^{d- 2 \nu } \, N^{2 \nu + d -2}\right] \,    \gamma^{2 \nu + d} \, N^{-(2 \nu + d)}   \\
   &\leq \ C  \, a^{2d-2} \, \gamma^{2 \nu + d} \, (1 + \vert \bk \vert)^{-(2 \nu + d)},  
   \end{aligned}
   \end{equation}
when $\bk \in \oZ_N^d$. 
Inserting \eqref{63} and \eqref{64} into \eqref{rhohatestimate} and \eqref{rhohatestimate-2}, we obtain
 \begin{align} \label{65}
   (S_N\rho)_{\bsk}  \leq C\,  a^{2d-2} \, \gamma^{2\nu+ d }\,  \big( 1+  \abs{\bk} \big)^{-(2\nu + d)}, \quad \bk \in {\bbZbar}_{N}^d.
   \end{align}

Now, to finish the proof, let $\{\abs{\bk^*_j}: \ j = 1,\ldots,N^d\}$, be a  non-decreasing ordering of
  the numbers $\{\abs{\bk}: \, \bk \in \oZ_N^d\}$. As shown in \cite[Theorem 3.4]{GKNSS}, $\abs{\bk^*_j}$ is then  uniformly proportional to $j^{1/d}$, with constants that depend only on $d$.  Thus by \eqref{65}, 
  \begin{equation}\label{eq:estordered}
     (2\gamma)^{-d} (S_N\rho)_{\bsk^*_j}  \leq C a^{2d-2} \, \gamma^{2\nu} \, j^{-(2\nu/d + 1)}, \quad  j = 1,\ldots,N^d.
   \end{equation}
   Now by the hypothesis of the theorem, the numbers $\lambda^*_j := N^{-d} \Lambda^\ext_j$, $j = 1, \ldots, N^d$, are non-increasing and, by Proposition \ref{prop:eigs}(ii), provide a non-increasing ordering of the values  $(2\gamma)^{-d} (S_N\rho)_{\bsk}$,  $\bk \in {\bbZbar}_{N}^d$.  Then we claim that,  for any integer $0<J<N^d$,
  \begin{align} \label{66}
      \sum_{j > J} \lambda^*_j \ \leq \  C \, a^{2d-2} \, \sum_{n> J}  \gamma^{2\nu} n^{-(2\nu/d + 1)},
    \end{align} 
    with $C$ as in \eqref{eq:estordered}. If this were not true then, by \eqref{eq:estordered}, and for some $J$, 
    \[\sum_{j > J} \lambda^*_j  \ > \ \sum_{n>J}   (2\gamma)^{-d} (S_N\rho)_{\bsk^*_n}\, .   \]
    Since the terms in the right-hand sum also provide an ordering for the eigenvalues $\lambda_j^*$ this  
    contradicts the assumed non-increasing property of $\lambda^*_j$. As a consequence of \eqref{66}, we then have
  \[
  \begin{aligned}
   N^{-d} \Lambda^\ext_j   \ = \  
   \lambda^*_j  & \ \leq \ \frac{2}{j} \sum_{n = \floor{j/2} + 1 }^{j} \lambda^*_n \ \leq\  \frac{2}{j} \sum_{n>\floor{j/2}} \lambda^*_n 
       \\
       & \leq  \ C \, a^{2d-2}\, \gamma^{2\nu}\,   \frac{2}{j}\,   \sum_{n>\floor{j/2}}  n^{-(2\nu/d + 1)} \ \leq\  C\,  a^{2d-2} \,  \gamma^{2\nu} \,  j^{-(2\nu/d + 1)} .
  \end{aligned}
\]
With the condition \eqref{eq:alphahsmall} on $\gamma$,
implying that  $\gamma \leq C a \log(\lambda/h)$,  this completes the proof.
  \end{proof}

\section{Smooth Periodization}\label{sec:smooth}

We now establish a sufficient criterion on the periodization cell size $\gamma$ ensuring a positive definite periodic covariance function in the case of periodization with a smooth cutoff function as in \eqref{ex2}.
This amounts to proving a quantitative version of \cite[Theorem 2.3]{BCM} for the case of Gaussian random fields with Mat\'ern covariance as in \eqref{eq:materndef}.

We explicitly construct a suitable even cutoff function $\varphi_\kappa$ which vanishes outside $B_{\kappa}({0})$ such that $\varphi_\kappa = 1$ on $[-1,1]^d$ and  $\rho_{\lambda,\nu}\varphi_\kappa$ is a positive definite function. In this case, for $\gamma$ sufficiently large, there exists a periodic Gaussian random field on the torus $[-\gamma, \gamma]^d$ with the periodized covariance kernel
\begin{equation}\label{eq:smperiod}
   \rho^\ext(\bx) = \sum_{\bn \in \Z^d} (\rho_{\lambda,\nu}\varphi_\kappa)(\bx + 2\gamma \bn),
\end{equation}
such that $\rho^\ext = \rho_{\lambda,\nu}$ on $[-1,1]^d$, so that the corresponding random fields have the same law on the domain of interest contained in $[-\frac12,\frac12]^d$.
As shown in \cite[\S 3]{BCM}, if $\varphi_\kappa$ is sufficiently smooth, the eigenvalues of the covariance operator of the periodized random field then have the same asymptotic decay as those of the corresponding Mat\'ern covariance operator.

The cutoff function defined here is different from $\phi$ used in Section \ref{sec:ce}, which served only as a tool in the proof of Theorem \ref{thm:conjecture}. By contrast, the \rev{cutoff function $\varphi_\kappa$ on $\R^d$ derived from a univariate cutoff function $\varphi$} constructed here is used numerically in the computation of the random field. \rev{In order to cover the full range of Mat\'ern smoothness parameters $\nu>0$ in our analytical results, we need precise control of high-order derivatives of $\varphi$. Specifically, for $p:= \lceil \nu +\frac{d}{2}\rceil$ we require bounds of the form
\begin{equation}\label{eq:cutoffest}
  \sup_{t\in \R}\big|\varphi^{(\alpha)}(t)\big| \leq c_1 \bigg(\frac{c_2 p}{\kappa}\bigg)^\alpha, \quad
    \alpha = 0,\ldots, 2p,
\end{equation}
with some $c_1, c_2 > 0$. The use of such $\varphi$ mainly allows us to circumvent some further major technicalities in our proofs, and as the numerical tests in Section \ref{sec:num} show, one still observes similar results for cutoff functions for which no bound of the form \eqref{eq:cutoffest} is available.}

\rev{Our concrete choice of $\varphi$ is as follows:} let $N_P$ be the B-spline function with nodes $\{- P,\ldots,-1,0 \}$, where $P:=2p+1$. For $\kappa>0$ we define the even function $\varphi\in C^{2p}(\R)$ by
\begin{equation}\label{eq:bspline1}
\varphi(t)=\begin{cases}
	1 & \text{if}\ \  |t|\leq \kappa/2\\[3pt]
	\displaystyle \frac{2 P}{\kappa}\int_{-\infty}^{t+\kappa/2} N_{P}\biggl(\frac{2 P}{\kappa}\xi\biggr)\dx\xi & \text{if}\ \ t\leq -\kappa/2\,.
\end{cases}
\end{equation}
It is easy to see that $\varphi(t)=0$ if $|t|\geq \kappa $. 
\rev{This choice of $\varphi$ provides us with explicit bounds of the form \eqref{eq:cutoffest} on all required derivatives.}
From $N'_{r+1}(t)=N_r(t)-N_r(t-1)$ we infer that for $0\leq \alpha \leq 2p$,
\begin{equation}\label{k--01}
\sup_{t\in \R}\big|\varphi^{(\alpha)}(t)\big| \leq 2^{\alpha} \bigg(\frac{2 P}{\kappa}\bigg)^\alpha \,.
\end{equation}
We now define 
\begin{equation}\label{eq:bsplined}
 \varphi_\kappa(\bx):=\varphi(|\bx|)\qquad \text{and} \qquad 
\theta_\kappa = 1-\varphi_\kappa,\qquad \bx\in \R^d.
\end{equation}
With this choice of $\varphi_\kappa$ in \eqref{eq:smperiod}, we have $\rho^\ext = \rho_{\lambda,\nu}$ on $[-1,1]^d$ provided that 
\begin{equation}\label{eq:++}
 \gamma \geq \frac{\kappa + \sqrt{d}}{2},
\end{equation}
which reduces to the condition in \eqref{cutoff} when $d=1$.

\rev{In our following main result, we pursue} the basic strategy of \cite[Theorem 2.3]{BCM} to establish sufficient conditions in terms of $\nu,\lambda$ on the required value of $\kappa >0$ such that
\begin{equation} \label{k-00}
\widehat{\rho_{\lambda,\nu}\varphi_\kappa}(\bsomega) = \widehat{\rho_{\lambda,\nu}}(\bsomega) -\widehat{\rho_{\lambda,\nu}\theta_\kappa}(\bsomega)  >0\,,\quad \bsomega \in \R^d.
\end{equation}
\rev{In \cite{BCM}, for a more general class of covariance functions than considered here, only the existence of such $\kappa$ is established without further information on its size. The proof uses the integrability of derivatives of $\rho_{\lambda,\nu}\theta_\kappa$ to show that $\widehat{\rho_{\lambda,\nu}\theta_\kappa}$ decays at least as fast as $\widehat{\rho_{\lambda,\nu}}$, and then uses the exponential spatial decay of these derivatives to show that $\widehat{\rho_{\lambda,\nu}\theta_\kappa}$ can indeed be bounded by $\widehat{\rho_{\lambda,\nu}}$ if $\kappa$ is chosen sufficiently large. 
Here, we follow the same basic strategy, but extract information on the required size of $\kappa$. This needs detailed information on higher-order derivatives of $\rho_{\lambda,\nu}$ and $\theta_\kappa$, where the order increases with the value of $\nu$. The proof of Theorem \ref{thm:matern-growth} for the classical periodization relies in a similar manner on using spatial decay of $\rho_{\lambda,\nu}$ to control a perturbation term, but does not require derivative information.}

\begin{theorem}\label{thm:smoothcond}
	For $d\in\{1,2,3\}$ and $\varphi_\kappa$ as defined above, there exist constants $C_1, C_2$ such that for any $0<\lambda,\nu<\infty$, we have $\widehat{\rho_{\lambda,\nu} \varphi_\kappa} > 0$ provided that $\kappa > 1$ and
	\begin{equation}\label{kappacondition}
	 	  \frac{\kappa}\lambda \geq C_1 + C_2 \max\Big\{\nu^{\frac12} ( 1 + \abs{\ln \nu}) , \nu^{-\frac12} \Big\}.
	\end{equation}
\end{theorem}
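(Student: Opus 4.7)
The plan is to establish the strict inequality $\widehat{\rho_{\lambda,\nu}\theta_\kappa}(\bsomega) < \widehat{\rho_{\lambda,\nu}}(\bsomega)$ for every $\bsomega \in \R^d$, so that the positivity of $\widehat{\rho_{\lambda,\nu}\varphi_\kappa}$ follows at once from the decomposition \eqref{k-00}. I would then split the frequency domain at a threshold $A$ proportional to $\sqrt{2\nu}/\lambda$, which is the natural frequency scale of the Mat\'ern spectral density \eqref{FT}, and treat low and high frequencies by different methods.

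For the low-frequency regime $|\bsomega|\leq A$, I would use the trivial bound
\[
\bigabs{\widehat{\rho_{\lambda,\nu}\theta_\kappa}(\bsomega)} \;\leq\; \norm{\rho_{\lambda,\nu}\theta_\kappa}_{L^1(\R^d)} \;\leq\; \int_{|\bx|\geq \kappa/2} \rho_{\lambda,\nu}(\bx)\,\rd\bx,
\]
since $\theta_\kappa$ vanishes on $B_{\kappa/2}(0)$. Passing to spherical coordinates and applying Lemma~\ref{lem-estimate} produces an estimate of the form $C \lambda^d \nu^{-d/2}\,q_d(\sqrt{2\nu}\kappa/\lambda)\, e^{-\sqrt{2\nu}\kappa/(2\lambda)}$, where $q_d$ is a polynomial of degree depending only on $d$. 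Comparing with the lower bound $\widehat{\rho_{\lambda,\nu}}(A) \gtrsim C_{\lambda,\nu}(2\nu/\lambda^2)^{-(\nu+d/2)} \sim \lambda^d \nu^{-d/2}$, obtained from \eqref{FT}--\eqref{eq:C}, the desired inequality follows as soon as the exponential factor dominates a fixed polynomial in $\sqrt{2\nu}\kappa/\lambda$; this constraint is what produces the $1/\sqrt\nu$ term in \eqref{kappacondition}, reflecting the weak exponential decay for small $\nu$.

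For the high-frequency regime $|\bsomega|>A$, I would integrate by parts $2p$ times with $p=\lceil\nu+d/2\rceil$, obtaining
\[
|\bsomega|^{2p}\,\bigabs{\widehat{\rho_{\lambda,\nu}\theta_\kappa}(\bsomega)} \;\leq\; \bignorm{(-\Delta)^{p}(\rho_{\lambda,\nu}\theta_\kappa)}_{L^1(\R^d)},
\]
and then expand the right-hand side by the Leibniz rule into terms of the form $\partial^{\bsalpha}\rho_{\lambda,\nu}\cdot\partial^{\bsbeta}\theta_\kappa$ with $|\bsalpha|+|\bsbeta|=2p$. Derivatives of the radial profile in $\rho_{\lambda,\nu}(\bx)$ would be evaluated by the Fa\`a di Bruno formula together with Lemma~\ref{lem-diff}, giving sums of terms $t^{\nu-j}K_{\nu-|\bsalpha|+j}(t)$ at $t=\sqrt{2\nu}|\bx|/\lambda$; the restriction $d\in\{1,2,3\}$ keeps the radial chain rule manageable and avoids extra low-order singularities. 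Derivatives of $\theta_\kappa=1-\varphi_\kappa$ are controlled by \eqref{k--01} with factor $(4P/\kappa)^{|\bsbeta|}$ where $P=2p+1$. On the support of each Leibniz term (either $\{|\bx|\geq\kappa/2\}$ for the $\beta=0$ contribution, or the annulus $\{\kappa/2\leq|\bx|\leq\kappa\}$ for the others), Lemma~\ref{lem-estimate} supplies the pointwise bound $|\partial^{\bsalpha}\rho_{\lambda,\nu}(\bx)|\lesssim e^{-\sqrt{2\nu}|\bx|/(2\lambda)}$ times a polynomial in $\sqrt{2\nu}|\bx|/\lambda$, which after integration yields an overall factor $e^{-\sqrt{2\nu}\kappa/(4\lambda)}$. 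Comparing with $|\bsomega|^{2p}\widehat{\rho_{\lambda,\nu}}(\bsomega)\gtrsim C_{\lambda,\nu}|\bsomega|^{2(p-\nu-d/2)}$, valid when $|\bsomega|\geq A$, reduces the whole bound to a simple expression in $\sqrt{2\nu}\kappa/\lambda$.

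The main obstacle is tracking the $\nu$-dependence of the combinatorial prefactors uniformly. The Leibniz sum contains on the order of $2^{2p}$ terms, Lemma~\ref{lem-diff} contributes $n!$, \eqref{k--01} contributes $(4P/\kappa)^{|\bsbeta|}$ with $P\sim\nu$, and the radial chain rule adds further $d$-dependent combinatorial factors. Absorbing all of these into the exponential $e^{-\sqrt{2\nu}\kappa/(c\lambda)}$ via Stirling is precisely what produces the $\sqrt\nu(1+|\ln\nu|)$ term in \eqref{kappacondition}: the inequality $p^p\lesssim e^{c\sqrt{2\nu}\kappa/\lambda}$, needed to dominate the factorials, forces $\kappa/\lambda\gtrsim\sqrt\nu\log\nu$. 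The most delicate bookkeeping is to ensure that a single pair of constants $C_1,C_2$ works uniformly across the transition between the small-$\nu$ regime (where $1/\sqrt\nu$ dominates) and the large-$\nu$ regime (where $\sqrt\nu(1+|\ln\nu|)$ dominates), and that the low- and high-frequency estimates match consistently at the threshold $A$.
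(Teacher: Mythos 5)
Your proposal is a genuine alternative route, not the one the paper follows. The paper does not split the frequency domain: instead it multiplies the target inequality through by $(2\nu+r^2)^p$ with $p=\lceil\nu+d/2\rceil$, expands binomially, and estimates each term $A_\ell(r)$ by integrating by parts $2\ell$ times \emph{in the radial variable} $t$ of the Hankel (radial Fourier) representation, using the Bessel-function recursions $\tfrac{\dx}{\dx z}[z^\alpha J_\alpha(z)]=z^\alpha J_{\alpha-1}(z)$ and $\tfrac{\dx}{\dx z}[z^{-\alpha}J_\alpha(z)]=-z^{-\alpha}J_{\alpha+1}(z)$ to push derivatives onto the radial profile $[\rho\theta](t)$ while keeping the Bessel order fixed; the resulting one-dimensional derivative expression is controlled by a dedicated lemma (Lemma~\ref{derivative}) and Lommel's bound on $J_\alpha$. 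Your plan, by contrast, integrates by parts in Cartesian coordinates, passing to $\|(-\Delta)^p(\rho\theta)\|_{L^1}$, and separately treats a low-frequency ball and a high-frequency complement. Conceptually both work, and your $\ell=0$/low-frequency split plays the same role as the paper's $\ell=0$ term $A_0(r)$, which is also handled by the plain $L^1$ bound.

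Where your route is more expensive than you acknowledge is the bookkeeping after the Cartesian Leibniz expansion. For the paper, $[\rho\theta]$ is a one-variable function of $t=|\bx|$, and only $t$-derivatives of $\rho$ (via Lemma~\ref{lem-diff}) and of $\theta$ (via the explicit B-spline bound \eqref{k--01}) appear; no chain-rule combinatorics is needed. In your Cartesian version you must convert $\partial^{\bsalpha}\rho(\bx)$ and $\partial^{\bsbeta}\theta_\kappa(\bx)$ to radial profile derivatives via Fa\`a di Bruno \emph{twice} (you only mention it once, for $\rho$), and then feed the resulting $\nu$-dependent and $d$-dependent coefficient growth into the Stirling balance. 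That is exactly the ``most delicate bookkeeping'' you flag as an open obstacle, so the proposal correctly identifies but does not discharge the hardest step. A cleaner variant of your own approach would be to observe that $\rho\theta$ is radial and compute $(-\Delta)^p$ directly in the radial Laplacian form $t^{1-d}(t^{d-1}g')'$, which is structurally close to the paper's $B$-operators and avoids Cartesian Fa\`a di Bruno entirely. Finally, the paper handles $\nu<1/2$ in a separate, essentially explicit computation because $p\le 2$ there; your proposal folds this into the same estimate, which is plausible but again not verified. In short: correct overall strategy, different and somewhat heavier technical route, with the key uniform-in-$\nu$ combinatorial estimates left to the reader.
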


Note that the condition \eqref{kappacondition}, together with \eqref{eq:++}, is similar to the one in Theorem \ref{thm:matern-growth}. There are two key differences: the restriction $\nu\geq \frac12$ does not appear, and since there is no discretization involved in Theorem \ref{thm:smoothcond}, there is no dependence on a grid size $h$ as in Theorem \ref{thm:matern-growth}. The restriction to the dimensionalities $d\in\{1,2,3\}$ that are relevant in applications is not essential, but allows us to avoid some further technicalities in the proof.

\begin{proof}
Note that since $ \widehat{\rho_{\lambda,\nu}\varphi_\kappa} (\bsomega) = \lambda^d \widehat{\rho_{1,\nu} \varphi_{\frac{\kappa}{\lambda}}}(\lambda\bsomega)$ for any $\lambda>0$,
if \eqref{k-00} holds with some $\kappa_1$ for $\rho_{1,\nu}$, then it also holds with $\kappa:=\lambda\kappa_1$ for $\rho_{\lambda,\nu}$. Consequently, it suffices to consider the case $\lambda = 1$ in what follows, and we write $\rho = \rho_{1,\nu}$ and $\theta = \theta_\kappa$. 
We consider the cases $  \frac{1}{2}\leq \nu<\infty$ and $0<\nu<\frac{1}{2}$ separately.

\medskip

\noindent\emph{Step 1. The case $\nu \geq \frac12$.}
With $r=|\bsomega|$, for the Fourier transform of the radial function $\rho\theta$ we have
\[
\begin{split} 
	\widehat{\rho\theta}(\bsomega)&=(2\pi)^{\frac{d}{2}}\int_{\kappa/2}^\infty [\rho\theta](t)(r t)^{-\frac{d}{2}+1}J_{\frac{d}{2}-1}(rt)t^{d-1}\dx t ,
\end{split}
\]
where $J_\alpha$ is the classical Bessel function of order $\alpha$. 
Now condition \eqref{k-00} with $\lambda=1$ is equivalent to
\begin{equation} \label{eq:eq}
C_{1,\nu}  \big( 2\nu +r^2 \big)^{-(\nu+\frac{d}{2})} \geq  (2\pi)^{\frac{d}{2}} \int_{\kappa/2}^\infty [\rho\theta](t)(r t)^{-\frac{d}{2}+1}J_{\frac{d}{2}-1}(rt)t^{d-1}\dx t, 
\end{equation}
where $C_{1,\nu}$ is given in \eqref{eq:C}. Since $\nu\geq \frac{1}{2}$ and $p\geq \nu+\frac{d}{2}$, it thus suffices to choose $\kappa$ such that
\begin{equation}\label{condlambda1}
C_{1,\nu} \geq (2\pi)^{\frac{d}{2}}\big(2\nu+r^2\big)^p \int_{\kappa/2}^\infty [\rho\theta](t)(r t)^{-\frac{d}{2}+1}J_{\frac{d}{2}-1}(rt)t^{d-1}\dx t,\quad \text{for all}\quad r \geq 0.
\end{equation}
 In what follows, as a consequence of \eqref{kappacondition} we can assume without loss of generality that
\[
\kappa   \frac{ \sqrt{2\nu}}{2\lambda}  =  \kappa\sqrt{\frac{\nu}{2}} \geq 2 \max\{ P, \nu d_1\},
\]
with $d_1=1+2(d-1)$. We now proceed to estimate
\[
\begin{split} 
	A(r) &:= (2\pi)^{\frac{d}{2}}\big(2\nu+r^2\big)^p \int_{\kappa/2}^\infty [\rho\theta](t)(r t)^{-\frac{d}{2}+1}J_{\frac{d}{2}-1}(rt)t^{d-1}\dx t\\
	&= (2\pi)^{\frac{d}{2}}\bigg[\sum_{\ell=0}^{p}\binom{p}{\ell}\big( 2\nu\big)^{p-\ell}r^{2\ell}\bigg] \int_{\kappa/2}^\infty [\rho\theta](t)(r t)^{-\frac{d}{2}+1}J_{\frac{d}{2}-1}(rt)t^{d-1}\dx t \\
	&= (2\pi)^{\frac{d}{2}}\sum_{\ell=0}^{p}\binom{p}{\ell} A_\ell (r),
\end{split}
\]
with
\begin{equation}\label{eq:al}
 A_\ell (r) : = \big( 2\nu\big)^{p-\ell}r^{2\ell} \int_{{\kappa}/{2}}^\infty [\rho\theta](t)(r t)^{-\frac{d}{2}+1}J_{\frac{d}{2}-1}(rt)t^{d-1}\dx t .
 \end{equation}
We consider the case $\ell\geq 1$. Using $\frac{\dx}{\dx z}\big[z^{\alpha }J_{\alpha }(z)\big]=z^{\alpha }J_{\alpha -1}(z)$, see \cite[page 45]{Wat}, we can write 
\[
 (r t)^{-\frac{d}{2}+1}J_{\frac{d}{2}-1}(rt)t^{d-1} =r^{-d} \frac{\dx}{\dx t} \Big[ (rt)^{\frac{d}{2}}J_{\frac{d}{2}}(rt)\Big]  \,.
\]
Integrating by parts, we get from   $[\rho\theta](\frac{\kappa}{2})=0$ and the exponential decay of the Mat\'ern covariance function
\[
\begin{split}
 A_\ell (r)
 &
  = \Big[\big( 2\nu\big)^{p-\ell}r^{-d} [\rho \theta](t) (rt)^{\frac{d}{2}}J_{\frac{d}{2}}(rt)\Big]_{\kappa/2}^\infty - \big( 2\nu\big)^{p-\ell}r^{2\ell} \int_{\kappa/2}^\infty  r^{-\frac{d}{2}} t^{\frac{d}{2}}J_{\frac{d}{2}}(rt) [\rho\theta]'(t) \dx t
 \\
 &
 = - \big( 2\nu\big)^{p-\ell}r^{2\ell} \int_{\kappa/2}^\infty  r^{-\frac{d}{2}} t^{\frac{d}{2}}J_{\frac{d}{2}}(rt) [\rho\theta]'(t) \dx t
 \\
 &
 = - \big( 2\nu\big)^{p-\ell}r^{2\ell} \int_{\kappa/2}^\infty  r^{-\frac{d}{2}} t^{1-\frac{d}{2}}J_{\frac{d}{2}}(rt) [\rho\theta]'(t) t^{d-1} \dx t
 \\
 &
 =  \big( 2\nu\big)^{p-\ell}r^{2\ell-2} \int_{\kappa/2}^\infty   [\rho\theta]'(t) t^{d-1}\frac{\dx }{\dx t} \Big[ (rt)^{1-\frac{d}{2}}J_{\frac{d}{2}-1}(rt)\Big]\dx t\,,
 \end{split}
\]
where in the last step we use $\frac{\dx}{\dx z}\big[z^{-\alpha }J_{\alpha }(z)\big]=-z^{-\alpha }J_{\alpha +1}(z)$ (see \cite[page 45]{Wat}).  
Integrating by parts again we arrive at 
\[
\begin{split}
A_\ell (r)
= - \big( 2\nu\big)^{p-\ell}r^{2\ell-2} \int_{\kappa/2}^\infty   (rt)^{1-\frac{d}{2}}J_{\frac{d}{2}-1}(rt) \big([\rho\theta]'(t) t^{d-1}\big)'\dx t\,.
\end{split}
\]
Repeating this argument we conclude that
\[
\begin{split}
	A_\ell (r) =(-1)^\ell \big( 2\nu\big)^{p-\ell} \int_{\kappa/2}^\infty (r t)^{1-\frac{d}{2}}J_{\frac{d}{2}-1}(rt)\big(\big(\big(\big([\rho\theta]'(t)t^{d-1}\big)'t^{1-d}\big)'\ldots\big)'t^{d-1} \big)'\dx t \,,
\end{split}
\]
where derivatives are taken $2\ell$ times. Employing Lommel's expression of $J_\alpha$, see \cite[page 47]{Wat},
\[
 J_\alpha (z)= \frac{(z/2)^{\alpha}}{\Gamma(1/2)\Gamma(\alpha+1/2)} \int_0^\pi \cos(z\cos \beta)\sin^{2\alpha} \beta\, \dx \beta,\qquad \alpha> -1/2,
\]
and 
$
J_{-1/2}(z)= \sqrt{\frac{2}{\pi}}\frac{\cos z}{\sqrt{z}}
$,
we can bound
\begin{equation} \label{eq:J}
|z^{-\alpha}J_\alpha (z)|\leq  \frac{(1/2)^{\alpha}}{\Gamma(1/2)\Gamma(\alpha+1/2)} \int_0^\pi  \sin^{2\alpha} \beta\, \dx \beta = \frac{(1/2)^{\alpha}}{\Gamma(\alpha+1)} ,\qquad \alpha> -1/2\,,
\end{equation}
where in the last equality we have used the relation between Gamma and Beta functions, see \cite[Section 6.2]{AS}.
Consequently
\[
\begin{split}
	 |A_\ell (r) | &\leq C_0 \big( 2\nu\big)^{p-\ell} \int_{\kappa/2}^\infty  \big|\big(\big(\big(\big([\rho\theta]'(t)t^{d-1}\big)'t^{1-d}\big)' \ldots\big)'t^{d-1}\big)'\big|\dx t \,,
\end{split}
\]
with $C_0=  \frac{(1/2)^{d/2-1}}{\Gamma(d/2)}$ for  $d\in \{1,2,3\} $. 

To finish the proof we need a technical lemma. For $\ell\in \N$ and $f$ having derivatives of sufficiently high order, we denote 
\[
\begin{split}
B_{ 2\ell ,d}(f,t)&:=\big(\big(\big(\big(f'(t)t^{d-1}\big)'t^{1-d}\big)' \ldots\big)'t^{d-1}\big)'\,;\qquad (2\ell\ \text{times})\\
B_{ 2\ell+1 ,d}(f,t)&:=\big(\big(\big(\big(f'(t)t^{d-1}\big)'t^{1-d}\big)' \ldots\big)'t^{1-d}\big)'\,;\qquad (2\ell+1\ \text{times})\,.
\end{split}
\]
\begin{lemma}\label{derivative} For $\ell\geq 1$ and $f$ having $2\ell$-th derivative, the term $B_{ 2\ell ,d}(f,t)$ has the form
	\begin{equation}\label{k-001}
	B_{ 2\ell ,d}(f,t)=\sum_{\alpha =1}^{ 2\ell } a_{ 2\ell ,\alpha } f^{(\alpha )}(t)t^{d-1+\alpha - 2\ell }
	\end{equation}
	with $B_{ 2\ell ,1}(f,t)=f^{( 2\ell )}(t)$, $B_{ 2\ell ,3}(f,t)= f^{( 2\ell )}(t)t^{2}+ 2\ell f^{( 2\ell -1)}(t)t$, 
	and when $d=2$ we have 
	\[
	\sum_{\alpha =1}^{ 2\ell } |a_{ 2\ell ,\alpha }|\leq 4^{\ell-1} 2 [(\ell-1)!]^2\,.
	\]
\end{lemma}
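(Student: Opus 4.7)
The plan is to proceed by induction on $\ell$, first deriving a three-term recursion for the coefficients $a_{2\ell,\alpha}$ from the defining recursion of $B_{2\ell,d}$, and then specialising to each of the three dimensions.

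\textbf{Step 1: General form and coefficient recursion.} The base case follows from $B_{2,d}(f,t) = (f'(t) t^{d-1})' = t^{d-1} f''(t) + (d-1) t^{d-2} f'(t)$, which has the form \eqref{k-001} with $a_{2,2}=1$, $a_{2,1}=d-1$. For the inductive step, using $B_{2\ell+2,d}(f,t) = \bigl(\bigl(B_{2\ell,d}(f,t)\, t^{1-d}\bigr)' t^{d-1}\bigr)'$, I apply two differentiations with the Leibniz rule to a generic monomial $f^{(\alpha)}(t)\, t^{d-1+\alpha-2\ell}$ appearing in $B_{2\ell,d}$. The output is a sum of three terms of the form $f^{(\alpha+j)}(t)\, t^{d-1+(\alpha+j)-2(\ell+1)}$ for $j\in\{0,1,2\}$, which confirms the inductive structure and, upon reindexing $\beta:=\alpha+j$ and collecting, yields
\begin{equation*}
  a_{2\ell+2,\beta} \,=\, a_{2\ell,\beta-2} + (2\beta + d - 3 - 4\ell)\, a_{2\ell,\beta-1} + (\beta - 2\ell)(\beta + d - 2 - 2\ell)\, a_{2\ell,\beta}.
\end{equation*}

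\textbf{Step 2: The cases $d=1$ and $d=3$.} For $d=1$, the base case has only $a_{2,2}=1$ nonzero, and the recursion specialises so that $a_{2\ell+2,2\ell+2}=a_{2\ell,2\ell}=1$ while all other coefficients vanish by direct inspection; this gives $B_{2\ell,1}(f,t)=f^{(2\ell)}(t)$. For $d=3$, I show inductively that only $a_{2\ell,2\ell}=1$ and $a_{2\ell,2\ell-1}=2\ell$ are nonzero: evaluating the recursion at $\beta=2\ell+2$ yields $a_{2\ell+2,2\ell+2}=1$, at $\beta=2\ell+1$ yields $a_{2\ell+2,2\ell+1}=2\ell+2=2(\ell+1)$, while for $\beta\leq 2\ell$ all three contributions vanish (because either the factor $\beta-2\ell$ in the third term, the factor $2\beta-4\ell$ in the middle term for $\beta=2\ell$, or the inductively zero coefficients $a_{2\ell,\beta-2}$ and $a_{2\ell,\beta-1}$ annihilate them).

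\textbf{Step 3: The bound for $d=2$.} With $d=2$ the recursion simplifies to $a_{2\ell+2,\beta} = a_{2\ell,\beta-2} + (2\beta-1-4\ell)\, a_{2\ell,\beta-1} + (\beta-2\ell)^2\, a_{2\ell,\beta}$. Taking absolute values, summing over $\beta$, and using the uniform bounds $|2\beta-1-4\ell|\leq 4\ell-3$ (valid for $\beta-1\in\{1,\ldots,2\ell\}$ and $\ell\geq 1$) and $(\beta-2\ell)^2\leq (2\ell-1)^2$ (for $\beta\in\{1,\ldots,2\ell\}$), I obtain
\begin{equation*}
  \sum_\beta |a_{2\ell+2,\beta}| \,\leq\, \bigl(1 + (4\ell-3) + (2\ell-1)^2\bigr)\sum_\beta |a_{2\ell,\beta}| \,=\, (4\ell^2-1)\sum_\beta |a_{2\ell,\beta}|.
\end{equation*}
Together with the base case $\sum_\beta|a_{2,\beta}|=2$ (since $a_{2,2}=a_{2,1}=1$ when $d=2$) and the elementary inequality $4\ell^2-1\leq 4\ell^2$, iteration yields $\sum_\beta|a_{2\ell,\beta}|\leq 2\prod_{k=1}^{\ell-1} 4k^2 = 4^{\ell-1}\cdot 2\cdot[(\ell-1)!]^2$, which is the claimed bound. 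The main technical obstacle lies in Step~1, specifically in correctly combining the two intermediate contributions $(d-1+\alpha-2\ell)$ and $(\alpha-2\ell)$ produced by the two rounds of differentiation into the single middle coefficient $2\beta+d-3-4\ell$ after reindexing; once this recursion is established, the closed forms for $d\in\{1,3\}$ propagate by direct inspection, and the bound for $d=2$ reduces to elementary estimation.
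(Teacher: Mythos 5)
Your proof is correct and follows essentially the same route as the paper: induction on $\ell$, applying the two-step Leibniz expansion to pass from $B_{2\ell,d}$ to $B_{2\ell+2,d}$, which yields the same three-term coefficient recursion the paper obtains in its display labelled (bm), and then specialising to $d=1,2,3$. (One minor imprecision: in the $d=3$ case at $\beta=2\ell-1$, the third term vanishes because $\beta+d-2-2\ell=0$, not because of the factor $\beta-2\ell$; this does not affect the validity of the argument.)
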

The proof of this lemma is given in Appendix \ref{auxproofs}.  We continue the proof of Theorem \ref{thm:smoothcond} by using the above lemma to obtain the estimate
\begin{equation}\label{eq:Al-lmm}
\begin{split} 
	 | A_\ell  (r) | 
	& \leq C_0 4^{\ell}  (\ell!)^2 \big( 2\nu\big)^{p-\ell} \max_{0<\alpha\leq 2\ell}\int_{\kappa/2}^{\infty}\bigg|  \frac{[\rho\theta]^{(\alpha)}(t)}{t^{2\ell-\alpha-d+1}}\bigg|\dx t
	\,.
\end{split}
\end{equation}
Thus, from \eqref{k--01} we get
\begin{equation}\label{k-03-1}
\begin{split} 
	\int_{\kappa/2}^{\infty}\bigg|  \frac{[\rho\theta]^{(\alpha)}(t)}{t^{2\ell-\alpha-d+1}}\bigg|\dx t & \leq \sum_{n=0}^\alpha \binom{\alpha}{n} \int_{\kappa/2}^{\infty}  \frac{ | \rho^{(n)}(t) \theta^{(\alpha-n)}(t)  |}{t^{2\ell-\alpha-d+1}} \dx t
	\\
	& \leq \sum_{n=0}^\alpha \binom{\alpha}{n}2^{\alpha-n}\bigg(\frac{2P}{\kappa}\bigg)^{\alpha-n} \int_{\kappa/2}^{\infty}  \frac{ | \rho^{(n)}(t)  |}{t^{2\ell-\alpha-d+1}} \dx t
	\\
	& \leq 3^{2\ell}\max_{0\leq n\leq \alpha} \bigg(\frac{2P}{\kappa}\bigg)^{\alpha-n} \int_{\kappa/2}^{\infty}  \frac{ | \rho^{(n)}(t)  |}{t^{2\ell-\alpha-d+1}} \dx t
\end{split}
\end{equation}
which with \eqref{eq:Al-lmm} leads to
\begin{equation}\label{k-03}
\begin{split} 
   | A_\ell  (r)  | & \leq  C_0 6^{2\ell}  (\ell!)^2  ( 2\nu)^{p-\ell}
  \max_{0<\alpha\leq 2\ell \atop 0\leq n\leq \alpha} \bigg(\frac{2P}{\kappa}\bigg)^{\alpha-n} \int_{\kappa/2}^{\infty}  \frac{ | \rho^{(n)}(t)  |}{t^{2\ell-\alpha-d+1}} \dx t\,.
\end{split}
\end{equation}

From Lemma \ref{lem-diff}, we obtain
\begin{equation} \label{k-04}
\begin{split}
	\big|\rho^{(n)} (t)\big|&=\bigg|\frac{2^{1-\nu}}{\Gamma(\nu)} (2\nu)^{\frac{n}{2}}  \sum_{j=0}^{\lfloor {n}/{2}\rfloor } a_{n,j}\big({\sqrt{2\nu}t}\big)^{\nu-j}K_{\nu-n+j}\big({\sqrt{2\nu}t}\big)\bigg|
\end{split}
\end{equation} 
with $a_{n,j}$ as in \eqref{lem-2-eq}, and consequently
\[
\begin{split}
	\int_{\kappa/2}^{\infty}  \frac{ | \rho^{(n)}(t)  |}{t^{2\ell-\alpha-d+1}} \dx t &  \leq  \frac{2^{1-\nu}}{\Gamma(\nu)} (2\nu)^{\frac{n}{2}}\sum_{j=0}^{\lfloor {n}/{2}\rfloor } |a_{n,j}| \int_{\kappa/2}^{\infty}  \frac{\big|{\big({\sqrt{2\nu}t}\big)^{\nu-j}K_{\nu-n+j}\big({\sqrt{2\nu}t}\big)} \big|}{t^{2\ell-\alpha-d+1}} \dx t \\
	&  = \frac{2^{1-\nu}}{\Gamma(\nu)}(2\nu)^{\frac{2\ell-\alpha-d +n}{2}}\sum_{j=0}^{\lfloor {n}/{2}\rfloor } |a_{n,j}| 
	 \int_{t\geq \frac{\kappa \sqrt{2\nu}}{2}}  \frac{K_{\nu-n+j}(t)}{t^{-\nu+j+2\ell-\alpha-d+1}} \dx t\,.
\end{split}
\]
Since $\max\{2\nu -n+j,n-j\}\leq 2p$ we get $|\nu-n+j|\leq 2p-\nu$ which implies $K_{\nu-n+j}(t) \leq K_{2p-\nu}(t) $ by the representation \eqref{Knuintegral}. Again with the assumption $\tau:=\frac{\kappa \sqrt{2\nu}}{2}= \frac{\kappa \sqrt{2\nu}}{2\lambda} \geq 2 P$ and $j+2\ell-\alpha\geq 0$ we can estimate
\[
\begin{split}
	\int_{\kappa/2}^{\infty}  \frac{ | \rho^{(n)}(t)  |}{t^{2\ell-\alpha-d+1}} \dx t & \leq     \frac{2^{1-\nu}}{\Gamma(\nu)} (2\nu)^{\frac{2\ell-\alpha-d+n}{2}}  \int_{t\geq \tau}  \frac{K_{2p-\nu}(t)}{t^{-\nu-d+1}} \dx t \sum_{j=0}^{\lfloor {n}/{2}\rfloor } |a_{n,j}| \\
	& \leq  (2\ell )! \frac{2^{1-\nu}}{\Gamma(\nu)} (2\nu)^{\frac{2\ell-\alpha-d+n}{2}}  \int_{t\geq \tau}  \frac{K_{2p-\nu}(t)}{t^{-\nu-d+1}} \dx t\,,
\end{split}
\]
where in the second step we have used Lemma \ref{lem-diff}. 
Inserting this into \eqref{k-03} we obtain
\[
\begin{split} 
 | A_\ell (r) | & \leq  C_0 6^{2\ell}  (\ell!)^2 (2\ell )! \frac{2^{1-\nu}}{\Gamma(\nu)} \int_{t\geq \tau}  \frac{K_{2p-\nu}(t)}{t^{-\nu-d+1}} \dx t \max_{0<\alpha\leq 2\ell \atop 0\leq n\leq \alpha} \bigg(\frac{2P}{\kappa}\bigg)^{\alpha-n}  (2\nu)^{\frac{2p-\alpha-d+n}{2}}  \\
& \leq C_0 6^{2\ell} (\ell!)^2 (2p )!\frac{2^{1-\nu}}{\Gamma(\nu)} (2\nu)^{p-\frac{d}{2}}\int_{t\geq \tau}  \frac{K_{2p-\nu}(t)}{t^{-\nu-d+1}} \dx t\ \,.
\end{split}
\]
These estimates hold form all $\ell\geq 1$. Moreover, using   \eqref{eq:J} and $|\theta(t)|\leq 1$ for all $t\geq 0$ we have from \eqref{eq:al}
\begin{equation}\label{H0est1}
\begin{split}
|A_0 (r) | 
&
= 
 \big( 2\nu\big)^{p} \bigg| \int_{{\kappa}/{2}}^\infty [\rho\theta](t)(r t)^{-\frac{d}{2}+1}J_{\frac{d}{2}-1}(rt)t^{d-1}\dx t \bigg|
 \\
 & \leq C_0\frac{2^{1-\nu}}{\Gamma(\nu)} ( 2\nu)^{p}  \int_{{\kappa}/{2}}^\infty  \big(\sqrt{2\nu}t\big)^{\nu}K_\nu\big(\sqrt{2\nu}t\big) t^{d-1}\dx t 
\\
& =  C_0\frac{2^{1-\nu}}{\Gamma(\nu)} ( 2\nu)^{p-\frac{d}{2}}  \int_{\tau}^\infty  t^{\nu+d-1}K_\nu(t)\dx t
\\
&\leq  C_0\frac{2^{1-\nu}}{\Gamma(\nu)} ( 2\nu)^{p-\frac{d}{2}}   \int_{\tau}^\infty  t^{\nu +d-1}K_{2p-\nu}(t)\dx t\,.
\end{split}
\end{equation}
Consequently 
\begin{equation} \label{H1est}
|A(r) | \leq C_0 (2\pi)^{d/2} 37^{p} (p!)^2 (2p )!\frac{2^{1-\nu}}{\Gamma(\nu)} (2\nu)^{p-\frac{d}{2}}\int_{t\geq \tau}  \frac{K_{2p-\nu}(t)}{t^{-\nu-d+1}} \dx t\,.
\end{equation}

Since $\nu+d-1 \leq \nu(1+2(d-1))$, as a consequence of  \eqref{besselestimate} with $d_1=1+2(d-1)$,
\[
\begin{split} 
\int_{t\geq \tau}  \frac{K_{2p-\nu}(t)}{t^{-\nu-d+1}} \dx t 
&
\leq 
	\int_{\tau}^\infty t^{\nu d_1} {K_{2p-\nu}(t)} \dx t 
	 \leq e \frac{2^{(4p-2\nu)}\Gamma(2p-\nu)}{2}\int_{\tau}^\infty \frac{ t^{ \nu d_1}e^{-t}}{\sqrt{2t}}\dx t \,.
\end{split}
\]
Using the assumption $\tau = \frac{\kappa \sqrt{2\nu}}{2} \geq  2\nu d_1$ as well as
\[
 	t^{\nu d_1} e^{-t} \leq \biggl(\frac{2 \nu d_1}{e}\biggr)^{\nu d_1} e^{-\frac{t}2},\qquad t>0,
\]
we obtain
\begin{equation} \label{k-008}
	\int_{\tau}^\infty t^{\nu d_1} {K_{2p-\nu}(t)}\, \dx t 
 \leq e \frac{2^{(4p-2\nu)}\Gamma(2p-\nu)}{2\sqrt{\nu}}\bigg(\frac{2\nu d_1}{e} \bigg)^{\nu d_1}  e^{-\frac{\kappa}4 \sqrt{2\nu}} \,.
\end{equation}
Combining \eqref{H1est} and \eqref{k-008}, we arrive at 
\begin{equation*}
	|A(r)  |
	 \leq C_0 (2\pi)^{d/2} 37^{p} (p!)^2 (2p )! (2\nu)^{p-\frac{d}{2}}\frac{2^{(4p-\nu-2)}\Gamma(2p-\nu)}{\Gamma(\nu)\sqrt{\nu}}\bigg(\frac{2\nu d_1}{e} \bigg)^{\nu d_1}   e^{-\frac{\kappa}4 \sqrt{2\nu}}  \,.
\end{equation*}
Now the required bound \eqref{condlambda1} follows from
\begin{equation}\label{eq:sufficient}
  C_{1,\nu} \geq  C_0 (2\pi)^{d/2} 37^{p} (p!)^2 (2p )! (2\nu)^{p-\frac{d}{2}}\frac{2^{(4p-\nu-2)}\Gamma(2p-\nu)}{\Gamma(\nu)\sqrt{\nu}}\bigg(\frac{2\nu d_1}{e} \bigg)^{\nu d_1}   e^{-\frac{\kappa}4 \sqrt{2\nu}} \,.
\end{equation}
Since $(2\nu)^{p-\nu-\frac{d}2} \leq 2\nu $, a sufficient condition for \eqref{eq:sufficient} is
\begin{equation}\label{eq:sufficient2}
 C \geq  37^{p} (p!)^2 (2p )! \frac{2^{(4p-\nu)}\Gamma(2p-\nu)}{ \Gamma(\nu+d/2)  }\bigg(\frac{2\nu d_1}{e} \bigg)^{\nu d_1} \sqrt{\nu}  e^{-\frac{\kappa}4 \sqrt{2\nu}},
\end{equation}
with $C>0$ independent of $\kappa$ and $\nu$. 

Taking logarithms and using the Stirling bounds
\[
\begin{split}
  \ln (p!) &\leq (p +\textstyle\frac12\displaystyle) \ln p  - p + 1, \\
   \ln \Gamma(\nu+d/2) &\geq (\nu + d/2 - \textstyle\frac12\displaystyle ) \ln (\nu+d/2) - (\nu+d/2) + \textstyle\frac12\displaystyle\ln 2\pi ,\\
  \ln \Gamma(2p - \nu) &\leq (2p - \nu - \textstyle\frac12\displaystyle) \ln (2p - \nu) - (2p - \nu) + \textstyle\frac12\displaystyle \ln 2\pi + \textstyle\frac{1}{18}\displaystyle
\end{split}
\]
as well as $p \leq \nu + \frac{d}2 + 1$, for general $\lambda > 0$ shows that the condition
\[
  \frac{\kappa}\lambda \sqrt{\nu} \geq C_1 + C_2 \nu \ln \nu
\]
where $C_1, C_2$ depend only on $d$ (or more precisely, $C_1, C_2 = \mathcal{O}(d \log d)$) is sufficient to ensure \eqref{eq:sufficient2} 
\medskip

\noindent\emph{Step 2. The case $\nu < \frac12$.}
 Since we restrict ourselves to $d\leq 3$, we have $p = \ceil{\nu+\frac{d}2} \leq 2$. Repeatedly using the identities $	K_\nu'(t)=- K_{\nu-1}(t) - \frac{\nu}t K_\nu(t)$ and $K_{\nu}' = \frac{\nu}t K_\nu - K_{\nu+1}$, we obtain
\[
\begin{aligned}
\frac{\dx}{\dx t}\big(t^\nu K_\nu(t)\big) &= -t^\nu K_{\nu-1}(t), 
\\
 \frac{\dx^2}{\dx t^2}\big(t^\nu K_\nu(t)\big) &= t^\nu K_\nu(t) - (2\nu-1) t^{\nu-1}K_{1-\nu}(t), 
 \\
 \frac{\dx^3}{\dx t^3}\big(t^\nu K_\nu(t)\big) &= -t^\nu K_{\nu-1}(t) + (2\nu-1)t^{\nu-1} K_\nu(t) - (2\nu-1)(2\nu-2)t^{\nu-2}K_{\nu-1}(t) 
 \\
 \frac{\dx^4}{\dx t^4} \big(t^\nu K_\nu(t)\big) &= \big[t^\nu + (2\nu-1)(2\nu-2) t^{\nu-2}\big] K_\nu(t) 
 \\ &\quad + \big[ -2(2\nu-1)t^{\nu-1}  -  (2\nu-1) (2\nu-2)(2\nu-3)  t^{\nu-3} \big] K_{\nu-1}(t). 
\end{aligned}
\]
Next, we note that $K_{\nu-1} = K_{1-\nu}$, and by \cite[10.2.17]{AS}
\[ 
  \max\big\{ K_\nu(t), K_{1-\nu}(t) \big\}\leq K_{3/2}(t) = \sqrt{\pi/2} \big(t^{-3/2} + t^{-1/2}\big) e^{-t},
  \]
 where the monotonicity in $\nu$ can be seen from the explicit representation \eqref{Knuintegral}. Continuing from \eqref{k-03}, we now estimate for $0\leq \ell\leq p$
\begin{equation} \label{Aell}
\begin{split} 
 | A_\ell (r)  | & \leq  C ( 2\nu)^{p-\ell}\max_{0<\alpha\leq 2\ell \atop 0\leq n\leq \alpha} \bigg(\frac{2P}{\kappa}\bigg)^{\alpha-n} \int_{\kappa/2}^{\infty}  \frac{ | \rho^{(n)}(t)  |}{t^{2\ell-\alpha-d+1}} \dx t\,.
\end{split}
\end{equation} 
By \eqref{kappacondition} we may use the assumption $\kappa \sqrt{2\nu}/2 \geq  1$, and thus
\[
| \rho^{(n)}(t)  |\leq C\frac{2^{1-\nu}}{\Gamma(\nu)}  (2\nu)^{\frac{n}{2}}e^{-\sqrt{2\nu}t}\,,
\]
if $t\geq \kappa/2$. As a consequence, 
\[
\begin{aligned}
  \int_{\kappa/2}^{\infty}  \frac{ | \rho^{(n)}(t)  |}{t^{2\ell-\alpha-d+1}} \dx t
  & 
  \leq
   C \frac{2^{1-\nu}}{\Gamma(\nu)}  (2\nu)^{\frac{n}{2}} \int_{\kappa/2}^\infty \frac{  e^{-\sqrt{2\nu}t}}{t^{2\ell-\alpha-d+1}} \dx t 
    = 
    C \frac{2^{1-\nu}}{\Gamma(\nu)}  (2\nu)^{\frac{2\ell-\alpha + n -d}{2}}   \int_{\frac{\kappa\sqrt{2\nu}}{2}}^\infty \frac{ e^{-t}}{t^{2\ell-\alpha-d+1}} \dx t 
    \\ 
    & \leq C \frac{2^{1-\nu}}{\Gamma(\nu)}  (2\nu)^{\frac{2\ell-\alpha + n -d}{2}}   e^{-\frac{\kappa\sqrt{2\nu}}{4}}
\end{aligned}
\]
and
\[
\begin{split}
  |A_\ell (r) | 
  &
   \leq 
   C \frac{2^{1-\nu}}{\Gamma(\nu)}   e^{-\frac{\kappa\sqrt{2\nu}}{4}} \max_{0<\alpha\leq 2\ell \atop 0\leq n\leq \alpha} \bigg(\frac{2P}{\kappa}\bigg)^{\alpha-n} (2\nu)^{\frac{2p-\alpha + n -d}{2}} 
   \\
   &
      = 
   C \frac{2^{1-\nu}}{\Gamma(\nu)}   e^{-\frac{\kappa\sqrt{2\nu}}{4}} (2\nu)^{\frac{2p -d}{2}} \max_{0<\alpha\leq 2\ell \atop 0\leq n\leq \alpha} \bigg(\frac{2P}{\kappa\sqrt{2\nu}}\bigg)^{\alpha-n} 
  \\ 
  &
   \leq 
   C \frac{2^{1-\nu}}{\Gamma(\nu)} (2\nu)^{p-\frac{d}{2}}  e^{-\frac{\kappa\sqrt{2\nu}}{4}}\,.
\end{split}
\]
For $0<\nu<\frac12$, the condition for \eqref{eq:eq} to hold becomes 
\[ 
 C_{1,\nu} \left( 2 \nu + \abs{\bsomega}^2\right)^{-\nu-\frac{d}2} \geq C   \left( 2 \nu + \abs{\bsomega}^2 \right)^{-p} \frac{2^{1-\nu}}{\Gamma(\nu)} (2\nu)^{p-\frac{d}{2}}  e^{-\frac{\kappa\sqrt{2\nu}}{4}} ,\qquad \bsomega \in \R^d,
\]
or equivalently
$
e^{\frac{\kappa\sqrt{2\nu}}{4}} \geq C \left( 1 + \abs{\bsomega}^2/(2\nu) \right)^{\nu+\frac{d}{2}-p}
$
with $C>0$ independent of $\kappa$, $\nu$, which is implied by $\kappa \geq (2 \sqrt{2} \ln C)\, \nu^{-1/2}$. This completes the proof.
\end{proof}

\section{Numerical Experiments}\label{sec:num}
The eigenvalue decay established in Theorem \ref{thm:conjecture} has already been studied numerically in \cite{GKNSS}. Note that the results given there are also consistent with the presence of the extra logarithmic factor in \eqref{eq:conjecture}. Here, we thus focus on a numerical study of the required extension size $\gamma$.
In order to assess the sharpness of the necessary conditions \eqref{eq:alphahsmall} and \eqref{kappacondition}, we use a simple bisection scheme to find the minimum value of $\gamma$ that is actually required in each case to ensure that the obtained covariance matrix is positive definite. In all tests, we assume the box $[-1,1]^d$ as the computational domain, and we show results only for $\lambda=\frac12$ since the resulting values of $\gamma$ exhibit an approximately linear scaling with respect to $\lambda$.

In the case of smooth periodization, in addition to the cutoff function using integrated B-splines defined in \eqref{eq:bspline1}, \eqref{eq:bsplined} we also test a standard infinitely differentiable cutoff function as used in \cite{BCM}, which is simpler to implement in practice: let  
\[  
 \eta(x) = \begin{cases}
 	  \exp(-x^{-1}), & x > 0,\\
 	   0, & x \leq 0.
 \end{cases}
 \]
One can then replace the definition of $\varphi$ in \eqref{eq:bspline1} by 
\begin{equation}\label{eq:smooth1}
  \varphi(t) = \frac{\eta\left(\frac{\kappa-\abs{t}}{\kappa-1}\right) }{ \eta\left(\frac{\kappa-\abs{t}}{\kappa-1}\right) + \eta\left(\frac{\abs{t}- 1}{\kappa-1}\right)},
  \end{equation}
  and again define $\varphi_\kappa(\bx) := \varphi(\abs{\bx})$.

First, we compare the extension sizes $\gamma$ in terms of the grid size $h$ that are needed for classical circulant embedding and for smooth periodization; Figure \ref{fig:ratio} shows the resulting ratio of the number of grid points in the extension torus $\mathbb{T} = [-\gamma,\gamma]^d$ to the number of sampling grid points in the original domain. Whereas, as expected in view of Theorem \ref{thm:smoothcond}, the minimum required values of $\gamma$ are indeed independent of $h$ in the case of the smooth truncation, in the case of the classical circulant embedding the corresponding values of $\gamma$ indeed exhibit a dependence of order $\abs{\log h}^d$ on $h$. In this sense, we observe the result of Theorem \ref{thm:matern-growth} to be sharp. Especially for $d=3$ and smaller values of $h$, the smooth periodization leads to substantially more favorable extension sizes. The results shown are for the $C^\infty$-cutoff function \eqref{eq:smooth1}, and one obtains very similar results with \eqref{eq:bspline1}.
\begin{figure}
\includegraphics[width=7.2cm]{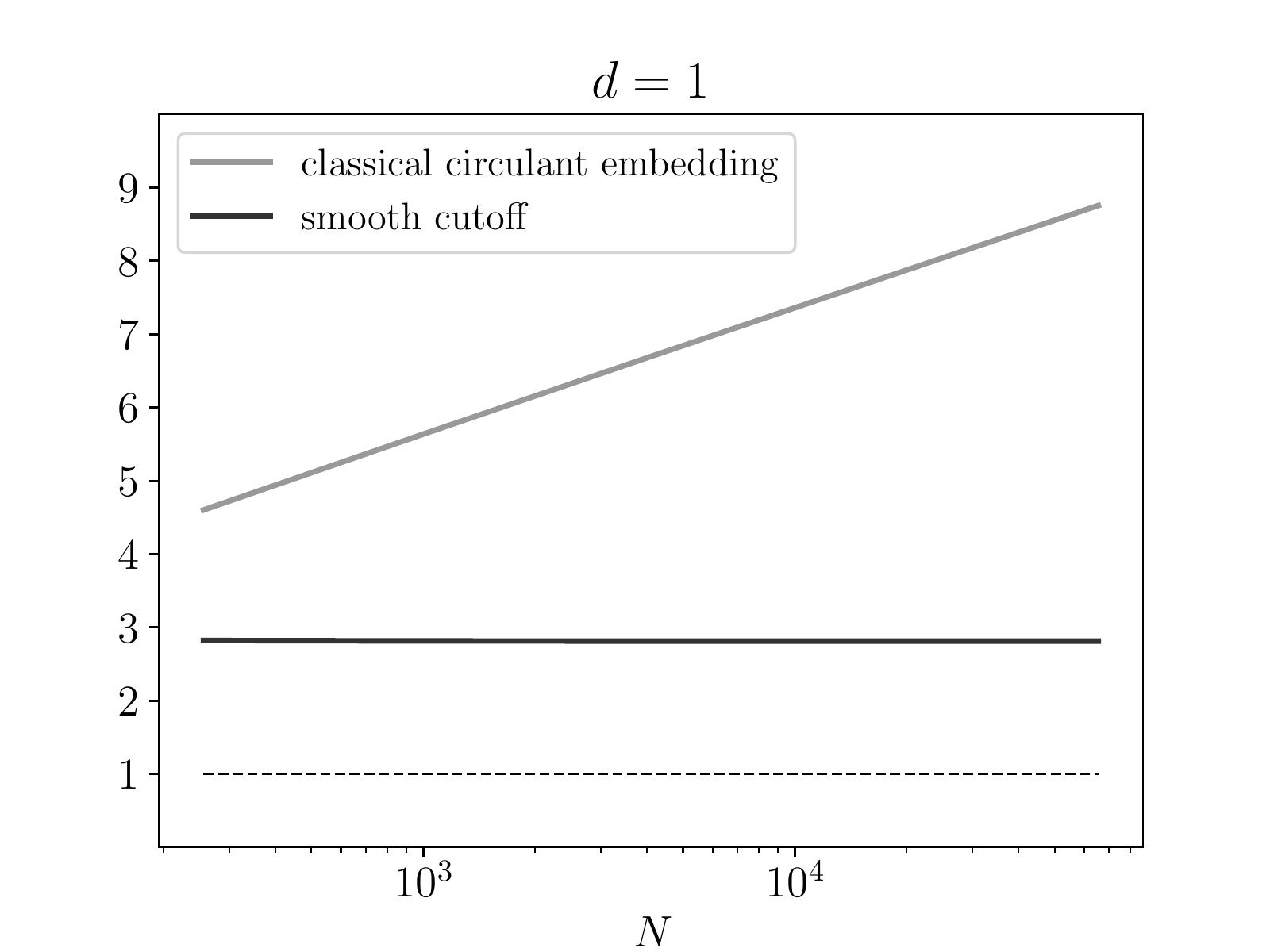} \\
\includegraphics[width=7.2cm]{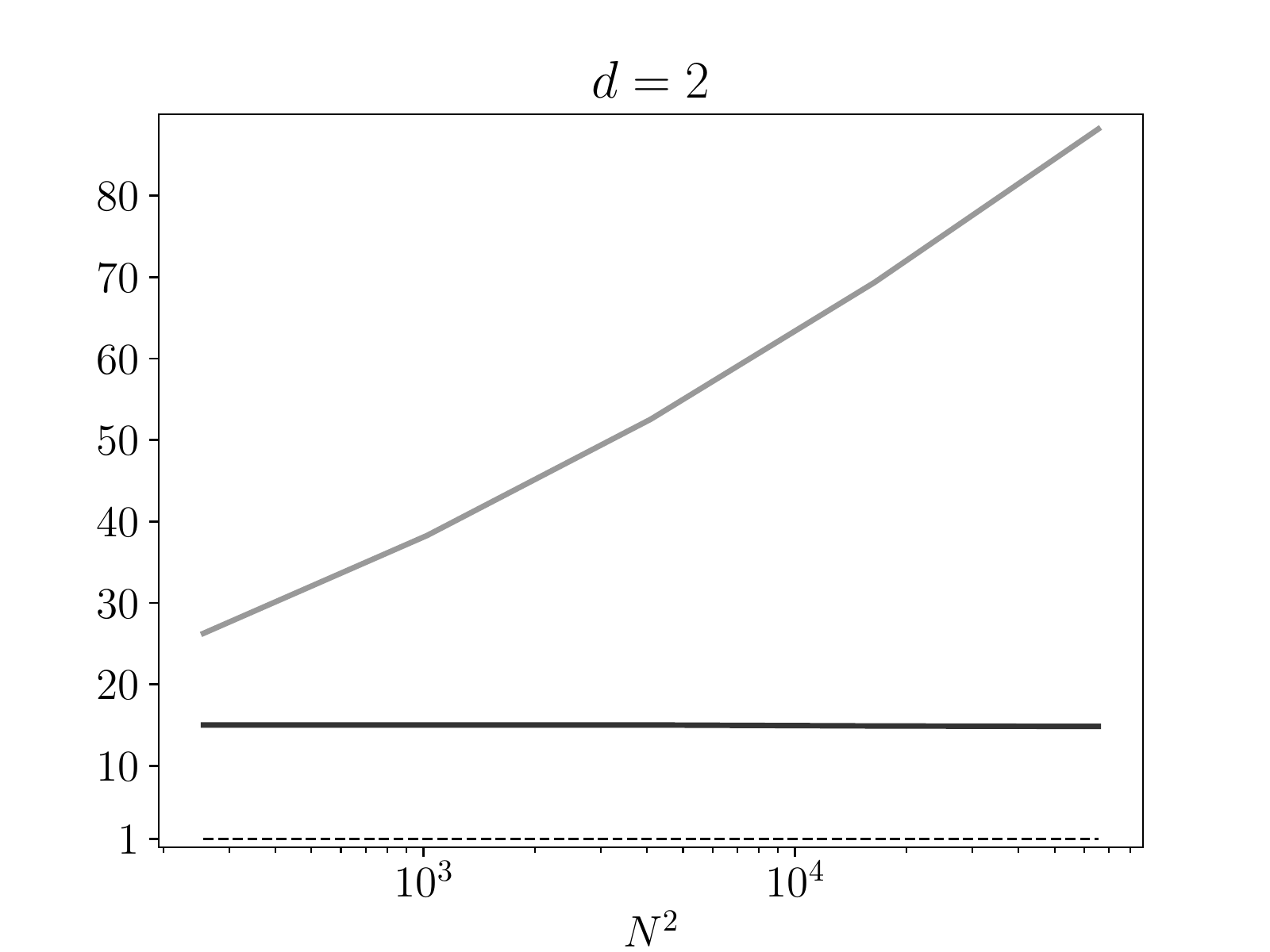} 
\includegraphics[width=7.2cm]{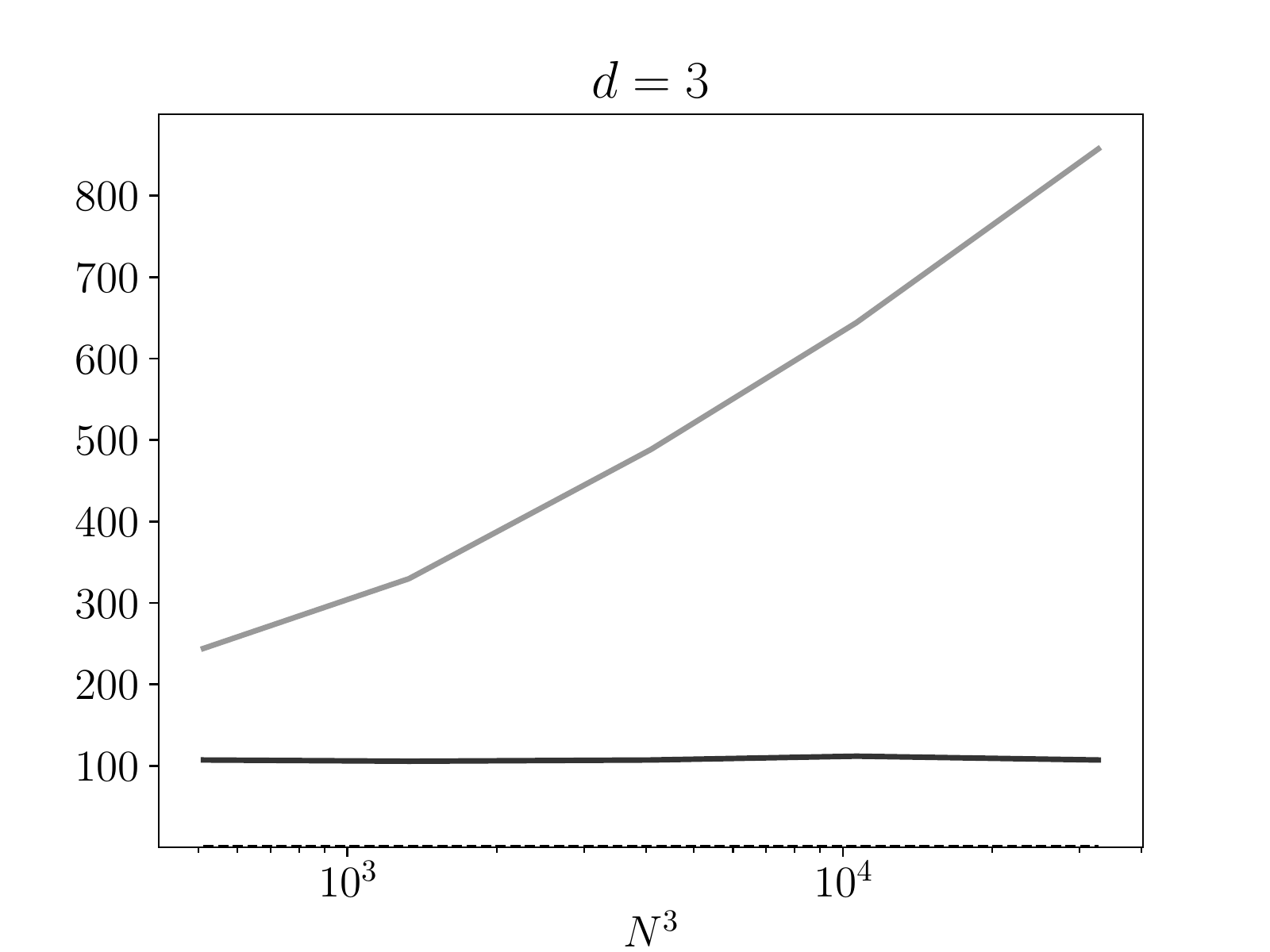}
\caption{Comparison of the number of grid points in the extension to the original number of grid points for classical circulant embedding and for smooth periodization, and for $\nu = 1$ and $h = 2^{-8}, \ldots, 2^{-16}$ in $d=1$; $h=2^{-4}, \ldots, 2^{-8}$ in $d=2$; and $h=2^{-3}, \ldots, 2^{-5}$ in $d=3$. \rev{The same legend applies to all $d$.}}
\label{fig:ratio}
\end{figure}

In addition, in Figure \ref{fig:nu} we consider the dependence of the minimum required $\gamma$ on $\nu$ for the periodization with smooth truncation and compare to the asymptotics in the condition \eqref{kappacondition} of Theorem \ref{thm:smoothcond}. 
We show results for both cutoff function constructions \eqref{eq:bspline1} and \eqref{eq:smooth1}.
We observe that in the particular case $d=1$, $\gamma$ remains bounded as $\nu \to 0$ (indeed, we observe $\gamma\to 1$ in this limit), whereas for $d>1$ we find an increase in $\gamma$ both as $\nu\to 0$ and $\nu \to \infty$. The actual required increase of $\gamma$ as $\nu \to 0$ appears to be slightly slower than the order $\nu^{-1/2}$ in \eqref{kappacondition}. The observed behaviour of $\gamma$ for larger $\nu$ is consistent with the sufficient condition of order $\nu^{1/2}\log \nu$ in \eqref{kappacondition}. Note that the B-spline cutoff of limited smoothness leads to a slower increase of $\gamma$ as $\nu\to \infty$, whereas $\gamma$ in this case increases slightly faster as $\nu\to 0$ for $d>1$.
\begin{figure}
\includegraphics[width=7.2cm]{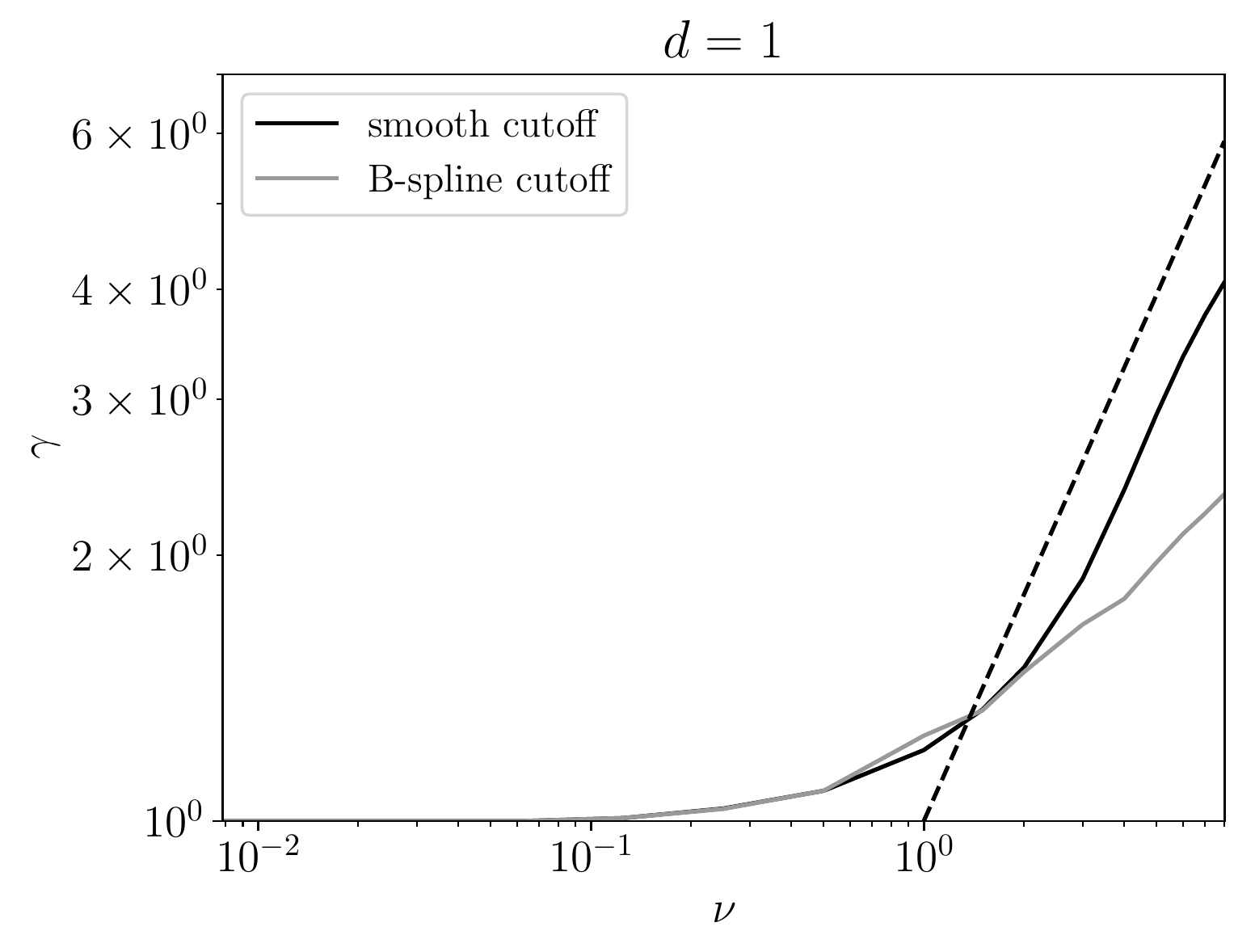} \\
\includegraphics[width=7.2cm]{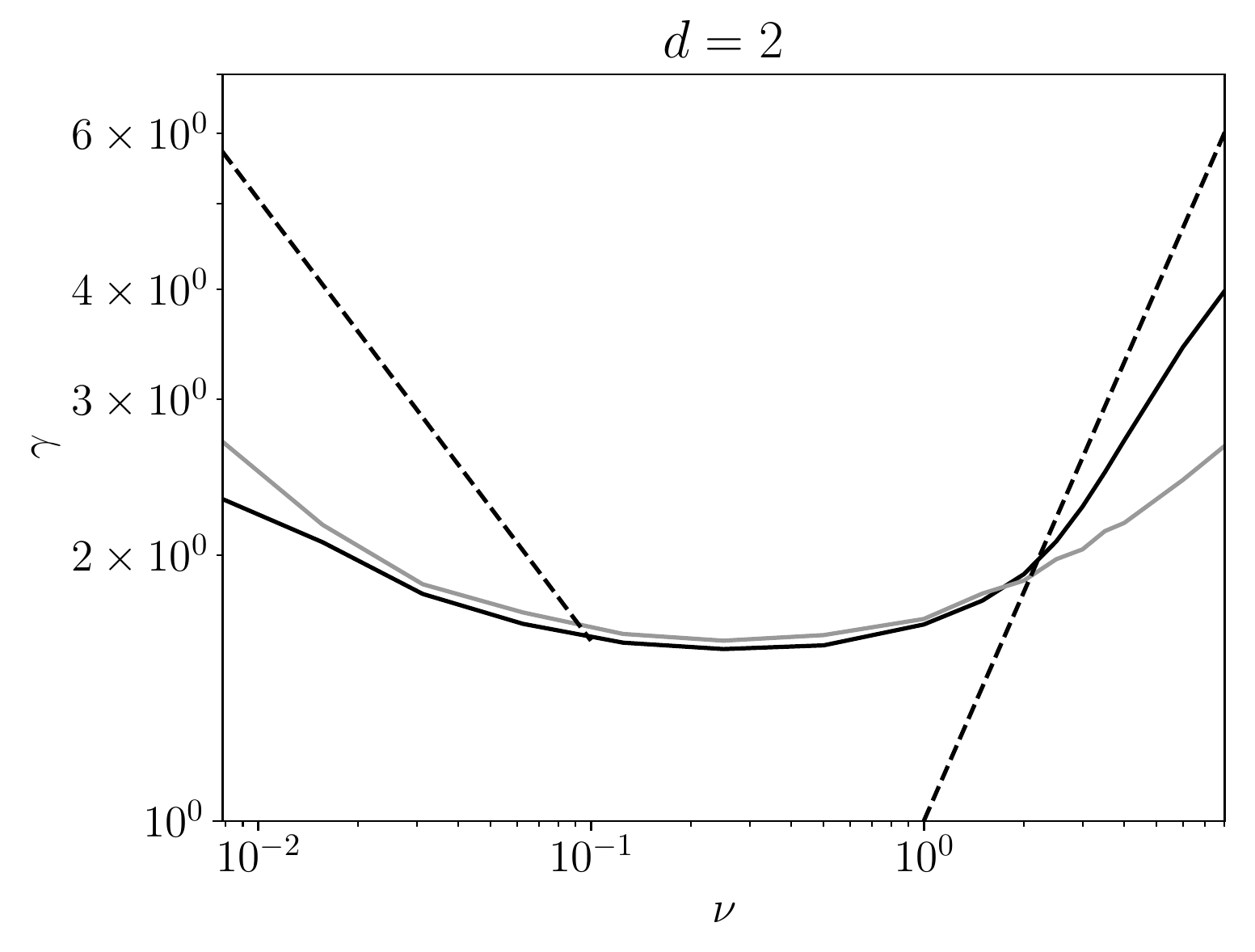} 
\includegraphics[width=7.2cm]{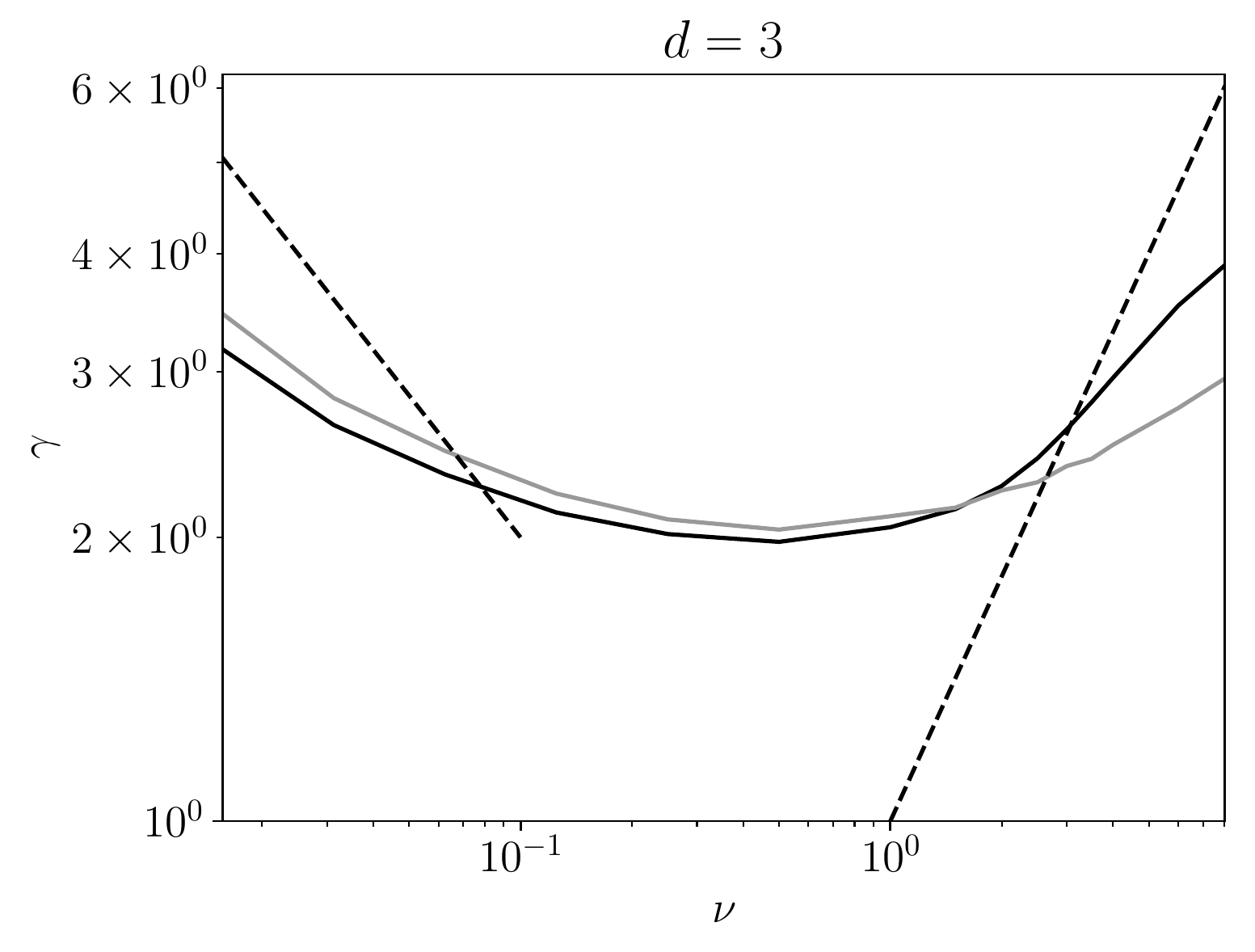}
\caption{Minimum extension size $\gamma$ required for positive
  definiteness of the covariance matrix resulting from smooth
  periodization in dependence on $\nu$, with $\nu = 2^{-7}, 2^{-6},
  \ldots, 2^2, 2^3$ and for $d=1,2,3$ with $h = 1/40000, 1/800, 1/30$,
  respectively. Dashed lines show the asymptotics $\nu^{-1/2}$ and
  $\nu^{1/2}\log \nu$ from \eqref{kappacondition}. 
\rev{The same legend applies to all $d$.}}
\label{fig:nu}
\end{figure}

\section{\rev{Conclusions}}\label{sec:conclusions}

\rev{We have seen that both classical and smooth periodization preserve the asymptotic decay rate of covariance eigenvalues.
Concerning the factor $\gamma \geq 1$ by which the sampling grid needs to be extended to ensure positive definiteness, there is a \rev{difference:
the smooth periodization requires an extension factor $\gamma$ that is
independent of the sample grid size $h$, whereas classical
periodization requires} $\gamma \sim \abs{\log h}$ according to
Theorem \ref{thm:matern-growth}. For the extended grid size $N^d = (2\gamma h^{-1})^d$ we thus have $N^d \sim h^{-d}$ in the case of smooth periodization, and $N^d \sim h^{-d} \abs{\log h}^d$ in the case of classical periodization. 
This is exactly what we observe for the numerically determined minimum extension sizes in Figure \ref{fig:nu}.
\rev{This directly translates to} efficiency advantages of the smooth periodization that are more pronounced for larger $d$.
The total \rev{cost is} dominated by the FFT on the \rev{extended} grid, which requires $\mathcal{O}(N^d \log N)$ operations.
Whereas in the case of the classical circulant embedding, one sample on $D$ thus
costs $\mathcal{O}(h^{-d} \abs{\log h}^{d+1})$ operations,
with the smooth periodization the cost is only $\mathcal{O}(h^{-d}
\abs{\log h})$ \rev{in terms of $h$}.}

\rev{One further useful consequence of the new result \eqref{eq:condnew} covering the full range of $\lambda$, $\nu$ is that smooth periodization also provides an attractive way of sampling with \emph{hyperpriors} on these two parameters: in this case, one needs to first randomly select $\lambda$ and $\nu$ according to some probability distribution, and then draw a sample of the Gaussian random field with the corresponding Mat\'ern covariance. This problem has been addressed, for instance, in \cite{LEU} by approximate sampling using a reduced basis. 
In this context, it is relevant that for both types of periodization, setting up the factorization for a new covariance and drawing a sample come at the same cost of one FFT each: thus by \eqref{eq:condnew}, provided that the distributions of $\lambda$, $\nu$ have compact supports inside $(0,\infty)$, smooth  periodization is guaranteed to provide each sample -- without any further approximation --  at near-optimal cost $\mathcal{O}(h^{-d} \abs{\log h})$.}

\rev{The periodic random field $Z^\mathrm{p}$ on $\mathbb{T}$, obtained
  using smooth truncation, also provides a tool for deriving
  \emph{series expansions} of the original \rev{random field. As
    mentioned above, in the smooth periodization case} the KL eigenvalues of the
periodic field have the decay rate \eqref{kldecay}. Moreover, in contrast to the KL  eigenfunctions on
$D$, which are typically not explicitly known, the corresponding
eigenfunctions $\varphi^\mathrm{p}_j$ of the periodic covariance are
explicitly known trigonometric functions \rev{and one has the following KL
expansion for the periodized random field:}
\[
  Z^\mathrm{p} = \sum_{j=1}^\infty y_j \sqrt{\lambda^\mathrm{p}_j} \,\varphi^\mathrm{p}_j,
  \quad y_j \sim \mathcal{N}(0,1)\  \text{ i.i.d.,} 
\]
with \rev{$\lambda^\mathrm{p}_j$ denoting} the eigenvalues of the
periodized covariance \rev{and the $\varphi^\mathrm{p}_j$ are normalized in
  $L^2(\mathbb{T})$}.
Restricting this expansion back to $D$,  one obtains an \rev{exact} expansion of the original random field on $D$ in terms of independent scalar random variables
\begin{equation}\label{nondstdexpansion}
  Z  = \sum_{j=1}^\infty y_j \sqrt{\lambda^\mathrm{p}_j} \bigl(\varphi^\mathrm{p}_j\big|_D\bigr), \quad y_j \sim \mathcal{N}(0,1)\  \text{ i.i.d.} 
\end{equation}
This provides an alternative to the standard KL expansion
\eqref{standardkl} of $Z$ in terms of eigenvalues $\lambda_j$ and
eigenfunctions $\varphi_j$ \rev{normalized in $L^2(D)$. The main
  difference is that the functions $\varphi^\mathrm{p}_j\big|_D$ in
\eqref{nondstdexpansion}}
are not $L^2(D)$-orthogonal. However, these functions are given
explicitly, and thus no approximate computation of eigenfunctions is
required.}

\rev{The eigenvalues $\lambda^\mathrm{p}_j$ can be approximated
  efficiently by FFT as described in \cite[Sec.\ 5.1]{BCM} \rev{and the
  asymptotic decay of 
 $\lambda^\mathrm{p}_j$ is the same as that of
 $\lambda_j$ in \eqref{standardkl}. Moreover, since the eigenfunctions of the periodized covariance
  are explicitly known trigonometric functions, the
 $L^\infty$-norms of the scaled eigenfunctions in
 the expansion \eqref{nondstdexpansion} also decay at
the same rate as 
$\sqrt{\lambda_j}$, that is,
\[
\left\| \sqrt{\lambda^\mathrm{p}_j}\varphi^\mathrm{p}_j|_D \right\|_{L^\infty(D)} \leq C
  \sqrt{\lambda_j}.
\] 
This decay
  of the $L^\infty$-norms is important for applications, e.g., to random
  PDEs. Since $\norm{\varphi_j}_{L^\infty(D)}$ is in general
  not uniformly bounded, see \cite[Sec.\ 3]{BCM}, this constitutes a
  marked advantage of the expansion \eqref{nondstdexpansion} over the
  standard KL expansion \eqref{standardkl}. In addition, on
  geometrically complicated $D$, 
\eqref{nondstdexpansion} is significantly easier to handle numerically.}}

\rev{The KL expansion of $Z^p$ also enables the construction of alternative expansions of $Z$ of the basic form \eqref{nondstdexpansion}, but with the spatial functions having additional properties.
In \cite{BCM}, wavelet-type representations 
\[
  Z = \sum_{\ell,k} y_{\ell,k} \psi_{\ell, k} , 
  \quad y_{\ell,k} \sim \mathcal{N}(0,1)\ \text{ i.i.d.,}
\]
are constructed \rev{by applying the square root of the covariance
  operator in the corresponding factorisation to periodic Meyer
  wavelets, with the summation running over $\ell \geq 0$ and $ k \in
\{0,\ldots,2^\ell -1\}^d$. The functions $\psi_{\ell,k}$} have the same multilevel-type localisation as the Meyer wavelets. This feature yields improved convergence estimates for tensor Hermite polynomial approximations of solutions of random diffusion equations with lognormal coefficients \cite{BCDM}.}

\bibliographystyle{amsplain}
\bibliography{BGNS_ceanalysis}

\begin{appendix}
	\section{Proofs of Auxiliary Results}\label{auxproofs}

	\begin{proof}[Proof of Lemma \ref{lem-estimate}.] If $0\leq \nu\leq \frac{1}{2}$ we use the estimate $K_\nu(t)\leq K_{1/2}(t)=\sqrt{\frac{\pi}{2t}} e^{-t} $. When $\nu>1/2$ we have (see \cite[Page 206]{Wat})
		\[ 
		\begin{split} 
		K_\nu(t) & =\Big(\frac{\pi}{2t}\Big)^{1/2} \frac{e^{-t}}{\Gamma(\nu+1/2)} \int_0^\infty e^{-u}u^{\nu-\frac{1}{2}}\Big(1+\frac{u}{2t}\Big)^{\nu-\frac{1}{2}}\dx u \\
		& \leq \Big(\frac{\pi}{2t}\Big)^{1/2} \frac{e^{-t}}{\Gamma(\nu+1/2)} \int_0^\infty e^{-u}u^{\nu-\frac{1}{2}}\big(1+u\big)^{\nu-\frac{1}{2}}\dx u \qquad (t\geq 1/2)\\
		& \leq \Big(\frac{\pi}{2t}\Big)^{1/2} \frac{e^{-t}}{\Gamma(\nu+1/2)} \int_0^\infty e^{-u} \big(1+u\big)^{2\nu-1}\dx u\,.
		\end{split}
		\]
		Changing variable we obtain
		\[
		\begin{split}
		K_\nu(t) 
		&\leq e \Big(\frac{\pi}{2t}\Big)^{1/2} \frac{e^{-t}}{\Gamma(\nu+1/2)} \int_1^\infty e^{-u} u^{2\nu-1}\dx u \leq  e \Big(\frac{\pi}{2t}\Big)^{1/2} \frac{e^{-t}\Gamma(2\nu)}{\Gamma(\nu+1/2)}\,.
		\end{split}
		\]
		Now using the duplication formula $\Gamma(\nu)\Gamma(\nu+1/2)=2^{1-2\nu}\sqrt{\pi}\Gamma(2\nu)$ we get the desired result.
	\end{proof}
	\begin{proof}[Proof of Lemma \ref{lem-diff}.]
		We proceed by induction. The statement in Lemma \ref{lem-diff} holds for $n=0$ and $n=1$ since $	\frac{\dx}{\dx t}\big(t^\nu K_\nu(t)\big)=-t^\nu K_{\nu-1}(t)$ which follows from $tK_\nu'(t)+\nu K_\nu(t)=-tK_{\nu-1}(t)$, see \cite[Section 3.71]{Wat}. Assume that  the statement holds for some $n\geq 1$. We write
		\[ 
		\frac{\dx^n }{\dx t^n}\big(t^\nu K_\nu(t)\big) 
		= 
		\sum_{j=0}^{\lfloor {n}/{2}\rfloor } a_{n,j}t^{\nu-j}K_{\nu-n+j}(t)
		=
		\sum_{j=0}^{\lfloor {n}/{2}\rfloor } a_{n,j}t^{\nu-n+j}K_{\nu-n+j}(t) t^{n-2j}\,.
		\]
		Taking derivatives on both sides and using $	\frac{\dx}{\dx t}\big(t^\tau K_\tau(t)\big)=-t^\tau K_{\tau-1}(t)$ we get
		\[ 
		\frac{\dx^{n+1} }{\dx t^{n+1}}\big(t^\nu K_\nu(t)\big) 
		=
		\sum_{j=0}^{\lfloor {n}/{2}\rfloor } a_{n,j}\big[-t^{\nu-j}K_{\nu-n+j-1}(t) + (n-2j) t^{\nu-j-1}K_{\nu-n+j}(t)\big] \,.
		\]
		This is of the form given in \eqref{lem-2-eq}, moreover
		\[
		\sum_{j=0}^{\lfloor {(n+1)}/{2}\rfloor } |a_{n+1,j}|
		\leq
		\sum_{j=0}^{\lfloor {n}/{2}\rfloor } |a_{n,j}| \big(1+ n-2j\big) \leq (n+1) \sum_{j=0}^{\lfloor {n}/{2}\rfloor } |a_{n,j}| \leq (n+1)!,
		\]
		where we have used the induction hypothesis. The proof is completed. 
	\end{proof}
	\begin{proof}[Proof of Lemma \ref{lmm:sigmaest}]
		For $|\bsalpha|_\infty \leq 2$ and $|\bx|\geq \gamma/2$, we have
		\[
		\begin{aligned}
		\partial^{\bsalpha}\sigma(\bx) &=  \partial^{\bsalpha}\big[\rho\big(1-\phi_1(|\cdot|/\gamma)\big) \big](\bx)
		\\
		&=  \partial^{\bsalpha}\rho(\bx)- \sum_{{0}\leq \bsbeta\leq \bsalpha} \binom{\bsalpha}{\bsbeta}\partial^{\bsalpha-\bsbeta} \rho(\bx)\,\partial^{\bsbeta}\big[\phi_1(|\cdot|/\gamma)  \big](\bx)
		\\
		& = \partial^{\bsalpha}\rho(\bx) - \sum_{{0}\leq \bsbeta\leq \bsalpha} \gamma^{-|\bsbeta|}\binom{\bsalpha}{\bsbeta}\partial^{\bsalpha-\bsbeta}\rho(\bx)\, \partial^{\bsbeta}\big[\phi_1(|\cdot|) \big](\bx/\gamma) ,
		\end{aligned}
		\]
		which implies, since $\gamma\geq 1$,
		\[
		| \partial^{\bsalpha}\sigma(\bx)| \leq C \max_{{0}\leq \bsbeta\leq \bsalpha}|\partial^{\bsbeta}\rho(\bx)| 
		= C\frac{2^{1-\nu}}{\Gamma(\nu)}\max_{{0}\leq \bsbeta\leq \bsalpha}\bigg|\partial^{\bsbeta}\bigg[\bigg(\frac{\sqrt{2\nu}|\bx|}{\lambda}\bigg)^{\nu}K_\nu\bigg(\frac{\sqrt{2\nu}|\bx|}{\lambda}\bigg)\bigg]\bigg|
		\]
		for some positive constant $C$ depending on $\bsalpha$. Since $r=|\bx| \geq \gamma/2$ implies in particular $r \geq 1/2$, we have  $|\partial^{\bsalpha} r| \leq C$ (which in general depends only on $d$) and therefore obtain from Lemma \ref{lem-diff}
		\[
		\begin{split} 
		| \partial^{\bsalpha}\sigma(\bx)|
		&
		\leq
		C \max_{{0}\leq \bsbeta\leq \bsalpha} \max_{n\leq |\bsbeta|} 	\bigg|\frac{\dx^n }{\dx r^n}\big[r^\nu K_\nu(r)\big] \bigg(\frac{\sqrt{2\nu}|\bx|}{\lambda}\bigg)\bigg| 
		\\
		&
		\leq 
		C \max_{n\leq 2d} 	\bigg|\sum_{j=0}^{\lfloor {n}/{2}\rfloor } a_{n,j}\bigg(\frac{\sqrt{2\nu}|\bx|}{\lambda}\bigg)^{\nu-j}K_{\nu-n+j}  \bigg(\frac{\sqrt{2\nu}|\bx|}{\lambda}\bigg)\bigg| \,.
		\end{split}
		\]
		Note that, using $K_\nu = K_{-\nu}$, by the monotonicity of $K_\nu$ in $\nu$ for $\nu\geq 0$ and the series representation of $K_{\nu}$ \cite[Page 80]{Wat}, we can estimate, for all $t \geq 0$, 
		\begin{align} \label{series}
		K_{\nu}(t) \leq K_{\ell-1/2}(t)=\sqrt{\frac{\pi}{2t}} e^{-t} \sum_{k=0}^{\ell-1} \frac{(\ell-1+k)!}{k!(\ell-1-k)!(2t)^k}, \qquad   \ell=\ceil{\abs{\nu} +1/2}. 
		\end{align}
	Now, if    $|\bx| \geq \gamma/2$,  then $t := \sqrt{2 \nu} \vert \bx \vert/\lambda\geq C$,  \eqref{series} implies that $K_\nu(t) \leq C t^{-1/2} e^{-t}$ and so 
		\[
		\begin{aligned} 
		| \partial^{\bsalpha}\sigma(\bx)|
		&
		\leq 
		C  	  \max_{j\leq d}\bigg(\frac{\sqrt{2\nu}|\bx|}{\lambda}\bigg)^{\nu-j-\frac{1}{2}}e^{  -\frac{\sqrt{2\nu}|\bx|}{\lambda}}
		\leq C\bigg(\max_{|\bx|\geq \gamma/2}e^{  -\frac{\sqrt{2\nu}|\bx|}{2\lambda}}\bigg) \bigg(\frac{\sqrt{2\nu}|\bx|}{\lambda}\bigg)^{\nu}e^{  -\frac{\sqrt{2\nu}|\bx|}{2\lambda}} \,
		\\
		&
		\leq 
		C e^{{- \frac{\sqrt{2\nu}}{4 \lambda}\gamma}}  \bigg(\frac{\sqrt{2\nu}|\bx|}{\lambda}\bigg)^{\nu}e^{  -\frac{\sqrt{2\nu}|\bx|}{2\lambda}}\,. \qedhere
		\end{aligned}
		\]
	\end{proof}
	\begin{proof}[Proof of Lemma \ref{derivative}] The assertion is obvious for $d=1$. With $\ell=1$ we have
		$
		B_{2,d}= f''(t)t^{d-1} + (d-1)f'(t)t^{d-2}\,.
		$
		Assuming that \eqref{k-001} is true for $\ell$, we prove the this statement for $\ell+1$. We have
		\[
		\begin{split}
		B_{ 2\ell +1,d}(f,t)
		&
		=\sum_{\alpha =1}^{ 2\ell } a_{ 2\ell ,\alpha } \big[f^{(\alpha )}(t)t^{\alpha - 2\ell }\big]'
		\\
		& = \sum_{\alpha =1}^{ 2\ell } a_{ 2\ell ,\alpha }\big[ f^{(\alpha +1)}(t)t^{\alpha - 2\ell }+ (\alpha - 2\ell )f^{(\alpha )}(t)t^{\alpha - 2\ell -1}\big] 
		\end{split}
		\]
		and
		\begin{equation}\label{bm}
			\begin{split}
				B_{ 2\ell +2,d}(f,t) & = \sum_{\alpha =1}^{ 2\ell } a_{ 2\ell ,\alpha }\big[ f^{(\alpha +1)}(t)t^{d-1+\alpha - 2\ell }+ (\alpha - 2\ell )f^{(\alpha )}(t)t^{d+\alpha - 2\ell -2}\big]' \\
				& = \sum_{\alpha =1}^{ 2\ell } a_{ 2\ell ,\alpha }\Big[ f^{(\alpha +2)}(t)t^{d-1+\alpha - 2\ell }+ \big(d-1+2(\alpha - 2\ell )\big)f^{(\alpha +1)}(t)t^{d-2+\alpha - 2\ell }\\
				&\qquad\qquad\qquad\  + (\alpha - 2\ell )(d-2+\alpha - 2\ell )f^{(\alpha )}(t)t^{d-3+\alpha - 2\ell }\Big]\,.
			\end{split}
		\end{equation}	
		When $d=2$ we obtain $\sum_{\alpha=1}^2 \abs{a_{2,\alpha}} = d = 2$ and
		\[ 
		\sum_{j =1}^{ 2\ell +2} |a_{ 2\ell +2,j }| \leq \sum_{\alpha =1}^{ 2\ell } |a_{ 2\ell ,\alpha }|\big[( 2\ell -\alpha )^2 + |1+ 2(\alpha- 2\ell )| +1\big] \leq 4\ell^2 \sum_{\alpha =1}^{ 2\ell } |a_{ 2\ell ,\alpha }|.
		\]
		This proves \eqref{k-001}. When $d=3$, by the induction assumption we have $$B_{ 2\ell ,3}(f,t)= f^{( 2\ell )}(t)t^{2}+ 2\ell f^{( 2\ell -1)}(t)t.$$ 
		Using \eqref{bm} again we obtain
		\[ 
		\begin{split} 
		B_{ 2\ell +2,3}(f,t) &= f^{(2\ell+2)}(t)t^2 + 2f^{(2\ell+1)}(t)t + 2\ell\big[f^{(2\ell+1)}(t)t \big]
		\\
		& = f^{( 2\ell+2 )}(t)t^{2}+ (2\ell+2) f^{( 2\ell +1)}(t)t\,.
		\end{split}
		\]
		This completes the proof of the lemma.
	\end{proof}

\end{appendix}
\end{document}